\documentclass[12pt]{amsart}
\usepackage[all]{xy}
\usepackage[parfill]{parskip}

\usepackage{verbatim}
\usepackage{color}

\usepackage{amsmath, amscd, graphicx, latexsym, hyperref, times}
\usepackage{esint} 
\usepackage{graphicx}
\usepackage[abs]{overpic}
\usepackage{tikz}
\usepackage{tikz-cd}
\usetikzlibrary{arrows, patterns}
\usepackage{soul}

\oddsidemargin .25in \theoremstyle{plain}

\theoremstyle{plain}      
\newtheorem{theorem}{Theorem}[section]
\newtheorem{theoremx}{Theorem}

\newtheorem{proposition}[theorem]{Proposition}
\newtheorem{lemma}[theorem]{Lemma}
\newtheorem{corollary}[theorem]{Corollary}

\theoremstyle{definition}

\theoremstyle{remark}     
\newtheorem{remark}[theorem]{Remark}

\allowdisplaybreaks[3]

\numberwithin{equation}{section}

\newcommand{\R}{\mathbb{R}}

\newcommand{\dd}{\mathop{}\!\mathrm{d}}
\newcommand{\dx}{\mathop{}\!\mathrm{d}x}
\newcommand{\dy}{\mathop{}\!\mathrm{d}y}
\newcommand{\nablag}{\nabla^g} 
\usepackage{cleveref}
\usepackage{etoolbox}
\crefname{theorem}{Theorem}{Theorems}

\crefname{lemma}{Lemma}{Lemmas}
\AtBeginEnvironment{lemma}{\crefalias{theorem}{lemma}}

\crefname{proposition}{Proposition}{Propositions}
\AtBeginEnvironment{proposition}{\crefalias{theorem}{proposition}}

\crefname{corollary}{Corollary}{Corollaries}
\AtBeginEnvironment{corollary}{\crefalias{theorem}{corollary}}  

\crefname{remark}{Remark}{Remarks}
\AtBeginEnvironment{remark}{\crefalias{theorem}{remark}}

\crefname{conjecture}{Conjecture}{Conjectures}
\AtBeginEnvironment{conjecture}{\crefalias{theorem}{conjecture}}

\crefname{example}{Example}{Examples}
\AtBeginEnvironment{example}{\crefalias{theorem}{example}}

\crefname{definition}{Definition}{Definitions}
\AtBeginEnvironment{definition}{\crefalias{theorem}{definition}}

\crefname{equation}{}{}
\crefname{section}{Section}{Section}
\crefname{figure}{Figure}{Figure}
\usepackage{appendix}
\begin{document}

\title[Dimension of polynomial growth harmonic functions]{Dimension of polynomial growth harmonic functions on locally conformally flat manifolds with nonnegative Ricci curvature}
\author{Xiaohan Cai}
\address{School of Mathematical Sciences, Shanghai Jiao Tong University, Shanghai 200240, China}
\email{xiaohancai@sjtu.edu.cn}
\author{Mijia Lai}
\address{School of Mathematical Sciences, Shanghai Jiao Tong University, Shanghai 200240, China}
\email{laimijia@sjtu.edu.cn}

\thanks{}

\begin{abstract}
Let $\mathcal{H}_d(M)$ denote the space of harmonic functions with polynomial growth of degree at most $d$ on a complete Riemannian manifold $(M,g)$. Yau raised two fundamental questions regarding $\mathcal{H}_d(M)$ on complete manifolds with nonnegative Ricci curvature. The first question is the finite dimensionality of $\mathcal{H}_d(M)$, which was confirmed by Colding and Minicozzi. The second question asks whether a sharp upper bound given by its Euclidean analog  $\operatorname{dim}\mathcal{H}_{d}(\mathbb{R}^n)$ holds. We verify that the second question is true on locally conformally flat manifolds. Indeed, one can precisely determine the value of $\dim \mathcal{H}_d(M)$ case by case. 
\end{abstract}
\maketitle
\section{Introduction}

The study of harmonic functions on complete noncompact Riemannian manifolds with nonnegative Ricci curvature is a central topic connecting geometric analysis, potential theory, and the large-scale geometry of manifolds. The subject originated with Yau’s classical Liouville theorem~\cite{Y1}, which establishes that every positive harmonic function must be constant on a complete manifold with nonnegative Ricci curvature.  The local gradient estimates of Cheng and Yau~\cite{CY} subsequently  provided a fundamental analytic tool, producing Harnack inequalities and quantitative control of positive harmonic functions under lower Ricci curvature bounds. Li and Tam \cite{LT0, LT3} developed a complementary potential-theoretic approach that relates spaces of bounded and positive harmonic functions to the number and geometry of the ends of a manifold. 

Yau's seminal result also opened an interesting direction of research : the study of the space of harmonic functions with at most polynomial growth.  
For every fixed $d>0$, let
\[
\mathcal{H}_d(M):=\{u:\Delta u=0,\ |u(x)|\leq C(1+r(x))^d\}
\]
denote the space of harmonic functions of polynomial growth of at most degree $d$. Yau raised two fundamental questions regarding $\mathcal{H}_d(M)$ when $M$ has nonnegative Ricci curvature: 
\begin{enumerate}
    \item[Q1] {\bf finite dimensionality}: is it true that $\dim \mathcal{H}_d(M)<\infty$?
    \item[Q2] 
{\bf sharp Euclidean comparison}: is it true that $\dim \mathcal{H}_d(M^n)\le \dim \mathcal{H}_d(\mathbb R^n)$?
\end{enumerate}

For brevity, write $h_d(M)=\dim \mathcal{H}_d(M)$. It is a classical result that any harmonic function with polynomial growth on $\mathbb{R}^n$ must be a harmonic polynomial. For any fixed integer $d\ge1$, it is a fun exercise to show that 
\[
h_d(\mathbb R^n)=\binom{n+d-1}{n-1}+
\binom{n+d-2}{n-1}.
\]

Regarding {\bf Q1}, Li--Tam \cite{LT2} first established this for surfaces with finite total curvature. Colding and Minicozzi \cite{CM1} verified the finite dimensionality of $\mathcal{H}_d(M)$ in general dimension. Li~\cite{L1} also gave an independent proof. 
Later, Colding and Minicozzi \cite{CM2} proved a Weyl type bound 
\[
h_d(M^n)\le C(n)d^{n-1}+ o(d^{n-1}),
\]
which is asymptotically sharp in the power of $d$ since 
$h_d(\mathbb{R}^n)\sim \frac{2}{(n-1)!} d^{n-1}$ as $d\to \infty$. 

For {\bf Q2}, the case $d=1$ is known by ~\cite{LT1}. One also has rigidity in view of ~\cite{CCM}. More precisely, let $M^n$ be a complete noncompact manifold with nonnegative Ricci curvature, and suppose that $h_1(M^n)=h_1(\mathbb{R}^n)=n+1$, then $M$ must be isometric to $\mathbb{R}^n$. 

However, for $d>1$, the Euclidean comparison in general is false. Donnelly \cite{D} constructed, in every dimension $n\ge5$, a complete noncompact manifold with $\operatorname{Ric}\ge0$ carrying more than $n+1$ linearly independent harmonic functions of subquadratic growth. Hence, for a suitable $1<d<2$, Donnelly’s example shows
\[
h_d(M^n)>h_d(\mathbb R^n)=n+1.
\]
Nevertheless, the integer-degree Euclidean comparison remains an open question. Under the assumption of maximal volume growth and uniqueness of the tangent cone at infinity, Huang~\cite{H1,H2} established the Euclidean comparison asymptotically with a sharp coefficient, i.e., 
\[
\lim_{d\to \infty} \frac{h_d(M^n)}{d^{n-1}}= \frac{2\beta}{(n-1)!}, 
\]
where $\beta$ is the asymptotic volume ratio of $M$.
Li--Wang \cite{LW} proved a similar result when $M$ has nonnegative sectional curvature. Indeed, the Euclidean comparison is still an open question on non-negatively curved manifolds. There is a recent breakthrough by Lin-Wang-Xu \cite{LWX}, in which they showed that on a three-dimensional manifold with nonnegative sectional curvature and $\beta>0$, for any integer $d\geq 0$, $h_d(M^3)\leq h_d(\mathbb{R}^3)$. Moreover, if the equality holds for some $d\geq 1$, then $M^3$ is isometric to $\mathbb{R}^3$. 

In this paper, we give an affirmative answer to {\bf Q2} on locally conformally flat manifolds:  

\begin{theoremx} \label{thm:A}
Let $(M^n,g) (n\geq 3)$ be a complete noncompact locally conformally flat manifolds with nonnegative Ricci curvature, then for each $d\geq 0$,
\[
h_d(M^n) \leq h_d (\mathbb{R}^n). 
\]
Moreover, if the equality holds for some integer $d\geq 1$, then $M$ is isometric to $\mathbb{R}^n$.    
\end{theoremx}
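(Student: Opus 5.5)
The plan is to use the classification of complete locally conformally flat manifolds with nonnegative Ricci curvature, and then compute or bound $h_d$ separately for each model. By the results of Zhu, Carron--Herzlich and related work on such manifolds (Schoen--Yau developing map theory), a complete noncompact locally conformally flat $(M^n,g)$ with $\operatorname{Ric}\ge 0$ is, up to finite or isometric quotient, one of the following:
\begin{enumerate}
\item[(i)] flat, i.e.\ a quotient of $\mathbb{R}^n$ by a Bieberbach group;
\item[(ii)] isometric to a quotient of $\mathbb{R}\times \mathbb{S}^{n-1}$ with the product metric (or its quotient $\mathbb{R}\times N$, $N$ a spherical space form);
\item[(iii)] globally conformal to $\mathbb{R}^n$, i.e.\ $g=u^{4/(n-2)}g_{\mathbb{R}^n}$ on $\mathbb{R}^n$.
\end{enumerate}
I would first state this classification with references, reducing the theorem to three cases.

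\textbf{Case (ii)} is handled by separation of variables. Harmonic functions of polynomial growth on $\mathbb{R}\times N$ expand in eigenfunctions of $N$. Every nonconstant eigenmode grows or decays exponentially in $t$, so only the $\lambda=0$ mode survives, giving $a+bt$. Hence $h_d\le 2\le h_d(\mathbb{R}^n)$ for $d\ge1$, with strict inequality since $n\ge3$.

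\textbf{Case (i)}: a polynomial-growth harmonic function on $\mathbb{R}^n/\Gamma$ lifts to a $\Gamma$-invariant harmonic polynomial on $\mathbb{R}^n$. Therefore $\mathcal{H}_d(M)$ injects into $\mathcal{H}_d(\mathbb{R}^n)$. Equality for some $d\ge1$ forces all linear functions to be $\Gamma$-invariant, which makes the translational parts trivial, so $\Gamma=\{1\}$.

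\textbf{Case (iii)} is the main obstacle. Here $g=u^{4/(n-2)}\delta$ on $\mathbb{R}^n$, and $g$ is complete with $\operatorname{Ric}\ge0$. The conformal Laplacian relation gives
\[
\Delta_g f=u^{-\frac{n+2}{n-2}}\Big(\Delta_0(uf)-f\Delta_0 u\Big),
\]
so $g$-harmonic functions do not coincide with Euclidean harmonic functions. My approach is to exploit $\operatorname{Ric}\ge 0$ to obtain structural information on $u$. Scalar curvature $\ge0$ means $u$ is Euclidean superharmonic, so $u$ is bounded below by $c|x|^{2-n}$ at infinity. Completeness forces $u\ge c|x|^{2-n}$, which is the borderline case. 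Combined with Ricci nonnegativity through the Schoen--Yau/Zhu analysis, I expect $g$ to have either Euclidean volume growth or to be asymptotic to the cylindrical picture.

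In the maximal volume growth subcase, I would show the tangent cone at infinity is unique and equal to $\mathbb{R}^n$, possibly after ruling out nontrivial cones via conformal flatness: a cone $C(\Sigma)$ that is locally conformally flat with $\operatorname{Ric}\ge0$ forces $\Sigma$ to be a spherical space form, and the simply connected end gives the round sphere. Then I would use a Colding--Minicozzi/Huang type argument via frequency or Almgren monotonicity: harmonic functions of growth $d$ blow down to homogeneous harmonic polynomials of degree $\le d$ on $\mathbb{R}^n$. An injectivity argument on the graded pieces (Huang's ``leading term'' map) then gives $h_d(M)\le h_d(\mathbb{R}^n)$.

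In the non-maximal volume growth subcase, I expect to show via $\operatorname{Ric}\ge0$ plus conformal flatness that $M$ has linear-type volume growth or splits off. Then a Poincaré/three-circle argument bounds $h_d$ by a small constant.

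\textbf{Rigidity}: equality for $d\ge1$ excludes case (ii) and nontrivial quotients. In case (iii), equality at level $d$ forces in particular $h_1=n+1$, because the graded injection must be surjective in degree $1$. Cheeger--Colding--Minicozzi then gives that $M$ is isometric to $\mathbb{R}^n$. The hardest step is certainly case (iii): establishing the uniqueness of the Euclidean tangent cone and making the graded injection work for all degrees, not just asymptotically.
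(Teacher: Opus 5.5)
\textbf{Cases (i) and (ii).} These are sound. In the flat case, your argument is in fact a shorter route than the paper's soul/Bieberbach computation, which is only needed if one wants an exact formula for $h_d(M)$. You use injectivity of $u\mapsto u\circ\pi$ into $\mathcal H_d(\mathbb R^n)$, and note that equality forces every linear function to be $\Gamma$-invariant, hence $\Gamma=\{1\}$.

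\textbf{The gap in case (iii).} This is the heart of the theorem, and your plan there cannot succeed, because the tangent cone at infinity is never $\mathbb R^n$ in the non-flat case. For non-flat $g=e^{2w}g_0$ with $\beta>0$, Bishop--Gromov rigidity gives $\beta<1$. Every tangent cone at infinity has volume ratio $\beta$ (Cheeger--Colding volume convergence), so none is Euclidean.

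Your mechanism for excluding non-Euclidean cones also fails. Consider the cone $\mathcal C_a$ over the round sphere of radius $a<1$. It has $\operatorname{Ric}\ge0$, and it is locally conformally flat: with $\rho=a^{-1}|y|^a$ one has $d\rho^2+a^2\rho^2g_{\mathbb S^{n-1}}=|y|^{-2m}g_0$, where $m=1-a$. Its cross-section is a round sphere, just not the unit one. This is exactly the blow-down limit in the paper, with $a=1-m=\beta^{1/(n-1)}$ by Ma's formula. So your Huang-type step and your rigidity argument both rest on a false premise. The latter (equality forcing $h_1=n+1$ via a degree-one graded injection, then Cheeger--Colding--Minicozzi) also relies on a graded injection you never construct.

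\textbf{The missing idea.} You need to blow down to $\mathcal C_a$ and use its spectrum. $L_\infty$-harmonic functions bounded at the vertex are sums of $r^{\sigma_\ell}Y_\ell$, where $\sigma_\ell(\sigma_\ell+a(n-2))=\ell(\ell+n-2)$. Since the intrinsic distance satisfies $\rho\sim r^a$, the degree-$\ell$ mode has intrinsic growth $\sigma_\ell/a>\ell$ for $\ell\ge1$ when $a<1$. This yields the strict drop $h_d(M)\le h_{d-1}(\mathbb R^n)<h_d(\mathbb R^n)$ for integers $d\ge1$, which makes rigidity automatic in this class.

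Proving this for every $d$, not just asymptotically, requires three ingredients absent from your plan:
\begin{enumerate}
\item the distance comparison $\ln r(x)/\ln|x|\to a$, to convert intrinsic growth into Euclidean growth;
\item $L^1_{\mathrm{loc}}$ convergence of the rescaled weights $P_R\to|y|^{-m(n-2)}$ and $Q_R\to|y|^{-mn}$, derived from Ma's asymptotic radial symmetry, so that $\operatorname{div}(P_R\nabla u)=0$ passes to the limit despite the exceptional $\mathcal E$-set and the loss of uniform ellipticity;
\item a doubling lemma together with simultaneous Gram--Schmidt blow-down along dyadic radii, so that linear independence survives the limit.
\end{enumerate}

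\textbf{The $\beta=0$ subcase.} Your treatment here is also unsupported. Such metrics need not have linear volume growth: a rotationally symmetric concave warping $a(\rho)\sim\rho^{1/2}$ gives growth $\rho^{(n+1)/2}$. A non-flat metric conformal to $\mathbb R^n$ also cannot split off a line. The paper instead uses Ma's inclusion $B^{\underline g}_R\subset B^g_R\subset B^{\underline g}_{R+o(R)}$ to obtain $\operatorname{diam}_g(\partial B_R)=o(R)$. The Kasue--Carron Liouville theorem then gives $h_d=1$.
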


Let  $(M^n,g)$ be a complete noncompact manifold with nonnegative Ricci curvature. Denote by $\beta$ the asymptotic volume ratio of $(M,g)$, i.e., \[
    \beta=\lim_{r\to \infty}\frac{\operatorname{vol}(B(p,r))}{\omega_nr^n}.
    \]
The above limit is independent of the choice of the base point $p\in M$. When $\beta>0$, $M$ is said to have Euclidean volume growth.

Locally conformally flat manifolds with nonnegative Ricci curvature have been completely classified by Zhu \cite{Z} and Carron--Herzlich \cite{CH}. According to this classification, one essential case to study is when $(M^n, g)$ takes the form $(\mathbb{R}^n, g=e^{2w}g_0)$, for which Ma and Qing \cite{MQ} developed a fundamental theory on the asymptotic exponent of the conformal factor $w$.

We now explain in some details of Ma-Qing's theory. There exists $m\in [0,1]$ such that 
\[
    \liminf_{x\to \infty} \frac{w(x)}{\ln |x|}=-m.
\]
Moreover, there exists an exceptional set $E$, which is $n$-thin at infinity (in the sense of $n$-potential theory) such that 
\[
\lim_{x\to \infty, x\notin E} \frac{w(x)}{\ln |x|}=-m.
\]

In a recent breakthrough~\cite{M}, Ma refined the above estimate to the following: setting $\underline{w}(r)=\inf_{|x|=r} w(x)$, there exists a strong  $\mathcal{E}$-set $E$, such that 
\begin{equation} \label{eq:asympradial}
w(x)=\underline{w}(|x|)+o(1), \quad x\to \infty \quad \text{and} \quad x\notin E.
\end{equation}
Using this refined radial symmetry of $w$, Ma \cite{M} showed that 
\begin{equation}\label{eq. volume-ratio formula}
    \beta^{\frac{1}{n-1}}=1-m.
\end{equation}

Furthermore, Ma computed the exact values of the scale-invariant integral of scalar curvature, $\lim_{r\to \infty} \frac{1}{r^{n-2}}\int_{B_r} R_g \dd V_g$, which affirmatively answers a question of Yau on the finiteness of scalar curvature for complete manifolds with nonnegative Ricci curvature in the locally conformally flat setting.

Building on this asymptotic analysis of the conformal factor, we establish our main result by treating the two cases $\beta=0$ and $\beta>0$ separately.

\begin{theoremx} \label{thm: T1}
    Let $g=e^{2w}g_{0}$ be a smooth and complete metric on $\mathbb{R}^n$ with nonnegative Ricci curvature, suppose $\beta=0$, then for every finite $d\ge0$,
\[
 \mathcal{H}_d(\R^n,g)=\R.
\]
\end{theoremx}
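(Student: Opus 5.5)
The plan is to exploit that $\beta=0$ forces $m=1$ by \eqref{eq. volume-ratio formula}. Then Ma's refined estimate \eqref{eq:asympradial} says that, off a strong $\mathcal{E}$-set $E$, $g=e^{2w}g_0$ is, up to a bounded multiplicative factor $e^{o(1)}$, the radial metric $e^{2\underline{w}(|x|)}g_0$ with $\underline{w}(r)=-\ln r+o(\ln r)$. In the logarithmic coordinate $t=\ln|x|$, $\theta=x/|x|$, this metric reads $e^{2(\underline w+t)}(dt^2+d\theta^2)$ with $\underline w+t=o(t)$. So the end of $(\R^n,g)$ is a half-cylinder over a sphere whose radius $\rho(t)=e^{\underline w(e^t)+t}$ varies subexponentially. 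The model picture is that polynomial-growth harmonic functions on such a half-cylinder are asymptotically $a+b\,t$ plus exponentially decaying angular modes. Completeness of $g$ across the origin then kills the $b\,t$ term by a flux argument, leaving $u\equiv a$.

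\textbf{Step 1 (distance).} Compare the $g$-distance $r(x)$ with $\int_1^{|x|}e^{\underline w(s)}\,ds$, using \eqref{eq:asympradial} off $E$ and the thinness of $E$ to connect points through the complement. The goal is to show that $r(x)$ grows at most like a fixed function $\phi(\ln|x|)$. Then $|u|\le C(1+r)^d$ translates into a bound $|u|\le C\,\Phi(t)$ on the cross-section $\{t\}\times S^{n-1}$ that is subexponential in $t$.

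\textbf{Step 2 (uniform estimates on annuli).} Consider the annuli $A_k=\{e^{k}<|x|<e^{k+1}\}$ with the rescaled metric $\rho(k)^{-2}g$. By Step 1 and $\operatorname{Ric}\ge0$, these are uniformly comparable to a piece of the round cylinder. Cheng--Yau gradient estimates and Harnack inequalities, which require only $\operatorname{Ric}\ge0$, then apply with uniform constants. I would handle the set $E$ by using that a strong $\mathcal{E}$-set meets $A_k$ in a set of vanishing relative capacity as $k\to\infty$, so that it does not affect $L^2$ or Harnack estimates on the $A_k$.

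\textbf{Step 3 (angular modes).} Write $\Delta_g u=e^{-2w}\bigl(\Delta_0u+(n-2)\nabla w\cdot\nabla u\bigr)$ and decompose $u$ on each sphere $|x|=e^t$ into the spherical mean $\bar u(t)$ and the oscillating part $u-\bar u$. For the oscillating part, use a three-annulus (frequency) argument on the $A_k$. Since the first nonzero eigenvalue of the cross-section is bounded below after rescaling, the $L^2$ norm of $u-\bar u$ on $A_k$ either grows or decays exponentially in $k$. Growth contradicts the subexponential bound from Step 1, so the oscillating part decays exponentially as $t\to\infty$, and $u-\bar u\to0$.

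\textbf{Step 4 (flux).} Since $u$ is $g$-harmonic on all of $\R^n$, the divergence theorem gives
\[
\int_{|x|=e^t}\partial_\nu u\,d\sigma_g=0
\]
for every $t$. Here $\partial_\nu u\,d\sigma_g=e^{(n-2)w}\partial_r u\,r^{n-1}d\theta$. Combining this with the decay from Step 3 yields an ODE for $\bar u$ whose solutions are constant plus a multiple of $\int e^{(2-n)(\underline w+t)}\,dt$. The vanishing flux forces that multiple to be zero. Hence $u$ tends to a constant $a$ at infinity, and by the maximum principle on large balls, $u\equiv a$.

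The main obstacle is Step 2: controlling the nonradial part of $w$ on the exceptional set $E$ uniformly enough to run the three-annulus argument. The pointwise estimate \eqref{eq:asympradial} fails on $E$, so I would first try to obtain integral ($L^1$ or capacity) control of $w-\underline w$ on each $A_k$, and then rely on the Ricci lower bound to propagate interior estimates across $E$.
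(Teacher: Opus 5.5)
There is a genuine gap. It sits where you suspect, but it is structural rather than technical.

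\textbf{Step 2 is false as stated.} You claim that $(A_k,\rho(k)^{-2}g)$ is uniformly comparable to a piece of the round cylinder. In the coordinates $(t,\theta)$ the rescaled metric is $e^{2(w-\underline w)+o(1)}(dt^2+d\theta^2)$. Already this $o(1)$ needs $r\underline w'(r)\to-1$, not just $\underline w(r)=-\ln r+o(\ln r)$. The only pointwise information on $w-\underline w\ge 0$ is that it is $o(1)$ off $E$; no available estimate bounds it on $E\cap A_k$.

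\textbf{Consequences for Steps 3 and 4.} The equation $\partial_t\bigl(e^{(n-2)(w+t)}\partial_t u\bigr)+\operatorname{div}_\theta\bigl(e^{(n-2)(w+t)}\nabla_\theta u\bigr)=0$ has a non-radial coefficient that may be unbounded on $E$. So the spherical modes do not decouple. The function $u-\bar u$ solves no equation to which the spectral gap of $\mathbb S^{n-1}$ or a three-annulus lemma applies. In Step~4 the zero-flux identity is $\int_{\mathbb S^{n-1}}e^{(n-2)(w-\underline w)}\partial_t u\,d\theta=0$. Hence $\partial_t\bar u=-|\mathbb S^{n-1}|^{-1}\int_{\mathbb S^{n-1}}\bigl(e^{(n-2)(w-\underline w)}-1\bigr)\partial_t u\,d\theta$, which is an error term living where $w-\underline w$ is not small, not an ODE.

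\textbf{The capacity fix does not work.} Small capacity of $E\cap A_k$ is the wrong kind of smallness. It gives no control of $\int_{E\cap A_k}e^{(n-2)(w-\underline w)}|\nabla u|^2$ or $\int_{E\cap A_k}e^{n(w-\underline w)}u^2$. A thin set with very large conductivity can carry a definite amount of flux.

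\textbf{What would actually be needed.} The available input is integral control of the weights, via Ma's spherical exponential averages. Exploiting it requires a compactness version of the three-annulus lemma as in \cref{sec:weighted-G}, together with uniform $L^\infty$ bounds that stop the $L^2$ mass from concentrating on $E$. There those bounds come from the mean value inequality and $\operatorname{vol}(B(p,\rho_0))\ge\beta|\mathbb B^n|\rho_0^n$. With $\beta=0$ you would need non-collapsing of $g$-balls at the cross-sectional scale $a(R)$, which Bishop--Gromov does not give. Ma's inclusion \cref{eq:inclusion} controls distances only up to an error $o(R)$, which may be much larger than $a(R)=o(R)$. The same objection applies to your use of Cheng--Yau and Harnack: these live on $g$-balls whose relation to cylinder regions at scale $a(R)$ is unknown. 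As written, Steps~3--4 are a proof only for radial $w$, which is \cref{thm:radial}.

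\textbf{How the paper avoids this.} The paper never works at the cross-sectional scale. Your observation that the end is a half-cylinder with slowly varying radius is, intrinsically, the statement $a(\rho)/\rho\to0$ for the radial comparison metric $\underline g=e^{2\underline w}g_0=d\rho^2+a(\rho)^2g_{\mathbb S^{n-1}}$. The paper derives this from $\beta=0$, $w\ge\underline w$ and \cref{eq:inclusion}. It gives $\operatorname{diam}(\partial B^{\underline g}_R)\le\pi a(R)=o(R)$, and the inclusion \cref{eq:inclusion} transfers this to $\operatorname{diam}_g(\partial B^g_R)=o(R)$. Carron's Liouville theorem (\cref{prop:diam}) then finishes, with no mode decomposition and no flux argument. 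All the trouble caused by $E$ is absorbed into an $o(R)$ error, which is exactly the precision Ma's estimates deliver. Your route would need control of $E$ at the much finer scale $a(R)$.
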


{For $\beta>0$, in view of (\ref{eq. volume-ratio formula}), we have $m<1$. Set $a=1-m\in (0,1]$ and define 
\[
\mathcal{C}_a=
    \bigl([0,\infty)\times\mathbb S^{n-1},
    d\rho^2+a^2\rho^2g_{\mathbb S^{n-1}}\bigr).
\]}
\begin{theoremx} \label{thm:T2}
        Let $g=e^{2w}g_{0}$ be a smooth and complete metric on $\mathbb{R}^n$ with nonnegative Ricci curvature, suppose $\beta\in (0,1)$ , then for every integer $d\ge1$,
\[
 \dim \mathcal{H}_d(\R^n,g)=\dim \mathcal{H}_d(\mathcal{C}_a) \leq \dim \mathcal{H}_{d-1}(\mathbb{R}^n, g_0)<\dim \mathcal{H}_{d}(\mathbb{R}^n, g_0).
\]
\end{theoremx}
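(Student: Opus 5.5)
The plan has two parts. First, compute $\dim\mathcal{H}_d(\mathcal{C}_a)$ explicitly by separation of variables, which gives the two inequalities on the right. Second, identify $\dim\mathcal{H}_d(\mathbb{R}^n,g)$ with that number by showing that $\mathcal{C}_a$ is the unique tangent cone at infinity of $(\mathbb{R}^n,g)$, then running a three-circle/frequency argument for the upper bound and a construction by exhaustion for the lower bound.

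\emph{The cone.} On $\mathcal{C}_a$ we have $\Delta=\partial_\rho^2+\frac{n-1}{\rho}\partial_\rho+\frac{1}{a^2\rho^2}\Delta_{\mathbb S^{n-1}}$. Every harmonic function of polynomial growth expands in spherical harmonics $\phi_k$ of degree $k$, with $\Delta_{\mathbb S^{n-1}}\phi_k=-k(k+n-2)\phi_k$. Regularity at the vertex forces each mode to be $\rho^{\alpha_k}\phi_k$, where $\alpha_k\ge0$ solves
\[
\alpha_k(\alpha_k+n-2)=\frac{k(k+n-2)}{a^2}.
\]
Hence $\dim\mathcal{H}_d(\mathcal{C}_a)=\sum_{k:\,\alpha_k\le d}N(n,k)$, where $N(n,k)$ is the dimension of degree-$k$ spherical harmonics. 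Since $\beta\in(0,1)$, by \eqref{eq. volume-ratio formula} $a=\beta^{1/(n-1)}<1$. So for $k\ge1$ we get $\alpha_k(\alpha_k+n-2)>k(k+n-2)$, that is, $\alpha_k>k$. For an integer $d$, $\alpha_k\le d$ therefore forces $k\le d-1$. The sum is then at most $\sum_{k\le d-1}N(n,k)=\dim\mathcal{H}_{d-1}(\mathbb{R}^n,g_0)$. The last inequality is strict because $N(n,d)>0$.

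\emph{Tangent cone at infinity.} Next I show that every blow-down $(\mathbb{R}^n,r_i^{-2}g,0)$ converges to $\mathcal{C}_a$. Write $g=e^{2w}g_0$ in polar coordinates. Ma's refined asymptotics \eqref{eq:asympradial} say that $w=\underline w(|x|)+o(1)$ off a strong $\mathcal{E}$-set $E$; this thinness should make $E$ negligible in the Gromov--Hausdorff sense after rescaling. Combined with $\liminf w/\ln|x|=-m$, the radial profile gives the rescaled metric $d\rho^2+(1-m)^2\rho^2 g_{\mathbb S^{n-1}}$, where $\rho\sim\int e^{\underline w}$. By \eqref{eq. volume-ratio formula} the volume ratio matches, so by volume-cone-implies-metric-cone (Cheeger--Colding), together with the spherical cross-section coming from the radial symmetry, the limit is $\mathcal{C}_a$ independently of the subsequence.

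\emph{Upper bound.} With Euclidean volume growth and a unique smooth tangent cone $\mathcal{C}_a$, I apply the frequency/three-circle method of Ding and Huang to get $\dim\mathcal{H}_d(M)\le\dim\mathcal{H}_{d'}(\mathcal{C}_a)$ for $d'$ slightly larger than $d$. The growth rates $\alpha_k$ are discrete, so $d'$ can be taken with $\mathcal{H}_{d'}(\mathcal{C}_a)=\mathcal{H}_d(\mathcal{C}_a)$.

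\emph{Lower bound.} For each $\rho^{\alpha_k}\phi_k$ with $\alpha_k\le d$, I solve Dirichlet problems on $B_R$ with boundary data given by the pulled-back homogeneous function and let $R\to\infty$. For this step I would use the writing $\Delta_g u=e^{-2w}\bigl(\Delta_0u+(n-2)\nabla w\cdot\nabla u\bigr)$ together with Ma's decay of $w-\underline w$ to show that the error is summable over dyadic annuli. The resulting harmonic functions have growth at most $d$ and remain linearly independent.

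\emph{Main obstacle.} I expect the hardest step to be the lower bound, especially at a critical degree $d=\alpha_k$. There the construction needs a quantitative convergence rate to the cone, not just uniqueness of the tangent cone, in order to prevent a logarithmic loss in growth. I would first try to extract a Dini-type rate from Ma's estimate for the radial part of $w$, handling the exceptional set $E$ by its thinness in $n$-capacity.
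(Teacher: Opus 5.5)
Your computation on the cone is correct and matches the paper's spectral lemma. With $\rho=|y|^a/a$ your exponents satisfy $\sigma_\ell=a\alpha_\ell$, and $\alpha_k>k$ for $a<1$ gives the two right-hand inequalities. The gaps are in the other two parts.

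\textbf{The equality $\dim\mathcal{H}_d(\R^n,g)=\dim\mathcal{H}_d(\mathcal{C}_a)$ rests on an unproved claim.} You assume $\mathcal{C}_a$ is the unique tangent cone at infinity, and the argument you sketch does not establish this.
\begin{itemize}
\item Volume cone implies metric cone only says each tangent cone is some $C(X)$ with $\mathcal{H}^{n-1}(X)=a^{n-1}|\mathbb{S}^{n-1}|$. That volume does not determine $X$.
\item There is no radial symmetry to invoke. The metric is asymptotically radial only off the strong $\mathcal{E}$-set $E$.
\item You give no argument that $E$ is negligible for distances at scale $R$. Since $w\ge\underline w$, $E$ cannot create shortcuts, but a priori it can create obstacles. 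You would need $d_g\le d_{\underline g}+o(R)$ for all pairs of points, not just from the origin.
\end{itemize}
The paper states that this Gromov--Hausdorff statement is ``probably difficult'' and never uses it. Instead it blows down the \emph{equation} in Euclidean coordinates: $u_R(y)=h(Ry)/S_h(R)^{1/2}$ solves $\operatorname{div}(P_R\nabla u_R)=0$. All it needs is the following.
\begin{itemize}
\item $P_R\to|y|^{-m(n-2)}$ and $Q_R\to|y|^{-mn}$ in $L^1_{\mathrm{loc}}$, which follow from Ma's estimates.
\item The lower bound $e^{w(Ry)-\underline w(R)}\ge c_\Lambda$.
\item The mean value inequality on the rescaled metrics, using $\beta>0$, for uniform $L^\infty$ bounds, plus a capacity argument at the vertex.
\end{itemize}
The bridge from intrinsic growth to these Euclidean scalings is the distance comparison $\ln r(x)/\ln|x|\to a$ of Section~3, which your plan does not provide. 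The three-circle lemma is then proved directly from log-convexity of $t\mapsto\log\operatorname{Av}_{\infty,e^t}(v^2)$ on the cone. So no Ding--Huang result is needed, and such a result would require exactly the unique-tangent-cone hypothesis you have not secured.

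\textbf{The lower bound needs a rate that is not available, and linear independence is not addressed.} Your perturbative construction requires a summable rate of convergence to the cone. Ma's results give only $o(1)$ off $E$ and $r\underline w'(r)\to-m$, with no rate. You also say nothing about how linear independence is secured. The paper's construction is compactness-based and rate-free:
\begin{itemize}
\item On $\partial B_{R_j}$ it prescribes data $\sum_{i\le k+1}a_{i,j}R_j^{\mu_i}Y_i$.
\item The coefficients are normalized so that $v_j$ has unit mass and is orthogonal to $h_1,\dots,h_k$ on the \emph{fixed} ball $B_{R_0}$. This is what gives independence.
\item A trace-convergence lemma keeps the rescaled coefficients bounded.
\item The blow-down limit has frequency at most $\mu_D<N$, so $S_{v_j}(R_j)\le 2^{2N}S_{v_j}(R_j/2)$ for large $j$.
\item The three-circle lemma propagates this down to $R_0$, giving $S_{v_j}(2^sR_0)\le 2^{2Ns}$ uniformly, hence a nonzero limit.
\end{itemize}
This controls growth only at a level $N>\sigma_{\bar d-1}$. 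For non-critical $d$ one can take $N<ad$ and land in $\mathcal{H}_d$ directly. At the critical value $ad=\sigma_{\bar d-1}$, the paper only asserts that membership in $\mathcal{H}_d$ follows by going through the upper-bound argument. So your concern about the critical degree is legitimate. However, your proposed fix, a Dini rate extracted from Ma's estimates, is not supported by anything in that theory.
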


In spirit, the study in this paper can be regarded as an extension of the Li--Tam's result~\cite{LT2} to higher dimension. As a precursor for the main result, we first need to  establish a comparison between  the distance induced by the conformal metric $g=e^{2w}g_0$ and the underlying Euclidean distance.

\begin{theorem} \label{thm:distance}
Let $g=e^{2w} g_0$ be a conformal metric on $\mathbb{R}^n$ with nonnegative Ricci curvature and $\beta>0$. Let $r(x)$ denote the distance function to the origin induced by $g$, then
\begin{equation} \label{distancecompare}
\lim_{x\to \infty} \frac{\ln r(x)}{\ln |x|}=1-m. 
 \end{equation} 
\end{theorem}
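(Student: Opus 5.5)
We prove the two inequalities $\limsup \le 1-m$ and $\liminf \ge 1-m$ separately. Throughout we use Ma's refined radial asymptotics \eqref{eq:asympradial}, the volume formula \eqref{eq. volume-ratio formula} with $a=1-m>0$, and the basic Ma--Qing fact $\liminf_{x\to\infty} w(x)/\ln|x| = -m$.

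\emph{Upper bound.} Since $E$ is a strong $\mathcal{E}$-set, it is thin at infinity. So for all large dyadic radii $R_k=2^k$, most of the sphere $\{|x|=R_k\}$ and most of the annulus $R_k\le |x|\le R_{k+1}$ avoid $E$, except for a set of small capacity. On the complement of $E$ we have $w(x)=\underline{w}(|x|)+o(1)$, and $\underline w(r)\le (-m+\epsilon)\ln r$ along a sequence of radii. To get a bound valid for every large radius, we combine this with superharmonicity-type monotonicity. Indeed $\mathrm{Ric}\ge 0$ gives $\Delta w\le 0$-type control; more precisely $R_g\ge 0$ yields $-\Delta_0 w\ge \frac{n-2}{2}|\nabla w|^2\ge 0$. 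Hence the spherical minimum $\underline w(r)$ is controlled by $\ln r$ linearly, and in fact $\underline w(r)=(-m+o(1))\ln r$.

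Given $x$ with $|x|=\rho$, we build a path from $0$ to $x$. It runs radially along rays that avoid $E$ (the thinness of $E$ guarantees a positive-measure set of such directions in each dyadic shell). In each shell we then connect across spheres by short detours of length $O(R_k)$ that also avoid $E$. The $g$-length of the shell piece is $\lesssim R_k e^{\underline w(R_k)+o(1)} = R_k^{1-m+o(1)}$. Summing over $k$ up to $\log_2\rho$ gives $r(x)\le \rho^{1-m+o(1)}$. The last step needs care: $x$ itself may lie in $E$. For that final segment we use local gradient estimates. Cheng--Yau type estimates for conformal factors with $\mathrm{Ric}\ge0$ give $|\nabla w|\le C/|x|$ on most of the shell, or else we use thinness to argue that a small ball around $x$ contains a good point at $g$-distance $o(\rho^{1-m})$.

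\emph{Lower bound.} For the reverse inequality we use volume. By Bishop--Gromov and $\beta>0$, $\operatorname{vol}_g B_g(0,r)\approx \beta\omega_n r^n$. If some $x$ far out had $r(x)\le |x|^{1-m-\delta}$ along a sequence, we would use the Euclidean picture to compute the volume. The $g$-volume of the Euclidean ball $\{|y|\le\rho\}$ is $\int e^{nw}\,dy$, which on the complement of $E$ equals $\int_0^\rho e^{n\underline w(s)+o(1)}s^{n-1}\,ds\cdot|\mathbb S^{n-1}| = \rho^{n(1-m)+o(1)}$; the contribution of $E$ must be shown negligible. The upper bound already gives $\{|y|\le\rho\}\subset B_g(0,\rho^{1-m+o(1)})$, and we also need the opposite containment. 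For that we use lengths of curves: any curve from $0$ to $\{|y|=\rho\}$ crosses each shell, and its $g$-length there is at least $\int e^{w}|dy|$. Because $E$ is thin, a curve could try to hide inside $E$, where $w$ might be large negative. The key is therefore a lower bound $w\ge \underline w(|x|)$, which holds by definition of $\underline w$ everywhere, not only off $E$. So crossing the shell $[R_k,R_{k+1}]$ costs at least $R_k e^{\underline w(R_k)}\cdot c$ (using the oscillation of $\underline w$ on a shell being $O(1)$, by Harnack-type control). Summing gives $r(x)\ge |x|^{1-m-o(1)}$.

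\emph{Main obstacle.} Controlling $\underline w$ precisely is the crux. We need $\underline w(r)=(-m+o(1))\ln r$ for all $r$, not merely along a liminf sequence, together with bounded oscillation of $\underline w$ across dyadic shells. We expect this to follow from superharmonicity of $w$ (so $\underline w$ is concave in $\ln r$ in the sense of the spherical minimum principle) combined with \eqref{eq. volume-ratio formula}. If the concavity route fails, we would instead fall back on the volume formula directly: compare $\operatorname{vol}_g$ of Euclidean shells with $\beta\omega_n r^n$ to pin down $\underline w$ on average.
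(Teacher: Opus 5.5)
Your lower bound $r(x)\ge|x|^{1-m-o(1)}$ is correct. It is also more elementary than the paper's Proposition~\ref{proposition. second part}, which goes through $g$-volumes of Euclidean balls, Bishop--Gromov and the Harnack inequality of Proposition~\ref{proposition. Harnack ineq.}. By the definition of $m$, $w(y)\ge(-m-\epsilon)\ln|y|$ for $|y|$ large, so every curve from $0$ to $x$ has $g$-length at least $\tfrac12|x|^{1-m-\epsilon}$ inside $\{|x|/2\le|y|\le|x|\}$; no oscillation control on $\underline w$ is needed.

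The genuine gap is in the upper bound $r(x)\le|x|^{1-m+o(1)}$, exactly at the step you flag, and it is the central difficulty of the theorem: there is no pointwise upper bound for $w$ on $E$, and neither of your fixes supplies one. A bound $|\nabla w|\le C/|x|$ ``on most of the shell'' says nothing at a bad point $x$. Such a bound at every point would give $w=\underline w+O(1)$ everywhere, which is precisely what the exceptional set in \eqref{eq:asympradial} prevents one from asserting. Thinness only produces a good point $y$ that is \emph{Euclidean}-close to $x$. Bounding $d_g(x,y)$ requires an upper bound for $e^{w}$ along some curve from $x$ to $y$, which is exactly what is unknown near $E$. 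So nothing in your plan transfers control from good points to all points at the same scale.

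In the paper that transfer is the refined Harnack inequality $s(r)\le i(r)e^{o(1)}$ of Proposition~\ref{proposition. Harnack ineq.} for the $g$-$n$-harmonic function $\ln|x|$. It rests on the finite, conformally invariant $n$-energy of $\ln\ln|x|$, Moser iteration, and connectivity of annuli when $\beta>0$. The paper then bounds $s(r)$ from below using the Wolff-potential estimate of Lemma~\ref{lemma. integral of exp(n phi)} and $\operatorname{vol}_gB^g(0,r)\ge cr^n$.

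With the Harnack inequality, your approach closes with far less than you attempt: one ray $\{t\theta:t\ge T_0\}$ avoiding $E$ suffices. Producing that ray is what thinness of $E$ should be used for, e.g.\ via summability over dyadic shells of the measure of directions meeting $E$. Detours along spheres are then unnecessary, and small capacity would not justify them anyway, since it controls thick families such as radial segments, not one-parameter families of arcs. Along that ray $r(t\theta)\le t^{1-m+\epsilon}$ for large $t$, so by continuity every large sphere $\partial B^g(0,r)$ contains a point with $\ln|x|\ge\ln r/(1-m+\epsilon)$. Thus $s(r)\ge\ln r/(1-m+\epsilon)$, and $i(r)\ge s(r)e^{-o(1)}$ gives the bound at every point of $\partial B^g(0,r)$.

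Alternatively, use noncollapsing $\operatorname{vol}_gB^g(x,t)\ge\beta\omega_nt^n$ at scale $t=\delta\rho e^{\underline w(\rho)}$. Ma's annular estimate \eqref{eq:Ma-annular-volume} makes the $g$-volume of the non-good part of $\{\rho/4\le|y|\le4\rho\}$ small compared with $t^n$, which forces $B^g(x,t)$ to contain a good point.

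Finally, the bound $r(t\theta)\le t^{1-m+\epsilon}$ needs $\underline w(r)\le(-m+o(1))\ln r$ for \emph{all} large $r$, not just along a sequence. Concavity of $\underline w$ in $\ln r$ does not follow from $-\Delta w\ge\frac{n-2}{2}|\nabla w|^2$, since ordinary superharmonicity only gives concavity in $r^{2-n}$. It does follow from the $n$-superharmonicity $-\Delta_nw\ge0$ implied by $\operatorname{Ric}_g\ge0$, or directly from Ma's $r\underline w'(r)\to-m$.
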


The proof of Theorem \ref{thm:distance} relies on Ma-Qing's theory. With new advances by Ma, the relation (\ref{distancecompare}) even holds when $\beta=0$ and can be implicitly deduced from Ma's argument. 

{Distance comparison enables us to translate geometric and function-theoretic information into Euclidean terms. Then the asymptotic analysis \cref{eq:asympradial} helps to establish a bijection, through a blow-down procedure,
between $\mathcal{H}_d(\mathbb{R}^n, g)$ and $\mathcal{H}_{d}(\mathcal{C}_a)$.
The difficulty arises from the presence of a strong $\mathcal{E}$-set $E$ in \cref{eq:asympradial} and the lack of uniform ellipticity for the limiting harmonic equations, which prevent convergence in the standard $G$-convergence framework \cite{ZKON}. To circumvent this, we devise a weighted $G$-convergence scheme and then carry through the blow-down argument and the dimension estimates.}

The organization of the paper is as follows. In Section 2, we prove Theorem \ref{thm: T1}. In Section 3, we provide a proof for \cref{thm:distance} even though this result is now subsumed by ~\cite{M}. The proof does not need to appeal to Ma's new theory and has its own independent interest. In Section 4, we discuss the blow-down scheme and weighted $G$-convergence. In Section 5, we prove Theorem \ref{thm:T2}. In the last section, dealing with remaining cases, we conclude the proof of Theorem \ref{thm:A}.

{\bf Declaration on the use of AI:}
The authors  obtained \cref{thm:distance} and then developed the overall strategy for the proofs of Theorem~\ref{thm:T2} 
---
the main body of Theorem~\ref{thm:A}
---
prior to any involvement of AI in the proofs.
AI was subsequently used to assist in generating certain technical details.
All arguments were written by the authors.
We take full responsibility  for the content of this manuscript.

\section{The case with vanishing asymptotic volume ratio}

Kasue and Carron proved Liouville theorems under sublinear diameter growth of geodesic spheres~\cite{K1,K2,C}. Carron's  formulation \cite[Theorem 1.1]{C} is particularly convenient here.
\begin{proposition} \label{prop:diam}
Let $(M^n,g)$ be complete with $\operatorname{Ric}_g\ge0$.  Assume that, for some $p\in M$,
\begin{equation}\notag
 \operatorname{diam}_g\bigl(\partial B_R^g(p)\bigr)=o(R).
\end{equation}
Then every harmonic function of polynomial growth of finite degree in $M$ must be constant.
\end{proposition}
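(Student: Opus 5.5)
Let $u$ be harmonic on $(M,g)$ with $|u(x)|\le C(1+r(x))^d$, where $r$ is the distance to $p$. The plan is to show that the oscillation of $u$ on large geodesic spheres is negligible compared with their radius. A maximum-principle and Harnack argument then forces $u$ to be constant. The basic tool is the Cheng--Yau gradient estimate $|\nabla u|(x)\le \frac{C(n)}{R}\sup_{B(x,R)}|u-c|$. We use it together with the hypothesis $\operatorname{diam}_g(\partial B_R(p))=\delta(R)R$, where $\delta(R)\to 0$.

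\textbf{Step 1 (oscillation on spheres).} For $x\in\partial B_R(p)$, apply the gradient estimate on $B(x,R/2)$ with the crude bound $|u|\le C R^d$. Joining any two points of $\partial B_R(p)$ by a path of length comparable to $\operatorname{diam}(\partial B_R)$ is problematic, since the path may leave the annulus. Instead, use the intrinsic reformulation: any two points of $\partial B_R$ lie at distance at most $\delta(R) R$. Connect them by a minimal geodesic. It stays in $B(p,R+\delta R)\setminus B(p,R-\delta R)$, and there $|\nabla u|\lesssim R^{d-1}$. This gives
\[
\operatorname{osc}_{\partial B_R}u\le C\delta(R)R^{d}.
\]

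\textbf{Step 2 (improvement via the maximum principle).} By the maximum principle on $B_R$, $\operatorname{osc}_{B_R}u=\operatorname{osc}_{\partial B_R}u$. Set $\omega(R)=\operatorname{osc}_{B_R}u$. Redo Step 1 with $u-c$ in place of $u$, where $c$ is a value of $u$ on $\partial B_{2R}$. On $B(x,R/2)\subset B_{2R}$ this gives $|\nabla u|\le C\omega(2R)/R$ on the annulus near $\partial B_R$. Hence
\[
\omega(R)\le C\,\delta(R)\,\omega(2R).
\]
Choose $R_0$ so large that $C\delta(R)\le 2^{-d-1}$ for $R\ge R_0$. Iterating gives $\omega(R_0)\le 2^{-k(d+1)}\omega(2^kR_0)\le C2^{-k(d+1)}(2^kR_0)^d\to0$. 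So $u$ is constant on $B_{R_0}$, and by unique continuation (or by letting $R_0$ increase) $u$ is constant on $M$.

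\textbf{Main obstacle.} The delicate point is Step 2's use of the gradient estimate near $\partial B_R$. It requires the balls $B(x,R/2)$ to lie inside $B_{2R}$, which is immediate. It also requires the short geodesic between two points of $\partial B_R$ to stay where the gradient bound holds. Any geodesic of length at most $\delta R$ starting on $\partial B_R$ stays inside $B_{R+\delta R}\subset B_{2R}$, so this is fine as well. Once these containments are checked, the whole argument is just the doubling-type iteration.
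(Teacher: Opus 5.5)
The paper gives no proof of this proposition. It imports it from Carron's Theorem~1.1 (going back to Kasue) and uses it as a black box in the $\beta=0$ case. Your argument is a correct, self-contained proof of that imported result.

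The key chain is
\[
\operatorname{osc}_{B_R}u\;\le\;\operatorname{osc}_{\partial B_R}u\;\le\;\delta(R)R\,\sup_{z\in\gamma}|\nabla u|(z)\;\le\;C(n)\,\delta(R)\,\operatorname{osc}_{B_{2R}}u ,
\]
where $\gamma$ ranges over minimal geodesics joining pairs of points of $\partial B_R$. Each step holds:
\begin{itemize}
\item The first inequality is the maximum principle on the relatively compact ball.
\item The last step is Cheng--Yau on $B(z,R/2)$, applied to the positive harmonic function $u-c+\sup_{B(z,R/2)}|u-c|+\epsilon$.
\item The containment actually needed is $B(z,R/2)\subset B_{2R}$ for every $z$ on $\gamma$, not only for $z\in\partial B_R$. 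It holds because $d(p,z)\le R+\delta(R)R/2$ and $\delta(R)<1$ for large $R$.
\end{itemize}
The dyadic iteration against the polynomial bound then forces $\operatorname{osc}_{B_{R_0}}u=0$.

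Compared with the citation, your route needs only Cheng--Yau and the maximum principle. It also gives a quantitative byproduct: if $\operatorname{diam}\partial B_R\le\delta R$ for all large $R$ with $C\delta<1$, then any nonconstant harmonic function has $\operatorname{osc}_{B_R}u$ growing at least like $R^{\log_2(1/(C\delta))}$.

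Three small cleanups:
\begin{itemize}
\item Step~1 is superseded by Step~2 and can be dropped.
\item What you call the ``intrinsic reformulation'' is really the ambient bound $d_g(x,y)\le\delta(R)R$ for $x,y\in\partial B_R$. This follows from either reading of $\operatorname{diam}_g$ (extrinsic or intrinsic), so nothing is lost.
\item Unique continuation is unnecessary. The iteration can be started from any $R\ge R_0$, which gives constancy on every large ball directly.
\end{itemize}
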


\begin{proof}[Proof of Theorem \ref{thm: T1}]
Adopting the notation in \cite{M}, i.e., set $\underline{w}(r)=\inf_{|x|=r} w(x)$, in view of the assumption that $\beta=0$, 
it follows that $\lim_{r\to \infty} \frac{\underline{w}(r)}{\ln r}=- 1$. Consider the radial metric $\underline{g}=e^{2\underline{w}} g_0$. Introducing the intrinsic radial coordinates
$\rho(r)=\int_0^r e^{\underline{w}(s)}\dd s$ and the warping function
$ a(\rho)=r e^{\underline{w}(r)},$
we find 
\begin{equation}\notag
 \underline{g}
 =d\rho^2+a(\rho)^2g_{\mathbb{S}^{n-1}}.
\end{equation}

Based on the asymptotic radial symmetry (\ref{eq:asympradial}), Ma \cite[Page 29]{M} showed that 
\begin{equation} \label{eq:inclusion}
B^{\underline{g}}_R\subset B^{g}_R\subset B^{\underline{g}}_{R+o(R)},
\end{equation}
Combining this with the volume growth assumption $\beta=0$ and $w\geq \underline{w}$, we get 
\begin{align*}
    0=\lim_{R\to+\infty}
    \frac{vol_g(B^g(p,R))}{|\mathbb{B}^n|R^n}
    &\geq \lim_{R\to+\infty}
    \frac{vol_{\underline{g}}(B^{\underline{g}}(p,R))}{|\mathbb{B}^n|R^n} \\ \notag
    &=\lim_{R\to+\infty}
    \frac{area_{\underline{g}}
    (\partial B^{\underline{g}}(p,R))}{|\mathbb{S}^{n-1}|R^{n-1}} 
    =\lim_{\rho\to+\infty}
    \frac{a^{n-1}(\rho)}{\rho^{n-1}}
\end{align*}

It follows that
\begin{equation}\label{eq:a-small}
 a'(\rho)\to0,
 \qquad
 \frac{a(\rho)}{\rho}\to0,\qquad 
 \text{as }\rho\to+\infty.
\end{equation}
Therefore, 
\[
\operatorname{diam}(\partial B^{\underline{g}}_R)\leq \pi a(R)=o(R).
\]
By \cref{eq:inclusion}, one easily see that $\operatorname{diam}(\partial B^{g}_R)=o(R)$ as well. Hence the conclusion follows from \cref{prop:diam}.
\end{proof}

We also include a simple ODE argument for radially symmetric conformal metric $(\mathbb{R}^n, g=e^{2w(|x|)}g_0)$ with nonnegative Ricci curvature and $\beta=0$.

\begin{theorem}\label{thm:radial}
Let
\[
 g=e^{2w(|x|)}g_{0}
\]
be a complete and smooth rotationally symmetric metric on $\R^n$ with nonnegative Ricci curvature. Assume
$\beta=0$, then for every $d\ge0$
\[
 \mathcal{H}_d(\R^n,g)=\R.
\]
\end{theorem}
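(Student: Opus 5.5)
Since the statement advertises a simple ODE argument, I would avoid Proposition \ref{prop:diam} and work directly with the warped product structure. As in the proof of Theorem \ref{thm: T1}, introduce $\rho(r)=\int_0^r e^{w(s)}\dd s$ and $a(\rho)=re^{w(r)}$, so that $g=d\rho^2+a(\rho)^2g_{\mathbb S^{n-1}}$, with $\rho$ the $g$-distance to the origin and $\rho\to\infty$ by completeness. Nonnegative Ricci curvature in the radial direction gives $-(n-1)a''/a\ge0$, so $a$ is concave with $a(0)=0$ and $a'(0)=1$. Hence $a'$ is nonincreasing, $a'\ge0$ (otherwise $a$ would vanish at finite $\rho$), and $a(\rho)/\rho\downarrow \lim a' =: \ell$. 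The condition $\beta=0$ forces $\ell=0$, since $\operatorname{vol}B_R=|\mathbb S^{n-1}|\int_0^R a^{n-1}$ and $\beta=\ell^{n-1}$. Thus $a'(\rho)\to0$ and $a(\rho)=o(\rho)$.

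Next, decompose $u\in\mathcal H_d$ into spherical harmonics: $u=\sum_k f_k(\rho)\phi_k(\theta)$ with $\Delta_{\mathbb S^{n-1}}\phi_k=-\lambda_k\phi_k$. Each coefficient $f_k(\rho)=\int_{\mathbb S^{n-1}}u(\rho,\theta)\phi_k\,d\theta$ satisfies
\[
f_k''+(n-1)\frac{a'}{a}f_k'-\frac{\lambda_k}{a^2}f_k=0,
\]
is regular at $0$, and obeys $|f_k(\rho)|\le C(1+\rho)^d$. For $k=0$ we get $(a^{n-1}f_0')'=0$, so regularity at the origin gives $f_0$ constant. For $\lambda_k>0$ ($\lambda_k\ge n-1$), regularity gives $f_k\sim c\rho^{k}$ near $0$. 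We may take $c>0$; then the maximum principle ($f_k$ cannot have a positive interior local maximum, since $f_k''=\lambda_kf_k/a^2>0$ at a critical point) shows $f_k>0$ and $f_k'>0$ for all $\rho>0$.

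The main step, and the expected obstacle, is to show that every such $f_k$ grows faster than any polynomial. I would set $v=\ln f_k$ and $q=v'$, which gives the Riccati equation $q'+q^2+(n-1)\frac{a'}{a}q=\frac{\lambda_k}{a^2}$. Because $a'\to0$ and $a=o(\rho)$, for any large $N$ there is $\rho_0$ with $a(\rho)\le \epsilon\rho$ and $(n-1)a'/a\le \epsilon/ \rho$ for $\rho\ge\rho_0$. I then compare $q$ with $N/\rho$. Wherever $q\le N/\rho$, we have $q'\ge \lambda_k/(\epsilon^2\rho^2)-N^2/\rho^2-\epsilon N/\rho^2>-N/\rho^2$ for $\epsilon$ small, so once $q$ exceeds $N/\rho$ it stays above. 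If $q$ never exceeded $N/\rho$, then $q'\ge c/\rho^2$ with $c$ large, and integrating yields a contradiction with $q>0$ bounded by $N/\rho$. A cleaner version of the same argument: $(\rho q)'\ge q+\rho(\lambda_k/a^2-q^2-\epsilon q/\rho)$ is bounded below by a positive multiple of $1/\rho$ while $\rho q\le N$, so $\rho q$ must reach $N$. Consequently $f_k(\rho)\ge c\rho^{N}$ for large $\rho$, for every $N$.

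Finally, $|f_k(\rho)|\le \|u\|_{L^\infty(\partial B_\rho)}|\mathbb S^{n-1}|^{1/2}\|\phi_k\|_{L^2}\le C(1+\rho)^d$. Combined with super-polynomial growth, this forces $f_k\equiv0$ for every $k\ge1$. Hence $u=f_0$ is constant, proving $\mathcal H_d(\R^n,g)=\R$.
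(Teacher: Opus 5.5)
Your proposal is correct and follows essentially the same route as the paper: spherical-harmonic modes, positivity and monotonicity of the regular modes, a Riccati equation for the logarithmic derivative, and a comparison argument forcing super-polynomial growth. The only difference is that the paper passes to $s=\int_{\rho_0}^{\rho}dt/a(t)$ to get $v\ge Ce^{cs}$ with $s\ge\varepsilon^{-1}\log(\rho/\rho_\varepsilon)$, whereas you compare $f_k'/f_k$ with $N/\rho$ directly.

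One slip needs fixing. The bound $(n-1)a'/a\le\varepsilon/\rho$ is false in general; for instance it fails when $a(\rho)=\sqrt{2\rho}$ for large $\rho$. However, the monotonicity of $a/\rho$ that you already derived gives $a'/a\le 1/\rho$. The resulting term $(n-1)N/\rho^2$ is then absorbed by $\lambda_k/(\varepsilon^2\rho^2)$ exactly as in your argument.
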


\begin{proof}
As above, we can now assume 
$g=d\rho^2+a(\rho)^2g_{\mathbb{S}^{n-1}}$ under the intrinsic radial coordinates and $a(\rho)$ satisfies (\ref{eq:a-small}).

Let $u$ be a smooth harmonic function of polynomial growth.  Expand it in spherical harmonics:
\[
 u(\rho,\theta)
 =\sum_{\ell=0}^\infty\sum_m
 v_{\ell,m}(\rho)Y_{\ell,m}(\theta),
\]
where
\[
 -\Delta_{\mathbb{S}^{n-1}}Y_{\ell,m}
 =\lambda_\ell Y_{\ell,m},
 \qquad
 \lambda_\ell=\ell(\ell+n-2).
\]
The harmonicity of $u$ yields that every coefficient $v_{\ell,m}$ satisfies an ODE
\begin{equation}\label{eq:mode-ode}
 v''+(n-1)\frac{a'}a v'
 -\frac{\lambda_\ell}{a^2}v=0.
\end{equation}

For $\ell=0$, equation \cref{eq:mode-ode} gives
\[
 (a^{n-1}v')'=0.
\]
The regularity at the origin forces $v'=0$, so the radial mode $ v_{0,1}Y_{0,1}$ is constant since the only harmonic function on $\mathbb{S}^{n-1}$ is constant. 

Now suppose $\ell\ge1$.  A nonzero coefficient that is regular at the origin has, after changing its sign if necessary,
\[
 v(\rho)>0,
 \qquad
 v'(\rho)>0
\]
for all $\rho>0$.  Indeed, the divergence form
\[
 (a^{n-1}v')'=\lambda_\ell a^{n-3}v
\]
prevents a positive regular solution from acquiring a first zero or a first nonpositive derivative.

Define
\[
 s(\rho)=\int_{\rho_0}^{\rho}\frac{dt}{a(t)}.
\]
In the $s$-variable, \cref{eq:mode-ode} becomes
\[
 v_{ss}+(n-2)a'(\rho(s))v_s-\lambda_\ell v=0.
\]
Set $q=v_s/v>0$.  Then
\begin{equation}\label{eq:riccati}
 q'=\lambda_\ell-(n-2)a' q-q^2.
\end{equation}
Since $\beta=0$, we could argue as before to derive 
\begin{equation}\label{eq:a-small_radial case}
    0=\lim_{R\to+\infty}
    \frac{vol_{g}(B^{g}(p,R))}{|\mathbb{B}^n|R^n}
    =\lim_{R\to+\infty}
    \frac{area_{g}
    (\partial B^{g}(p,R))}{|\mathbb{S}^{n-1}|R^{n-1}}
    =\lim_{\rho\to+\infty}
    \frac{a^{n-1}(\rho)}{\rho^{n-1}}
    =\lim_{\rho\to+\infty}a'(\rho)^{n-1}.
\end{equation}

Choose $c>0$ and $S$ sufficiently large so that
\[
 c^2+(n-2)a'(\rho(s))c<\lambda_\ell
 \qquad \forall s\ge S.
\]
Equation \cref{eq:riccati} shows that $q$ cannot cross the level $c$ downward for large $s$, and if $q<c$ for a long interval, then $q'$ is uniformly positive. Hence, after increasing $S$, we have
\[
 q(s)\ge c
 \qquad \forall s\ge S.
\]
Therefore,
\begin{equation}\label{eq:exp-s}
 v(\rho)\ge C\exp\bigl(c s(\rho)\bigr).
\end{equation}
Finally, \cref{eq:a-small_radial case} implies that
\begin{equation}\label{eq:s-superlog}
 \frac{s(\rho)}{\log\rho}\to\infty.
\end{equation}
Indeed, for every $\varepsilon>0$, one has $a(t)\le\varepsilon t$ for all sufficiently large $t$, and hence
\[
 s(\rho)\ge\frac1\varepsilon\log\frac{\rho}{\rho_\varepsilon}.
\]
Combining \cref{eq:exp-s} and \cref{eq:s-superlog}, every nonzero mode with $\ell\ge1$ grows faster than every power of $\rho$.  Since $u$ has polynomial growth, all such coefficients vanish.  
In conclusion, only the $\ell=0$ mode remains and $u$ must be a constant.
\end{proof}

\section{Distance comparison for Euclidean volume growth}

We carry out the proof of Theorem \ref{thm:T2} in the following three sections. The general strategy is inspired by Lin's work on asymptotically conic elliptic operators \cite{L}. 
In this section, we compare the intrinsic distance of the conformal metric with the Euclidean distance, thereby translating intrinsic polynomial growth into growth estimates adapted to Euclidean rescaling. 
In Section~4, we establish a weighted compactness theorem for the rescaled harmonic equations and show that normalized blow-down sequences converge to harmonic functions on a fixed metric cone.
In Section~5, we exploit the spectral decomposition  on the limiting cone to obtain a uniform $L^2$ growth ratio upper bound. A scale-dependent Gram-Schmidt procedure then allows us to blow down a finite-dimensional space of harmonic functions simultaneously while preserving linear independence, leading to the desired dimension estimate.

In this section, we shall prove \cref{thm:distance}.  Two fundamental constants that we frequently refer to are: 
\begin{itemize}
    \item $\beta$: the asymptotic volume ratio of a complete noncompact manifold with nonnegative Ricci curvature, i.e., 
    \begin{equation*}
        \beta=\lim_{r\to \infty}\frac{\operatorname{vol}(B(p,r))}{\omega_nr^n}.
    \end{equation*}
    \item $m$: the asymptotic exponent of the conformal factor. Let $(\mathbb{R}^n, e^{2w}g_0),(n\geq 3)$ be a complete Riemannian manifold with nonnegative Ricci curvature, then there exists $m\in [0,1]$ (see ~\cite[Theorem 1.3]{MQ}), such that
    \begin{equation}\label{def of m}
        \liminf_{x\to \infty} 
    \frac{ w(x)}{\ln |x|}=-m.
    \end{equation}  
\end{itemize}

We shall first derive a technical tool: a refined Harnack inequality \cref{proposition. Harnack ineq.}. Then we divide the proof into two steps, following the strategy of Li-Tam~\cite{LT2}. The first step appeals to the pointwise lower bound of the conformal factor $w$ due to Ma-Qing, and a lower bound of the $g$-volume of the Euclidean ball is obtained. Then an elementary inclusion relation between the geodesic ball and the Euclidean ball implies that
    \[
        \liminf_{x\to \infty}\frac{\ln r(x)}{\ln |x|}\geq 1-m.
    \]
The refined Harnack inequality is used in this part.

The difficulty of the second step lies in the fact that there is no pointwise upper bound of the conformal factor $w$. In contrast, inspired by \cite[Theorem 3.1, Proposition 4.1]{MQ}, an upper bound of the $g$-volume of the Euclidean ball could be derived via several involved results in nonlinear potential theory. It follows analogously that
    \[
        \limsup_{x\to \infty}\frac{\ln r(x)}{\ln |x|}\leq 1-m.
    \]
Once again, the refined Harnack inequality \cref{proposition. Harnack ineq.} is used in this step.

We also include a proof relating the the comparison of two distance functions to the asymptotic volume ratio $\beta$. The conformal invariance of the $n$-Laplace operator indicates that $\ln |x|$ is the $n$-Green function on $(M^n,g)$. Meanwhile, $\int_1^{r(x)}A(t)^{-\frac{1}{n-1}}\dd t$ is the $n$-Green function when $(M^n,g)$ is radially symmetric. Combining with the refined  Harnack inequality, a comparison principle argument is applied to these functions to show that 
    \[
        \limsup_{x\to\infty}\frac{\ln r(x)}{\ln |x|}\leq \beta^{\frac{1}{n-1}}.
    \]

\begin{remark}
In view of \cref{eq:inclusion}, the conclusion of \cref{thm:distance} follows. Moreover it also holds when $\beta=0$. We present our proof anyway as it has independent interest. 
\end{remark}

\subsection{A refined Harnack inequality}
 \ \\
In this subsection, we prove a refined Harnack inequality which serves as a basis analytical tool in each step.

\begin{proposition}\label{proposition. Harnack ineq.}
    Let $(\mathbb{R}^n,e^{2w}g_0)$ be a complete Riemannian manifold with nonnegative Ricci curvature and $\beta>0$.
    Then 
    \[
        s(R)\leq i(R) e^{o(1)} \quad \text{as }R\to +\infty,
    \]
    where $s(r):=\sup_{\partial B^g(0,r) }\ln |x| $ and $i(r):=\inf_{\partial B^g(0,r)}\ln |x| $.
\end{proposition}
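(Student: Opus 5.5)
The plan is to use that $u=\ln|x|$ is $n$-harmonic for $g$ on $\mathbb{R}^n\setminus\{0\}$, since the $n$-Laplacian is conformally invariant. Together with the Cheng--Yau type gradient estimate for positive $p$-harmonic functions under $\operatorname{Ric}\ge0$ (Kotschwar--Ni, Wang--Zhang), this gives a scale-invariant Harnack inequality for positive $n$-harmonic functions on $g$-balls. A Harnack inequality applied to $u$ directly only gives $s(R)\le C\,i(R)$. The refinement is to apply it to $u$ shifted by a constant, which turns the multiplicative constant into an additive $O(1)$ error. Since $i(R)\to\infty$ (because $r(x)\to\infty$ forces $|x|\to\infty$, by completeness), an $O(1)$ additive error gives $s(R)/i(R)\to1$, which is the statement.

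\emph{Step 1 (positivity of a shifted function).} Fix $\delta\in(0,1/4)$. On $\Omega_\delta=\mathbb{R}^n\setminus \overline{B^g(0,\delta R)}$ the function $u$ is $n$-harmonic and tends to $+\infty$ at infinity. By the comparison/minimum principle for $n$-harmonic functions, $u\ge i(\delta R)$ on $\Omega_\delta$. Hence $v:=u-i(\delta R)$ is a nonnegative $n$-harmonic function there; adding a small constant and letting it tend to zero, we may treat $v$ as positive.

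\emph{Step 2 (Harnack on the sphere at scale $R$).} Since $\beta>0$, $\partial B^g(0,R)$ can be covered by $N=N(n,\beta)$ balls $B^g(y_j,\delta R/2)$ with centers on the sphere. Moreover, any two points of $\partial B^g(0,R)$ can be joined by a chain of at most $N'(n,\beta,\delta)$ such balls whose doubles stay in the annulus $B^g(0,2R)\setminus B^g(0,\delta R)$. This annulus connectivity is the standard consequence of $\operatorname{Ric}\ge0$ with Euclidean volume growth: the manifold has one end and the annulus is connected at every large scale. Chaining the gradient estimate $|\nabla \ln v|\le C(n)/(\delta R)$ along such a chain yields
\[
s(R)-i(\delta R)\le C_1(n,\beta,\delta)\,\bigl(i(R)-i(\delta R)\bigr),
\]
so that $s(R)-i(R)\le (C_1-1)\bigl(i(R)-i(\delta R)\bigr)$.

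\emph{Step 3 (bounded increment of $i$ between comparable scales).} It remains to show $i(R)-i(\delta R)\le C_2$ for all large $R$, with $C_2$ independent of $R$. I expect this to be the main obstacle. The plan is a flux/capacity argument. The $g$-flux $\int_\Sigma|\nabla u|^{n-2}\partial_\nu u$ across any hypersurface separating $0$ from infinity equals $|\mathbb{S}^{n-1}|$, because it is conformally invariant and equals the Euclidean flux of $\ln|x|$. The $n$-energy of $u$ on the region $\{i(\delta R)<u<i(R)\}$ is therefore $|\mathbb{S}^{n-1}|\,(i(R)-i(\delta R))$. On the other hand, $u$ is the $n$-capacitary potential of this region, up to an affine normalization, so its energy is $(i(R)-i(\delta R))^{n}\operatorname{cap}_n$ of the condenser. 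That condenser contains the geodesic annulus $B^g(0,R)\setminus B^g(0,\delta R)$, whose $n$-capacity is bounded below by $c(n,\beta)\,(\ln(1/\delta))^{1-n}$. This lower bound comes from volume doubling plus the Poincar\'e inequality, using Euclidean volume growth. Comparing the two expressions gives $(i(R)-i(\delta R))^{n-1}\le C(n,\beta,\delta)$.

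If the monotonicity of capacity with respect to the sublevel-set condenser is awkward to set up, the fallback is the paper's suggested route: compare $u$ with the radial $n$-Green function $\int_1^{r}A(t)^{-1/(n-1)}\dd t$ by the comparison principle on annuli. Combining Steps 2 and 3 gives $0\le s(R)-i(R)\le C$, hence $s(R)\le i(R)\bigl(1+C/i(R)\bigr)=i(R)e^{o(1)}$.
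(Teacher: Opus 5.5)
Your Step~3 is where the argument breaks, and it is the decisive step: Steps~1--2 only reduce the statement to a bounded increment of $i$ between comparable scales.

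The containment you use is false. The condenser $\{i(\delta R)<u<i(R)\}$ is the Euclidean annulus $e^{i(\delta R)}<|x|<e^{i(R)}$. Whenever $s(R)>i(R)$, the geodesic annulus $B^g(0,R)\setminus B^g(0,\delta R)$ contains points near a maximizer of $|x|$ on $\partial B^g(0,R)$, where $u$ is close to $s(R)>i(R)$. The monotonicity also runs the other way. If one ring separating $0$ from $\infty$ contains another, it has the \emph{smaller} $n$-capacity, because test functions for the smaller ring are admissible for the larger one. So a lower bound for the geodesic annulus could only bound the Euclidean condenser's capacity from above, i.e.\ bound $i(R)-i(\delta R)$ from below. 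The containment in the useful direction would essentially require $s(\delta R)\le i(\delta R)$, which is what you are proving.

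What your energy comparison actually gives is $i(R)-s(\delta R)\le C(n,\beta,\delta)$: truncate $u$ between $s(\delta R)$ and $i(R)$, which is admissible for the geodesic condenser by the maximum and minimum principles. Writing $\operatorname{osc}:=s-i$, Step~2 then yields only $\operatorname{osc}(R)\le (C_1-1)\bigl(C_2+\operatorname{osc}(\delta R)\bigr)$. This does not close, since the chained Harnack constant $C_1$ is not close to $1$. Iterating gives at best polynomial growth of $\operatorname{osc}$, which is useless because $i$ grows only logarithmically. Your fallback does not help either. Comparison with $\int A^{-1/(n-1)}\dd t$ is the paper's lemma for the $\beta$-bound, and it yields the \emph{lower} bound $s(R_2)-i(R_1)\ge|\mathbb{S}^{n-1}|^{1/(n-1)}\int_{R_1}^{R_2}A^{-1/(n-1)}(t)\dd t$.

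The missing ingredient is a lower bound on the $n$-modulus of curves joining two \emph{continua} (not geodesic balls) whose distance is comparable to their diameters. This is the Loewner property of $(\mathbb{R}^n,g)$, which holds by Heinonen--Koskela: $\beta>0$ makes the space Ahlfors $n$-regular, and $\operatorname{Ric}\ge0$ gives a Poincar\'e inequality. Take $E$ to be the Euclidean segment from $0$ to a minimizer of $|x|$ on $\partial B^g(0,R)$. Take $F$ to be a compact piece, of $g$-diameter at least $R$, of the outward radial ray from a maximizer. Then $\operatorname{dist}_g(E,F)\le R\le\min\{\operatorname{diam}_gE,\operatorname{diam}_gF\}$. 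By conformal invariance, the modulus is at most the Euclidean value $|\mathbb{S}^{n-1}|\bigl(s(R)-i(R)\bigr)^{1-n}$. This gives $s(R)-i(R)\le C(n,\beta)$ at once, a stronger additive form of the statement, and makes Steps~1--2 unnecessary.

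The paper avoids capacity lower bounds altogether. It controls $\ln G=\ln\ln|x|$ rather than $\ln|x|$; by conformal invariance its $n$-energy on $\{|x|>e\}$ equals $|\mathbb{S}^{n-1}|/(n-1)$, so the tail outside $B^g(0,r)$ tends to $0$. It then uses a Moser-type bound $\sup_{B^g(p,r)}|\nabla^g\ln G|_g\le C(n,\beta)\,r^{-1}\|\nabla^g\ln G\|_{L^n(B^g(p,3r/2))}$. Chaining this along boundedly many balls in an annulus gives $\ln s(r)-\ln i(r/2)=o(1)$. Together with the monotonicity of $i$, this is the multiplicative $e^{o(1)}$ directly.
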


We first establish several simple lemmas.  
\begin{lemma}
     Let $(\mathbb{R}^n,e^{2w}g_0)$ be a complete Riemannian manifold with nonnegative Ricci curvature. Denote $r(x):=\operatorname{dist}_g(x,0)$ the distance to the origin.
     Then 
     \[
         -\Delta_n \ln r(x)\geq 0.
     \]
\end{lemma}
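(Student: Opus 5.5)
The plan is a direct computation combined with the Laplacian comparison theorem, done first pointwise away from the cut locus and then extended to the weak (distributional) sense. Here $\Delta_n u=\operatorname{div}_g(|\nabla u|_g^{n-2}\nabla u)$ denotes the $n$-Laplacian of $g$. Since $n$-harmonicity is conformally invariant in dimension $n$, the same inequality then holds for the Euclidean $n$-Laplacian in the weak sense. I would state the lemma in that weak form, since it is what the later comparison-principle arguments need.

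\textbf{Smooth computation.} At points where $r$ is smooth (away from $0$ and the cut locus of $0$), set $u=\ln r$. Then $\nabla u=\nabla r/r$ and $|\nabla u|=1/r$, because $|\nabla r|=1$. Hence $|\nabla u|^{n-2}\nabla u=r^{1-n}\nabla r$, and
\[
\Delta_n \ln r=\operatorname{div}_g\bigl(r^{1-n}\nabla r\bigr)=r^{1-n}\Bigl(\Delta r-\frac{n-1}{r}\Bigr).
\]
The Laplacian comparison theorem for $\operatorname{Ric}_g\ge 0$ gives $\Delta r\le (n-1)/r$. Therefore $-\Delta_n\ln r\ge 0$ at such points.

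\textbf{Globalization.} This step needs care, and I expect it to be the main, though standard, obstacle. I would show that for every nonnegative $\varphi\in C_c^\infty(\mathbb{R}^n)$,
\[
\int r^{1-n}\langle\nabla r,\nabla\varphi\rangle \dd V_g\ \ge 0 .
\]
This is the weak form of $-\Delta_n\ln r\ge 0$.

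The vector field $X=r^{1-n}\nabla r$ is in $L^1_{loc}$, since near $0$ we have $|X|\sim r^{1-n}$ while $\dd V_g\sim r^{n-1}\dd r\,d\theta$. So the integral is well defined.

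There are two singular sets to handle.

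\emph{The cut locus.} I would use that $\Delta r\le (n-1)/r$ holds in the distributional sense on $M\setminus\{0\}$. The cut locus has measure zero, and the singular part of the distributional Laplacian of $r$ across it is a nonpositive measure; equivalently, one can argue in polar coordinates through the comparison of the volume density along geodesics up to the cut point. Multiplying by the smooth positive function $r^{1-n}$ on the support of $\varphi$ preserves the sign of both the absolutely continuous and the singular parts. One can also bypass distributions entirely by the Calabi trick: near a cut point $x$, replace $r$ by the smooth upper barrier $\varepsilon+d(x,\gamma(\varepsilon))$. This shows the inequality holds in the viscosity sense, and viscosity supersolutions of the $n$-Laplace equation are weak supersolutions by Juutinen--Lindqvist--Manfredi.

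\emph{The origin.} Excise a small ball $B^g_\delta(0)$ and integrate by parts. The boundary term is
\[
-\int_{\partial B_\delta}\varphi\, r^{1-n}\,\dd A_g\;\longrightarrow\; -|\mathbb{S}^{n-1}|\,\varphi(0)\le 0 \quad\text{as }\delta\to0,
\]
because $\operatorname{area}_g(\partial B_\delta)\sim|\mathbb{S}^{n-1}|\delta^{n-1}$. This contributes a negative Dirac mass to $\Delta_n\ln r$, i.e.\ a positive one to $-\Delta_n\ln r$, which is compatible with the desired sign.

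Combining the pointwise inequality, the nonpositive singular contribution along the cut locus, and the boundary term at the origin yields $-\Delta_n\ln r\ge 0$ weakly on all of $\mathbb{R}^n$. By conformal invariance, $\ln r$ is therefore an $n$-superharmonic function with respect to $g_0$ as well.
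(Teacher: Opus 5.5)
Your smooth computation is exactly the paper's proof. The paper writes $-\Delta_n^g\ln r=r^{1-n}\bigl(-\Delta^g r+\frac{n-1}{r}\bigr)\ge0$ and invokes Laplacian comparison, implicitly on $\mathbb{R}^n\setminus\{0\}$ where $r$ is smooth; it never discusses the cut locus or the origin. Your cut-locus globalization is a legitimate refinement, but it needs one small repair. $r^{1-n}$ is only Lipschitz across the cut locus, not smooth. So instead of multiplying the measure $\Delta r$ by a smooth function, write $r^{1-n}\nabla\varphi=\nabla(r^{1-n}\varphi)+(n-1)r^{-n}\varphi\nabla r$. Then apply the distributional inequality $\Delta r\le\frac{n-1}{r}$ to the nonnegative Lipschitz test function $r^{1-n}\varphi$. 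The two resulting terms cancel and give $\int r^{1-n}\langle\nabla r,\nabla\varphi\rangle\,\dd V_g\ge0$ for every $\varphi\ge0$ supported away from $0$.

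The treatment of the origin, however, has a sign error, and the strengthened statement you aim for is false. Your boundary term $-\int_{\partial B_\delta}\varphi\,r^{1-n}\,\dd A_g\to-|\mathbb{S}^{n-1}|\varphi(0)$ is computed correctly. But it is a contribution to $\int r^{1-n}\langle\nabla r,\nabla\varphi\rangle\,\dd V_g=\langle-\Delta_n\ln r,\varphi\rangle$, which is exactly the quantity you need to be nonnegative. So it is a \emph{negative} atom of $-\Delta_n\ln r$; equivalently, $\Delta_n\ln r$ carries $+|\mathbb{S}^{n-1}|\delta_0$. This matches $\Delta_n\ln|x|=|\mathbb{S}^{n-1}|\delta_0$, which the paper itself uses at the end of the lemma bounding $s(R_2)-i(R_1)$ from below by $|\mathbb{S}^{n-1}|^{\frac{1}{n-1}}\int_{R_1}^{R_2}A^{-\frac{1}{n-1}}(t)\,\dd t$.

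Concretely, take $0\le\varphi\le1$ supported in $B^g_\epsilon(0)$ with $\varphi(0)=1$. Since $g$ is smooth at $0$, $\Delta r-\frac{n-1}{r}=O(r)$ there, so the interior contribution is $O(\epsilon^2)$ and the pairing is about $-|\mathbb{S}^{n-1}|<0$. More simply, $\ln r\to-\infty$ at $0$ while it is bounded below on $\partial B^g_\epsilon(0)$, which violates the minimum principle for $n$-supersolutions.

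So neither the weak inequality on all of $\mathbb{R}^n$ nor your final claim that $\ln r$ is $n$-superharmonic for $g_0$ on $\mathbb{R}^n$ holds. What is true is $-\Delta_n\ln r\ge-|\mathbb{S}^{n-1}|\delta_0$ on $\mathbb{R}^n$, i.e.\ $-\Delta_n\ln r\ge0$ on $\mathbb{R}^n\setminus\{0\}$, which is the content of the paper's pointwise computation. Drop the origin paragraph and restrict to test functions in $C_c^\infty(\mathbb{R}^n\setminus\{0\})$. With that change and the Lipschitz fix above, your argument is a correct, more careful version of the paper's.
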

\begin{proof}
    The proof is by a direct calculation.
    \begin{align*}
        -\Delta_n^g \ln r
        &=-\mathrm{div}^g
        (|\nabla^g \ln r|_g^{n-2}\nablag \ln r)
        =-\mathrm{div}^g(r^{1-n}\nablag r)\\
        &=r^{1-n}\left(-\Delta^g r+\frac{n-1}{r}\right)\geq 0.
    \end{align*}
\end{proof}

\begin{lemma}\label{lemma. nondecreasing of i(r)}
    Let $(\mathbb{R}^n,e^{2w}g_0)$ be a complete Riemannian manifold with nonnegative Ricci curvature. 
     Then  $s(r):=\sup_{\partial B^g(0,r)}\ln |x|,\   i(r):=\inf_{\partial B^g(0,r)}\ln |x|$ are both non-decreasing functions as $r\to +\infty$.
\end{lemma}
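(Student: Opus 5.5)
The plan is a maximum principle argument applied to the function $\ln|x|$, which is $n$-harmonic for $g$ away from the origin. Since the $n$-Laplacian is conformally invariant in dimension $n$, $\Delta^g_n \ln|x| = 0$ on $\mathbb{R}^n\setminus\{0\}$. By completeness of $g$ on $\mathbb{R}^n$, the geodesic balls $B^g(0,r)$ are bounded open sets containing the origin. As $r\to\infty$ they exhaust $\mathbb{R}^n$. The sphere $\partial B^g(0,r)$ is compact, so $s(r)$ and $i(r)$ are finite.

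\textbf{Monotonicity of $s$.} Fix $r_1<r_2$ and take a point $x_0\in\partial B^g(0,r_1)$ with $\ln|x_0| = s(r_1)$. Apply the maximum principle for the $n$-harmonic function $\ln|x|$ on the annular domain $\Omega = B^g(0,r_2)\setminus \overline{\mathbb{B}_\epsilon}$, where $\epsilon$ is so small that $\mathbb{B}_\epsilon\subset B^g(0,r_1)$. On $\partial\mathbb{B}_\epsilon$ we have $\ln|x| = \ln\epsilon$, which is smaller than any value of $\ln|x|$ near $\partial B^g(0,r_1)$. Hence
\[
\ln|x_0| \le \max\Bigl(\ln\epsilon,\ \sup_{\partial B^g(0,r_2)}\ln|x|\Bigr) = s(r_2).
\]
Alternatively, one can skip the maximum principle and argue elementarily: $\ln|x|$ is a radial increasing function of $|x|$, and the Euclidean ball $\{|x|<e^{s(r_2)}\}$ contains $\partial B^g(0,r_2)$. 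The set $\{|x|\le e^{s(r_2)}\}$ is compact with connected complement. A point $x$ with $|x|>e^{s(r_2)}$ can be joined to $\infty$ by a ray avoiding $\partial B^g(0,r_2)$, and since $B^g(0,r_2)$ is bounded, such an $x$ lies outside $\overline{B^g(0,r_2)}$. Therefore $\overline{B^g(0,r_1)}\subset\{|x|\le e^{s(r_2)}\}$, which gives $s(r_1)\le s(r_2)$.

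\textbf{Monotonicity of $i$.} Use the symmetric topological argument. Take any $y\in\partial B^g(0,r_2)$. The radial segment from $0$ to $y$ is a continuous path starting at $r(0)=0$ and ending at $r(y)=r_2>r_1$. By continuity of the distance function, it meets $\partial B^g(0,r_1)$ at some point $z$ with $|z|\le|y|$. Hence $i(r_1)\le\ln|z|\le\ln|y|$, and taking the infimum over $y$ gives $i(r_1)\le i(r_2)$.

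The same radial-segment reasoning handles $s$ as well, so the maximum principle is not really needed. The only point requiring care is the use of completeness: it guarantees that geodesic balls are relatively compact, so that $s$ and $i$ are finite and the spheres $\partial B^g(0,r)$ are nonempty for every $r>0$. The nonnegative Ricci curvature hypothesis is not used here. I expect no real obstacle; the one subtlety is choosing the argument for $s$ so that it does not implicitly assume the geodesic spheres are star-shaped with respect to the origin.
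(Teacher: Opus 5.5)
Your proof is correct. Your main argument, the radial-path one, differs from the paper's, although your first argument for $s$ is exactly what the paper does.

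The paper works entirely through the maximum principle. By conformal invariance, $\ln|x|$ is $n$-harmonic for $g$ on $\mathbb{R}^n\setminus\{0\}$.
\begin{itemize}
\item For $s$, it applies the maximum principle on $B^g(0,R_2)\setminus B^g(0,r)$ with $r\to 0$, using $\ln|x|\to-\infty$ at the origin.
\item For $i$, it applies the minimum principle on $B^g(0,R)\setminus B^g(0,r_1)$ with $R\to\infty$, using $\ln|x|\to+\infty$ at infinity.
\end{itemize}

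Your radial-segment argument replaces both steps with the intermediate value theorem along rays. It uses only three facts: $\ln|x|$ increases along rays from the origin, $\partial B^g(0,r)=\{r(x)=r\}$ in a length space, and $g$-balls are Euclidean-bounded by completeness.

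What your route buys is that it is purely topological. It needs no PDE input and, as you note, neither the curvature hypothesis nor even conformality; it holds for any complete metric on $\mathbb{R}^n$. What the paper's route buys is independence from radial symmetry: it uses only the $n$-harmonicity of $\ln|x|$ and its limits at $0$ and $\infty$. It would therefore apply to any $n$-harmonic function with that pole behaviour, which fits the $n$-potential-theoretic viewpoint of Section 3.

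One small slip: $\partial B^g(0,r_2)$ lies in the closed ball $\{|x|\le e^{s(r_2)}\}$, not the open one, since the supremum is attained. The rest of your argument already uses the closed ball, so nothing changes.
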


\begin{proof}
    By the conformal invariance of the $n$-Laplace operator, we have
    \[
        \Delta_n^g\ln |x|=0,\quad \text{on } \mathbb{R}^n\setminus\{0\}.
    \]
    Note that $\ln |x|\to-\infty$ as $x\to 0$, we could apply the maximum principle to $\ln |x|$ on $B^g(0,R_2)\setminus B^g(0,r)$, where $0<r\ll1,\ r<R_2$, to conclude that
    \[
        \sup_{\partial B^g(0,R_1)}\ln |x|
        \leq  \sup_{\partial B^g(0,R_2)}\ln |x|,\quad \forall \,0<R_1<R_2.
    \]
 Similarly, noting that $\ln |x|\to +\infty$ as $x\to \infty$, we apply the minimum principle to $\ln |x|$ on $B^g(0,R)\setminus B^g(0,r_1)$, where $0<r_1<R,\ 1\ll R$, to conclude that
    \[
        \inf_{\partial B^g(0,r_2)}\ln |x|
        \geq  \inf_{\partial B^g(0,r_1)}\ln |x|,\quad \forall \,0<r_1<r_2.
    \]
\end{proof}

\begin{lemma}\label{lemma. weak Harnack ineq. for grad G}
    Let $(\mathbb{R}^n,e^{2w}g_0)$ be a complete Riemannian manifold with nonnegative Ricci curvature. Define $G(x):=\ln |x|$ and $f(x)=(n-1)^2\frac{|\nabla^g G|_g^2}{G^2}$ on $\mathbb{R}^n\setminus B(0,e)$. Then for $B^g(p,R)\subset \mathbb{R}^n\setminus B(0,e)$, the following holds.
    \begin{enumerate}
        \item There exist constants $b_1=b_1(n)\geq n^3$ and $C=C(n)>0$ such that
        \[
            |f|_{L^{\infty}(B^g(p,\frac{R}{2}))}
            \leq C(n)
            \left(
            \fint_{B^g(p,\frac{3R}{4})}f^{b_1}\dd V_g
            \right)^{\frac{1}{b_1}},
        \]
        where $\fint_{B^g(p,R)}f\dd V_g:=\frac{1}{Vol_g(B^g(p,R))}\int_{B^g(p,R)}f\dd V_g.$
        \item There exists a constant $C(n)$  such that
        \[
            |f|_{L^{\infty}(B^g(p,\frac{R}{2}))}
            \leq C(n)
            \left(
            \fint_{B^g(p,\frac{3R}{4})}f^{\frac{n}{2}}\dd V_g
            \right)^{\frac{2}{n}}.
        \]
    \end{enumerate}
\end{lemma}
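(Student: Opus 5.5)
The plan is to derive a Bochner-type subsolution inequality for $f$ and then run Moser iteration on $(M,g)$. Since $\operatorname{Ric}_g\ge0$, the Sobolev inequality is available on balls, and so is volume doubling (Bishop--Gromov). Both constants depend only on $n$ up to the volume normalization, and the averaged form of the estimate absorbs that normalization.

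The structure of $G=\ln|x|$ on $\mathbb{R}^n\setminus B(0,e)$ is what makes this work. By conformal invariance, $G$ is $n$-harmonic with respect to $g$. It has no critical points, since $|\nabla^{g}G|_g=e^{-w}/|x|>0$. It also satisfies $G\ge1$. So $G$ is a positive $n$-harmonic function without critical points, and the relevant computation is the Cheng--Yau / Kotschwar--Ni gradient estimate for positive $p$-harmonic functions with $p=n$. Set $v=(n-1)\ln G$. Then $f=|\nabla v|^2$ (indeed $|\nabla^g v|^2=(n-1)^2|\nabla G|^2/G^2$). Moreover $v$ satisfies the $n$-Laplace-type equation
\[
\Delta_n v = -(n-1)|\nabla v|^{n}\cdot\tfrac{1}{n-1},
\]
or an equation of this shape, obtained from $G=e^{v/(n-1)}$.

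The first step is to consider the linearized operator
\[
\mathcal{L}\psi=\operatorname{div}\bigl(f^{\frac{n-2}{2}}A(\nabla\psi)\bigr),\qquad A=\mathrm{Id}+(n-2)\frac{\nabla v\otimes\nabla v}{|\nabla v|^2},
\]
which is uniformly elliptic with ellipticity ratio $n-1$, modulo the weight $f^{(n-2)/2}$. Applying Bochner's formula with $\operatorname{Ric}\ge0$, as in Kotschwar--Ni, I expect a differential inequality of the form
\[
\mathcal{L}f\ \ge\ c(n) f^{\frac n2+1}-C(n)f^{\frac{n-1}{2}}|\nabla f| .
\]
In particular, after absorbing the gradient term with Cauchy--Schwarz, this gives a weak subsolution inequality
\[
\int f^{\frac{n-2}{2}}\langle A\nabla f,\nabla\phi\rangle\le C\int f^{\frac{n-2}{2}}|\nabla f||\nabla v|\,\phi
\]
for nonnegative test functions $\phi$.

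The second step is the Moser iteration. I test the inequality with $\phi=\eta^2 f^{q}$ for large $q$, where $\eta$ is a cutoff between radii $\tfrac R2$ and $\tfrac{3R}{4}$. This yields a reverse Poincaré inequality for $f^{(q+n/2)/2}$. The extra weight $f^{(n-2)/2}$ and the lower-order term $|\nabla v|=f^{1/2}$ produce terms like $\int \eta^2 f^{q+\frac{n}{2}}$, and these are controlled because the good term $c f^{n/2+1}$ dominates. Combined with the Sobolev inequality of Saloff-Coste type on $B^g(p,R)$ (scale-invariant, with exponent $\chi=\frac{n}{n-2}$) and averaged integrals, iterating over the exponents $q_k=b_1\chi^k$ gives part (1). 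The iteration can only start once the exponent is large enough to absorb the cutoff and lower-order terms, and this is why $b_1\ge n^3$ is required.

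The main obstacle is part (2), which lowers the exponent from $b_1$ to $\tfrac n2$. I would use a standard interpolation: the estimate from (1) holds on every pair of concentric balls inside $B^g(p,\tfrac{3R}{4})$. Applying Young's inequality to $(\fint f^{b_1})^{1/b_1}\le \|f\|_\infty^{1-\frac{n}{2b_1}}(\fint f^{n/2})^{\frac{1}{b_1}}$ and iterating over radii (the Giaquinta absorption lemma) removes the $L^\infty$ factor. Doubling makes the volume ratios of the nested balls harmless. If the constant degenerates in this step, the fallback is to run the Moser iteration directly from $q=\tfrac n2$ using the good absorbing term $c f^{n/2+1}$, as Kotschwar--Ni do. That route is scale-invariant, precisely because $\int f^{n/2}$ is scale invariant in dimension $n$.
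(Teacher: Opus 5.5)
Your proposal follows the paper's proof. Part (1) is the Wang--Zhang Moser iteration (Bochner formula for the linearized $n$-Laplacian plus the Saloff-Coste Sobolev inequality) applied to $f=|\nabla^g u|_g^2$ with $u=\mp(n-1)\ln G$ solving $\Delta_n^g u=\pm|\nabla^g u|_g^n$, which the paper simply quotes; part (2) is the same interpolation argument, namely applying (1) on balls $B^g(x,(1-\sigma)R)$ centered at points of the smaller ball, comparing volumes by Bishop--Gromov, bounding $\fint f^{b_1}\le(\sup f)^{b_1-\frac n2}\fint f^{\frac n2}$, and finishing with Young's inequality and a hole-filling lemma. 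Your fallback of iterating directly from the exponent $\frac n2$ is unnecessary and would not work as stated, because absorbing the drift and $|\nabla f|f^{\frac{n-1}{2}}$ terms into the good term $f^{\frac n2+1}$ requires a large starting exponent.
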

\begin{proof}
    By conformal invariance of the $n$-Laplace operator, there holds
    \[
        \Delta_n^g G=\Delta_n G=0,\quad \text{in } \mathbb{R}^n\setminus B(0,e).
    \]

    Define $u:=-(n-1)\ln G$, by a straightforward computation, it follows
    \[
        \mathrm{div}^g(|\nablag u|_g^{n-2}\nablag u)=|\nablag u|_g^n.
    \]
    Moreover, notice that $f=|\nabla^g u|_g^2$,  a Moser iteration argument as that in \cite[Theorem 1.1]{WZ} would derive the first assertion   verbatim (see \cite[(2.18)]{WZ}). 

    Now we shall deduce the second assertion.
    We claim that for any $\sigma\in (0,1)$, there holds
    \[
        |f|_{L^{\infty}(B^g(y,\sigma R))}\leq 
        \frac{C(n)}{(1-\sigma)^2}
        \left(
            \fint_{B^g(y,R)}f^{\frac{n}{2}}\dd V_g
            \right)^{\frac{2}{n}},
            \quad \text{provided } B^g(y,R)\subset \mathbb{R}^n\setminus B(0,e),
    \]
    from which the assertion $(2)$ follows.

    In fact, fix some $B^g(y,R)\subset \mathbb{R}^n\setminus B(0,e)$. Notice that $B^g(x,(1-\sigma)R)\subset B^g(y,R)\subset \mathbb{R}^n\setminus B(0,e)$ if $x\in B^g(y,\sigma R)$, the assertion $(1)$ implies
    \begin{align*}
        |f(x)|&\leq C(n) \left(
            \fint_{B^g(x,\frac{3}{4}(1-\sigma)R)}f^{b_1}\dd V_g
            \right)^{\frac{1}{b_1}}\\
            &\leq C(n)\left(
            \frac{\operatorname{vol}_g(B^g(y,R))}{\operatorname{vol}_g(B^g(x,\frac{3}{4}(1-\sigma)R))}
            \fint_{B^g(y,R)}f^{b_1}\dd V_g
            \right)^{\frac{1}{b_1}}\\
            &=C(n)\left(
            \frac{\operatorname{vol}_g(B^g(y,R))}{\operatorname{vol}_g(B^g(x,(1+\sigma)R)}\,
            \frac{\operatorname{vol}_g(B^g(x,(1+\sigma)R)}{\operatorname{vol}_g(B^g(x,\frac{3}{4}(1-\sigma)R))}
            \fint_{B^g(y,R)}f^{b_1}\dd V_g
            \right)^{\frac{1}{b_1}}\\
            &\leq C(n)\frac{1}{(1-\sigma)^{\frac{n}{b_1}}}\left(
            \fint_{B^g(y,R)}f^{b_1}\dd V_g
            \right)^{\frac{1}{b_1}},
    \end{align*}
    where in the last line, we used the fact that $B^g(y,R)\subset B^g(x,(1+\sigma)R)$ and the Bishop-Gromov volume comparison theorem.
    It follows from Young's inequality that
    \begin{align*}
        \sup _{B^g(y,\sigma R)}|f|
        &\leq \frac{C(n)}{(1-\sigma)^{\frac{n}{b_1}}} \left(
            \fint_{B^g(y,R)}f^{b_1}\dd V_g
        \right)^{\frac{1}{b_1}}\\
        &\leq \frac{C(n)}{(1-\sigma)^{\frac{n}{b_1}}}
        (\sup_{B^g(y,R)}|f|)^{1-\frac{n}{2b_1}}
        \left(
            \fint_{B^g(y,R)}f^{\frac{n}{2}}\dd V_g
        \right)^{\frac{1}{b_1}}\\
        &\leq \frac{1}{2}\sup_{B^g(y,R)} |f|
        +\frac{n}{2b_1}
        \left(2-\frac{n}{b_1}\right)
        ^{\frac{2b_1}{n}(1-\frac{n}{2b_1})}
        \frac{C(n)}{(1-\sigma)^2}
        \left(
            \fint_{B^g(y,R)}f^{\frac{n}{2}}\dd V_g
        \right)^{\frac{2}{n}}.
    \end{align*}
    Then a standard hole filling iteration lemma (see, for example, \cite[Lemma 3]{BC}) implies that 
    \[
        \sup_{B^g(y,\sigma R)}|f|\leq \frac{C(n)}{(1-\sigma)^2}
        \left(
            \fint_{B^g(y,R)}f^{\frac{n}{2}}\dd V_g
        \right)^{\frac{2}{n}}.
    \]
\end{proof}

\begin{proof}[Proof of \cref{proposition. Harnack ineq.}]
    Define $G(x):=\ln |x|$. Consider $f\:=|\nabla^g \ln G|_g^2$.
    By the conformal invariance of the $n$-Dirichlet energy in $n$-dimensional space, there holds
    \begin{align*}
        \int_{\mathbb{R}^n\setminus B(0,e)}f^{\frac{n}{2}}\dd V_g
        &=\int_{\mathbb{R}^n\setminus B(0,e)} |\nabla^g \ln G|_g^n\dd V_g
        =\int_{\mathbb{R}^n\setminus B(0,e)} |\nabla \ln G|^n\dx\\
        &=|\mathbb{S}^{n-1}|\int_e^{+\infty}\frac{1}{t(\ln t)^n}\dd t
        =|\mathbb{S}^{n-1}|\int_1^{+\infty}\frac{1}{t^n}\dd t\\
        &=\frac{|\mathbb{S}^{n-1}|}{n-1}<+\infty.
    \end{align*}
    Therefore, there exists a function $\eta(r)$ satisfying $0\leq \eta(r)\leq \frac{|\mathbb{S}^{n-1}|}{n-1}$, $\eta(r)\to0$ as $r\to +\infty$ and
    \begin{equation}\label{ineq. smallness of integral of f to n/2}
        \int_{\mathbb{R}^n\setminus B^g(0,r)} f^{\frac{n}{2}}\dd V_g
        \leq \eta(r).
    \end{equation}
    It follows from \cref{lemma. weak Harnack ineq. for grad G} and $\beta>0$ that, for 
    $ B^g(p, 2r)\subset \mathbb{R}^n\setminus B(0,e)$,
    \begin{align}\label{ineq. upper bound of gradient of ln G}
        \sup_{B^g(p,r)}|\nabla^g \ln G|_g
    =(\sup_{B^g(p,r)} f)^{\frac{1}{2}}
    &\leq C(\beta,n) r^{-1} (\int_{B^g(p,\frac{3}{2}r)} f^{\frac{n}{2}}\dd V_g )^{\frac{1}{n}}.
    \end{align}

    For fixed $r\gg 1$, assume $x_0\in \partial B^g(0,r)$ satisfying $G(x_0)=s(r)$, and $y_0\in \partial B^g(0,\frac{r}{2})$ satisfying $G(y_0)=i(\frac{r}{2})$.
 By \cite[Proposition 4.5]{HK}, there exists a constant $C_0=C_0(n,\beta)$, such that $B^g(0,r)\setminus B^g(0,\frac{r}{2})$ is connected in $B^g(0,C_0r)\setminus B^g(0,\frac{r}{2C_0})$. Then by Bishop-Gromov volume comparison theorem, there exists some constant $N=N(n,C_0)$ such that there are at most $N$ disjoint geodesic balls each with radius $\frac{r}{4}$ contained in $B^g(0,C_0r)\setminus B^g(0,\frac{r}{2C_0})$. Therefore, there exists a curve $\gamma_0$ and  ($N+2$) geodesic balls $B^g(p_i,\frac{r}{4})
    \subset B^g(0,C_0r)\setminus B^g(0,\frac{r}{2C_0})$, such that $\gamma_0\subset \cup_{i=1}^{N+2}B^g(p_i,\frac{r}{4})$ and $\mathrm{Length}(\gamma_0)\leq \frac{r}{2}(N+2)$.
    Then we get from \cref{ineq. upper bound of gradient of ln G} and  \cref{ineq. smallness of integral of f to n/2} that
    \begin{align*}
        \ln s(r)-\ln i\left(\frac{r}{2}\right)
        &=\ln G(x_0)-\ln G(y_0) \\ \notag
        &\leq \int_{\gamma_0}|\nabla^g\ln G|_g\dd l_{\gamma_0}                \leq \sum_{i=1}^{N+2}\frac{r}{2}\sup_{B^g(p_i,\frac{r}{4})}|\nabla^g \ln G|_g\\
        &\leq \frac{r}{2}C(\beta,n)r^{-1}\sum_{i=1}^{N+2}
    \left(\int_{B^g(p_i,\frac{3}{8}r)} f^{\frac{n}{2}}\dd V_g \right)^{\frac{1}{n}}\\
    &\leq C(\beta,n)(N+2)
    \left(\int_{B^g(0,(C_0+1)r)\setminus B^g(0,\frac{1}{2(C_0+1)}r)} f^{\frac{n}{2}} \dd V_g \right)^{\frac{1}{n}}\\
    &\leq C(\beta,n)\, \eta^{\frac{1}{n}}\left(\frac{r}{2(C_0+1)}\right).
    \end{align*}
    Combined with \cref{lemma. nondecreasing of i(r)} and the decay of $\eta(r)$, the above estimate yields that
    \[
        s(r)\leq i\left(\frac{r}{2}\right)e^{C\eta^{\frac{1}{n}}
        \left(\frac{r}{2(C_0+1)}\right)}
        \leq i(r)e^{o(1)},\quad \text{as } r\to +\infty.
    \]
\end{proof}
\begin{remark}
    Recall that the Harnack inequality for a nonnegative $n$-harmonic function $u$ on $B(0,R)\subset\mathbb{R}^n$ states as: there exists a constant $C(n)$ such that
    \[
        \sup_{B(0,R/2)}u\leq C(n)\inf_{B(0,R/2)}u,
    \]
    see, for example, \cite[Theorem 2.20]{Lin}.
    In contrast, \cref{proposition. Harnack ineq.} claims an asymptotically sharp constant $e^{o(1)}$ for the $n$-harmonic function $\ln |x|$ on the conformal flat manifold $(\mathbb{R}^n,e^{2w}g_0)$ with nonnegative Ricci curvature.
\end{remark}

\subsection{Lower bound by $m$}
\begin{proposition}\label{proposition. second part}
    Let $(\mathbb{R}^n,e^{2w}g_0), (n\geq 3)$ be a complete Riemannian manifold with nonnegative Ricci curvature and $\beta>0$.
    Then 
    \[
        \liminf_{x\to \infty}\frac{\ln r(x)}{\ln |x|}\geq 1-m.
    \]

\end{proposition}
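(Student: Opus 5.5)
The plan follows the two-step outline announced at the start of this section. First I get a lower bound on the $g$-volume of Euclidean balls from the Ma--Qing pointwise lower bound on $w$. Then I convert that volume bound into a distance bound using Bishop--Gromov together with \cref{proposition. Harnack ineq.}.

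\textbf{Step 1: volume of Euclidean balls.} By \cref{def of m}, for every $\epsilon>0$ there is $R_\epsilon$ such that
\[
w(x)\ge -(m+\epsilon)\ln|x| \qquad \text{for } |x|\ge R_\epsilon.
\]
Hence, for large $\rho$,
\[
\operatorname{vol}_g(B(0,\rho))\ \ge\ \int_{R_\epsilon\le |x|\le \rho} |x|^{-n(m+\epsilon)}\dx\ \ge\ c_\epsilon\,\rho^{\,n(1-m-\epsilon)}.
\]
When $m+\epsilon\ge 1$ the bound is useless, but then the target inequality is trivial for $m=1$. So I assume $m<1$, which is in any case forced by $\beta>0$ and \cref{eq. volume-ratio formula}.

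\textbf{Step 2: comparing $B(0,\rho)$ with a geodesic ball.} Fix $x$ with $|x|$ large and set $R=r(x)$. By \cref{lemma. nondecreasing of i(r)}, the sup of $\ln|y|$ over the closed geodesic ball $\overline{B^g(0,R)}$ is attained on $\partial B^g(0,R)$ and equals $e^{s(R)}$ in the $|y|$ scale. I then claim
\[
B(0,e^{i(R)})\subset B^g(0,R).
\]
To see this, suppose $|y|<e^{i(R)}$ but $r(y)\ge R$. Then $y$ lies on $\partial B^g(0,r(y))$, and monotonicity of $i$ gives $\ln|y|\ge i(r(y))\ge i(R)$, a contradiction. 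Combining this inclusion with Step 1 and Bishop--Gromov ($\operatorname{vol}_g B^g(0,R)\le\omega_nR^n$) gives
\[
c_\epsilon\, e^{n(1-m-\epsilon)\,i(R)}\ \le\ \omega_n R^n,
\]
that is,
\[
i(R)\le \frac{\ln R + C_\epsilon}{1-m-\epsilon}.
\]

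\textbf{Step 3: passing from $i$ to $\ln|x|$.} Since $x\in\partial B^g(0,R)$, we have $\ln|x|\le s(R)$. By \cref{proposition. Harnack ineq.}, $s(R)\le e^{o(1)}\,i(R)$. Therefore
\[
\ln|x|\le e^{o(1)}\,\frac{\ln r(x)+C_\epsilon}{1-m-\epsilon}.
\]
Dividing by $\ln|x|$ and using $r(x)\to\infty$ as $|x|\to\infty$ (completeness), then letting $x\to\infty$ and finally $\epsilon\to0$, yields
\[
\liminf_{x\to \infty}\frac{\ln r(x)}{\ln |x|}\geq 1-m.
\]

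\textbf{Main obstacle.} The only genuinely delicate point is Step 3. A crude Harnack constant $C$ in $s\le C\,i$ would lose a multiplicative factor in the exponent. The refined bound $s\le i\,e^{o(1)}$ is exactly what makes the comparison sharp, and it is supplied by \cref{proposition. Harnack ineq.}.

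Everything else is routine. The inclusion in Step 2 relies only on monotonicity of $i$, and the region $|x|<R_\epsilon$ contributes only a constant.
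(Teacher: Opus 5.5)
Your proof is correct and follows the paper's argument essentially step by step. Both use the inclusion $B(0,e^{i(R)})\subset B^g(0,R)$, a lower bound on the $g$-volume of Euclidean balls coming from a lower bound on $w$, Bishop--Gromov, and then the refined Harnack inequality $s\le i\,e^{o(1)}$ to pass from $i$ to $\ln|x|$. There are two cosmetic differences: you use the liminf definition of $m$ directly, losing an $\epsilon$ that you remove at the end, where the paper invokes the sharper Ma--Qing bound $w\geq -m\ln|x|-C$; and you justify the inclusion via monotonicity of $i$, which the paper leaves as ``easy to see''.
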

\begin{proof}   
There is nothing to prove if $m=1$. So we assume $0\leq m<1$. $C$ is a constant which may be different from line to line and is independent of $r$.
    
Define $\underline{\rho}(r):=\inf_{\partial B^g(0,r) }|x|=e^{i(r)}$, where $i(r):=\inf_{\partial B^g(0,r)}\ln |x| $. It is easy to see that 
    $B(0,\underline{\rho}(r))\subset B^g(o,r)$. Then it follows that
    \begin{equation}\label{eq.subset volume relation}
        \operatorname{vol}_g(B(0,\underline{\rho}(r))
        \leq \operatorname{vol}_g(B^g(0,r)).
    \end{equation}

    On the one hand, by \cite[Theorem 1.3]{MQ}, there exist some constants C and $\rho_0$ such that
    \begin{equation}\label{ineq. pointwise lower bound of phi}
        w(x)\geq -m\ln |x|-C,\quad \forall \ |x| \geq \rho_0.
    \end{equation}
    Therefore, for $\rho(r)> \rho_0$ we have
    \begin{align*}
        \operatorname{vol}_g
        (B(0,\underline{\rho}(r))
        &=\int_{B(0,\underline{\rho}(r))\setminus B(0,\rho_0)} e^{nw(x)}\dx
        +\int_{B(0,\rho_0)} e^{nw(x)}\dx\\
        &\geq |\mathbb{S}^{n-1}|\int_{\rho_0}^{\underline{\rho}(r)}C t^{-mn}t^{n-1}\dd t
        \\
        &\geq
            C\underline{\rho}(r)^{n(1-m)}-C \quad\quad(\ \text{since } 0\leq m<1), 
    \end{align*}
    
    On the other hand, by Bishop-Gromov volume comparison, it follows that
    \[
        \operatorname{vol}_g(B^g(0,r))\leq Cr^n,\quad \forall r>0.
    \]
    Insert these into \cref{eq.subset volume relation} to get
    \[
        C\underline{\rho}(r)^{n(1-m)}-C\leq C r^n.
    \]
    Therefore,
    \[
        \frac{n\ln r+C}{i(r)}\geq \frac{\ln(C\underline{\rho}(r)^{n(1-m)}-C)}{i(r)}=\frac{\ln (e^{n(1-m)i(r) }-C)+C}{i(r)}
    \]
Taking the limit inferior, we obtain
    \[
        \liminf_{r\to \infty}\frac{\ln r}{i(r)}\geq 1-m.
    \]

Finally we apply \cref{proposition. Harnack ineq.} to complete the proof as follows:
    \[
        \liminf_{x\to\infty}\frac{\ln r(x)}{\ln |x|}\geq 
        \liminf_{r\to \infty}\frac{\ln r}{s(r)}
        \geq \liminf_{r\to \infty}\frac{\ln r}{i(r)e^{o(1)}}\geq 1-m.
    \]
\end{proof}

\subsection{Upper bound by $m$ }
\begin{proposition}\label{proposition. third part}
    Let $(\mathbb{R}^n,e^{2w}g_0), (n\geq 3)$ be a complete Riemannian manifold with nonnegative Ricci curvature and $\beta>0$.
    Then 
    \[
        \limsup_{x\to \infty}\frac{\ln r(x)}{\ln |x|}\leq 1-m.
    \]
\end{proposition}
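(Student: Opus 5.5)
We mirror the proof of \cref{proposition. second part}, reversing the roles of the inner and outer Euclidean radii and replacing the pointwise lower bound on $w$ by an integral upper bound. Set $\overline{\rho}(r):=\sup_{\partial B^g(0,r)}|x|=e^{s(r)}$. By \cref{lemma. nondecreasing of i(r)} and the maximum principle, the Euclidean ball $B(0,\overline{\rho}(r))$ contains $B^g(0,r)$. Hence
\[
\operatorname{vol}_g(B^g(0,r))\leq \operatorname{vol}_g(B(0,\overline{\rho}(r)))=\int_{B(0,\overline{\rho}(r))}e^{nw}\dx .
\]
Since $\beta>0$, the left side is bounded below by $\beta\omega_n r^n$.

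The heart of the argument is an upper volume bound on Euclidean balls: for every $\varepsilon>0$,
\begin{equation}\label{eq:plan-upper-vol}
\int_{B(0,\rho)}e^{nw}\dx\leq C_\varepsilon\,\rho^{n(1-m)+\varepsilon}\qquad \text{for all large }\rho.
\end{equation}
We will obtain \cref{eq:plan-upper-vol} following \cite[Theorem 3.1, Proposition 4.1]{MQ}. In Ma--Qing's framework, $\operatorname{Ric}_g\ge 0$ makes $w$ (after the standard transformation) satisfy a differential inequality whose Riesz measure $\mu$ has total mass controlled by $m$. Writing $w$ as $-m\ln|x|$ plus a logarithmic potential of $\mu$, plus a bounded remainder, the exceptional set where $w>(-m+\varepsilon)\ln|x|$ is $n$-thin at infinity. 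On the complement of this set, $e^{nw}\le |x|^{-nm+n\varepsilon}$, and integrating gives $\rho^{n(1-m)+n\varepsilon}$. On the thin set, we will combine a Wolff-potential estimate with Jensen/Moser--Trudinger-type exponential integrability of potentials of small mass. The thinness of the set provides the small mass, so the contribution of the thin set is at most $C\rho^{n(1-m)+n\varepsilon}$. This step is the main obstacle, since no pointwise upper bound for $w$ is available. We will have to check carefully that the exponential integrability constant stays uniform on the dyadic annuli $B(0,2\rho)\setminus B(0,\rho)$. We will first try this annulus-by-annulus, using the rescaling $w_\rho(y)=w(\rho y)+m\ln\rho$ so that the mass of $\mu$ on each annulus tends to zero.

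Granting \cref{eq:plan-upper-vol}, we get $\beta\omega_n r^n\le C_\varepsilon e^{(n(1-m)+\varepsilon)s(r)}$. Hence $n\ln r\le (n(1-m)+\varepsilon)s(r)+C$, so $\limsup_{r\to\infty}\ln r/s(r)\le 1-m+\varepsilon/n$, and we let $\varepsilon\to0$. Finally, for $x\in\partial B^g(0,r)$ we have $\ln|x|\ge i(r)\ge s(r)e^{-o(1)}$ by \cref{proposition. Harnack ineq.}. Therefore
\[
\limsup_{x\to\infty}\frac{\ln r(x)}{\ln|x|}\le\limsup_{r\to\infty}\frac{\ln r}{i(r)}\le \limsup_{r\to\infty}\frac{\ln r}{s(r)e^{-o(1)}}\le 1-m,
\]
which is the claimed bound.
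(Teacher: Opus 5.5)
Your proposal follows the paper's proof essentially step for step: the inclusion $B^g(0,r)\subset B(0,e^{s(r)})$, the lower bound $\operatorname{vol}_g(B^g(0,r))\ge \beta\omega_n r^n$, a dyadic-annulus bound $\int_{B(0,2R)\setminus B(0,R)}e^{nw}\dx\le CR^{n(1-m)+\epsilon}$ obtained from a Wolff-potential estimate plus Jensen-type exponential integrability of small-mass potentials, and finally \cref{proposition. Harnack ineq.} to pass from $s(r)$ to $i(r)$. When you write out the key lemma, two corrections to your sketch. First, since $\Delta_n$ is nonlinear there is no literal representation of $w$ as $-m\ln|x|$ plus a logarithmic potential; the paper instead works after a Kelvin transform and combines the Kilpel\"ainen--Mal\'y pointwise bound with Ma--Qing's asymptotics for $\inf_{B(y,\alpha|y|)}w_0$. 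Second, the small mass comes from $f\in L^1$ near the singular point (the finite tail mass of the Riesz measure), not from thinness of the exceptional set; the estimate therefore holds on the whole annulus, and no thin-set splitting is needed.
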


Since there is no pointwise upper bound of the conformal factor $w$ like the lower bound \cref{ineq. pointwise lower bound of phi}, the argument in \cref{proposition. second part} could not be applied directly to  prove \cref{proposition. third part}.
Instead, inspired by Ma-Qing's work on the asymptotic upper bound of $w$ out of some exceptional set (see \cite[Theorem 3.1]{MQ}), we shall utilize several deep results in nonlinear potential theory to derive an upper bound of the $g$-volume of Euclidean balls.

We start with the main technical lemma, whose main part is a modification of \cite[proposition 4.1]{MQ}.

\begin{lemma}\label{lemma. integral of exp(n phi)}
    Let $(\mathbb{R}^n,e^{2w}g_0)$ be a complete Riemannian manifold with nonnegative Ricci curvature.
    Then for any $\epsilon>0$, there exist some constants $R_0>0$ and $C=C(n)>0$ such that
    \[
        \int_{B(0,2R)\setminus B(0,R)} e^{nw(x)}\dx 
        \leq C(n)R^{n(1-m)+\epsilon},\quad \forall R\geq R_0.
    \]
\end{lemma}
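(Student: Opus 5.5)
Since $w$ has no pointwise upper bound, the plan is to control $e^{nw}$ on the annulus in an averaged sense, following Ma--Qing's $n$-potential description of $w$. Their setup uses two facts. First, $\operatorname{Ric}\ge 0$ implies $R_g\ge 0$. Second, the conformal change formula for the scalar curvature makes $w$ satisfy a differential inequality of $n$-Laplacian type. Concretely, following \cite[Section 3]{MQ}, one shows that $-\Delta_n w$ defines a nonnegative Radon measure $\mu$ on $\mathbb{R}^n$, up to lower-order terms controlled by $\operatorname{Ric}\ge0$. One also shows that $\mu(\mathbb{R}^n)$ is finite and encodes $m$, in the sense that $\mu(B(0,r))^{1/(n-1)}$ tends to a constant multiple of $m$ (normalized so that $\ln|x|$ has exponent $1$).

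With this in hand, the key steps are the following.

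Step 1. Compare $w$ on $B(0,4R)\setminus B(0,R/2)$ with a Wolff potential, using the Kilpeläinen--Malý pointwise estimates for $n$-superharmonic functions:
\[
w(x)\le \inf_{B(x,R)}w + C\,W^{\mu}_{1,n}(x,2R),\qquad W^{\mu}_{1,n}(x,r)=\int_0^{r}\Bigl(\frac{\mu(B(x,t))}{t^{n-n}}\Bigr)^{\frac1{n-1}}\frac{dt}{t}.
\]

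Step 2. Bound the infimum term. The lower bound $w\ge -m\ln|x|-C$ from \cite[Theorem 1.3]{MQ}, together with the definition of $m$ as a $\liminf$, allows one to choose points on the relevant spheres where $w\le (-m+\epsilon/2n)\ln R$. Harnack-type control for $n$-superharmonic functions then transfers this to give
\[
\inf_{B(x,R)}w\le (-m+\epsilon/2n)\ln R \quad\text{for } R\ge R_0 .
\]

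Step 3. Split the Wolff potential into a far part and a local part. The far part, $t\in(\delta R,2R)$, is bounded by $C\,\mu(\mathbb{R}^n)^{1/(n-1)}\ln(1/\delta)$, which is a constant. For the local part, $t<\delta R$, one needs an exponential integrability estimate. The plan is to use the Adams-type inequality for Wolff potentials of measures with small local mass (as in \cite[Proposition 4.1]{MQ}):
\[
\int_{B}\exp\bigl(\lambda\, W^{\mu}_{1,n}(x,\delta R)\bigr)\,dx\le C R^n
\quad\text{whenever } \sup_{x}\mu(B(x,\delta R))\ \text{is small relative to }\lambda^{-(n-1)} .
\]
Since $\mu$ has finite total mass, its mass outside $B(0,R/4)$ tends to zero. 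This makes the smallness hold on the annulus with $\lambda=n$ once $R$ is large.

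Step 4. Combine. From Steps 1--3,
\[
\int_{B(0,2R)\setminus B(0,R)}e^{nw}\,dx\le e^{n(-m+\epsilon/2n)\ln R}\,e^{C}\int e^{nC W}\,dx\le C R^{n(1-m)+\epsilon}.
\]
The $\epsilon$ absorbs the slack both in the infimum estimate and in the constant from the far part.

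The main obstacle is Step 3. It requires verifying that the relevant measure is genuinely nonnegative with finite mass near infinity, which depends on how the $\operatorname{Ric}\ge0$ condition enters the $n$-Laplacian reformulation. It also requires tracking how the exponential-integrability constant depends on the local mass. If the smallness cannot be made uniform over the whole annulus, I would instead cover the annulus by $O(1)$ balls of radius $\delta R$ and apply the Adams-type bound on each ball separately.
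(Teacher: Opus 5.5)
\textbf{The gap is in Step 2.} The definition $\liminf_{x\to\infty}w(x)/\ln|x|=-m$ only produces a sequence $x_k\to\infty$ with $w(x_k)\le(-m+\epsilon)\ln|x_k|$. That gives good spheres only along a possibly very sparse set of radii; nothing prevents $|x_{k+1}|\ge e^{|x_k|}$. Step 4, however, needs $\inf w\le(-m+\epsilon')\ln R$ near \emph{every} annulus $\{R\le|x|\le 2R\}$, $R\ge R_0$. Harnack-type estimates for $n$-superharmonic functions only compare comparable scales, and each step costs a fixed constant. So they cannot carry an $o(\ln R)$-sharp bound across such gaps.

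The missing ingredient is a statement valid at all scales, and you can get it directly. Set $\phi(t):=\min_{|x|=e^{t}}w$. Since $w$ is $n$-superharmonic on $\mathbb{R}^n$, compare it on $B_{e^{t_2}}\setminus\overline{B_{e^{t_1}}}$ with the radial $n$-harmonic function $A+B\ln|x|$ equal to $\phi(t_i)$ on the two boundary spheres. This shows $\phi$ is concave. Concavity, together with $\phi(t)\ge-mt-C$ and $\liminf_{t\to\infty}\phi(t)/t=-m$, forces $\phi(t)/t\to-m$. Now apply Kilpel\"ainen--Mal\'y on $B(x,12R)$. For $R\le|x|\le2R$ the inner ball $B(x,4R)$ contains $\partial B_R$, so $\inf_{B(x,4R)}w\le\phi(\ln R)\le(-m+\epsilon')\ln R$ with no chaining needed. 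The paper obtains the same information by inverting and quoting [MQ, Lemma 3.7, Theorem 3.1] with $m_1=2-m$.

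Your Step 1 is also misquoted. Kilpel\"ainen--Mal\'y applied to the nonnegative function $w-\inf_{B(x,3r)}w$ gives $w(x)\le\inf_{B(x,3r)}w+c_2\bigl(\inf_{B(x,r)}w-\inf_{B(x,3r)}w\bigr)+c_3W^{\mu}_{1,n}(x,2r)$, not your version with coefficient one on $\inf_{B(x,r)}w$. Thanks to $w\ge-m\ln|x|-C$, the extra term is at most $c_2(\epsilon'\ln R+C)$, so this is harmless.

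What you flag as the main obstacle is in fact easy. Nonnegativity of $\mu=-\Delta_n w$ is the exact identity $-\Delta_n w=\mathrm{Ric}_g(\nu,\nu)|\nabla w|^{n-2}e^{2w}$ with $\nu=\nabla^g w/|\nabla^g w|_g$ [MQ, (2.18)]. It uses Ricci curvature (scalar curvature alone does not suffice) and has no lower-order terms. Finiteness follows from the lower Kilpel\"ainen--Mal\'y bound at the origin: for $r_0<R$,
\[
c_1\mu(B_{r_0})^{1/(n-1)}\ln(R/r_0)\le c_1W^{\mu}_{1,n}(0,R)\le w(0)-\inf_{B(0,3R)}w\le m\ln R+C,
\]
so $\mu(\mathbb{R}^n)\le(m/c_1)^{n-1}$. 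The tail $\mu(\mathbb{R}^n\setminus B_{R/2})\to0$ is all Step 3 needs; the exact relation between $\mu(\mathbb{R}^n)$ and $m$ is irrelevant. The exponent in the Adams-type bound is $nc_3$, not $n$, which is fine for the same reason.

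With these repairs your route is a genuine alternative to the paper's. The paper inverts, $w_0(y)=w(y/|y|^2)-2\ln|y|$, and uses [CHY04] to get $w_0\to+\infty$ at $0$. It then uses Bidaut-V\'eron to write $-\Delta_n w_0=f+\delta\delta_0$ with $f\in L^1$. This places it in the isolated-singularity setting of [MQ] and gives small local mass $\mu_f(B(0,2/R))\to0$. It then proves the exponential integrability of $e^{nC_2W}$ from scratch, via Jensen and the weak-type maximal inequality. Staying at infinity avoids all of this inversion machinery. The price is that you must prove the all-scales infimum asymptotics and the tail smallness by hand, and you cite the Adams-type estimate that the paper actually proves.
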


\begin{proof}
Let $w_0(y):=w(\frac{y}{|y|^2})-2\ln|y|$, it follow that $w_0\in C^{\infty}(\mathbb{R}^n\setminus\{0\})$. Then $g=e^{2w(x)}|\dx|^2=e^{2w_0(y)}|\dy|^2 $, where $x=\frac{y}{|y|^2}$. By \cite[(2.18)]{MQ}, it follows that
    \begin{equation}\label{eq. equation of w}
        -\Delta_n w_0
        =\mathrm{Ric}_g
        \left(\frac{\nabla^g w_0}{|\nabla^g w_0|_g},\frac{\nabla^g w_0}{|\nabla^g w_0|_g}\right)
            |\nabla w_0|^{n-2}e^{2w_0}=:f
            \quad \text{on } \mathbb{R}^n\setminus\{0\}.
    \end{equation}
    
    Since $g=e^{2w_0}|\dy|^2 $ is complete at the origin and its scalar curvature is nonnegative, we know from
    \cite[Proposition 8.1]{CHY04} that
    \[
        \lim_{y\to 0} w_0(y)=+\infty.
    \]
    Then, from \cite[proposition 1.1]{B-V89} (see also \cite[Theorem 2.4]{MQ}), we know that $f\in L^1(B(0,2))$ and there exists a constant $\delta\geq 0$ such that 
    \[
        -\Delta_n w=f+\delta\delta_0 \quad \text{on } B(0,2)
    \]
    in the distributional sense.
    Therefore we could define a nonnegative Radon measure $\mu$ as $\mu:=\mu_f+\delta \delta_0$, where
    \[
        \mu_f(\Omega):=
        \int_{\Omega}f\dx,\quad \forall\, \Omega\subset  B(0,2).
    \]
    Then we could interpret \cref{eq. equation of w} as
    \[
        -\Delta_n w_0=\mu\geq 0 \quad \text{on } B(0,2).
    \]
    Hence we could apply \cite[Lemma 3.7, Theorem 3.1]{MQ} and obtain that, there exists a constant $m_1$ such that, for any fixed $\alpha\in (0,1)$, there holds
    \begin{equation}\label{eq. lemma 3.7 of MQ}
        \lim_{y\to 0}\frac{\inf_{B(y,\alpha|y|)}w_0}{\ln\frac{1}{|y|}}
        =m_1.
    \end{equation}
    It turns out that $m_1=2-m\in [1,2]$ (see \cite[(5.5),(5.6)]{MQ}). 

    Recall the Wolff potential
    \[
        W_{1,n}^{\mu}(x,r):=\int_0^r \mu(B(x,t))^{\frac{1}{n-1}}\frac{1}{t}\dd t.
    \]
    For any fixed $y\in B(0,1)$,  apply the foundational estimates \cite[Theorem 1.6]{KM94} (see also \cite[Theorem 2.6]{MQ}) to $w-\inf_{B(y,\frac{3}{4}|y|)}w$ in $B(y,\frac{3}{4}|y|)$, we obtain: there exists some constants $C_1(n)>0,\ C_2(n)>0$ such that
    \begin{equation}\label{ineq. KM94 upper bound}
        \frac{w_0(y)}{\ln\frac{1}{|y|}}\leq
        \frac{\inf_{B(y,\frac{3}{4}|y|)}w_0}{\ln\frac{1}{|y|}}
        +C_1 \left(
        \frac{\inf_{B(y,\frac{1}{4}|y|)}w_0}{\ln\frac{1}{|y|}}
        -\frac{\inf_{B(y,\frac{3}{4}|y|)}w_0}{\ln\frac{1}{|y|}}
        \right)
        +C_2\frac{W_{1,n}^{\mu}(y,\frac{1}{2}|y|)}{\ln\frac{1}{|y|}}.
    \end{equation}
    It follows from \cref{eq. lemma 3.7 of MQ} and \cref{ineq. KM94 upper bound} that, for any $\epsilon>0$, there exists $R_1>1$ such that
    \[
        \frac{w_0(y)}{\ln\frac{1}{|y|}}\leq
        m_1+\frac{\epsilon}{n}+C_2\frac{W_{1,n}^{\mu}(y,\frac{1}{2}|y|)}{\ln\frac{1}{|y|}},
        \quad \forall\, |y|\leq \frac{1}{R_1}.
    \]
    Equivalently,
    \[
        e^{nw_0(y)}\leq 
        \left(\frac{1}{|y|}\right)^{nm_1+\epsilon}
        e^{nC_2W_{1,n}^{\mu}(y,\frac{1}{2}|y|)},
        \quad \forall\, |y|\leq \frac{1}{R_1}.
    \]
Hence if $R\geq R_1$, we have
    \begin{align} \notag
        \int_{B(0,2R)\setminus B(0,R)}e^{nw(x)}\dx
        &=\int_{B(0,\frac{1}{R})\setminus B(0,\frac{1}{2R})}e^{nw_0(y)}\dy \\ \notag
        &\leq \int_{B(0,\frac{1}{R})\setminus B(0,\frac{1}{2R})}\left(\frac{1}{|y|}\right)^{nm_1+\epsilon}
        e^{nC_2W_{1,n}^{\mu}(y,\frac{1}{2}|y|)}\dy \notag\\
        &\leq (2R)^{nm_1+\epsilon}
        \int_{B(0,\frac{1}{R})\setminus B(0,\frac{1}{2R})} e^{nC_2W_{1,n}^{\mu}(y,\frac{1}{2}|y|)}\dy\notag\\
        &\leq C(n)R^{n(2-m)+\epsilon}\int_{B(0,\frac{1}{R})\setminus B(0,\frac{1}{2R})} e^{nC_2W_{1,n}^{\mu}(y,\frac{1}{2}|y|)}\dy. \label{ineq. integral of exp(n phi)}
    \end{align}
    To complete the proof of the lemma, it suffices to show that, for sufficiently large $R$,
    \begin{equation}\label{ineq. integral of Wolff potential}
        \int_{B(0,\frac{1}{R})\setminus B(0,\frac{1}{2R})} e^{nC_2W_{1,n}^{\mu}(y,\frac{1}{2}|y|)}\dy
        \leq C(n) R^{-n}.
    \end{equation}
    
    In fact, if $f\equiv 0$ near the origin, then $\mu(B(y,\frac{1}{2}|y|))=\mu_f(B(y,\frac{1}{2}|y|))=\int_{B(y,\frac{1}{2}|y|)}f\dx=0$ provided $|y|$ is small enough. It follows that $W_{1,n}^{\mu}(y,\frac{1}{2}|y|)\equiv 0$ and \cref{ineq. integral of Wolff potential} holds trivially.
    
    If $f$ is not identically zero in any neighborhood of the origin, then for any $R\geq R_2:=2$, define a nonnegative measure $\mu_R$ as 
    \[
        \mu_R(\Omega):=\frac{\mu_f(\Omega)}{\mu_f(B(0,\frac{2}{R}))},\quad \forall \,\Omega\subset 
        B\left(0,\frac{2}{R}\right).
    \]
    It is clear that $0\leq\mu_R(\Omega)\leq 1$ for $\Omega\subset B(0,\frac{2}{R})$ and $B(y,\frac{1}{2}|y|)\subset B(0,\frac{2}{R})$ for $y\in B(0,\frac{1}{R})\setminus B(0,\frac{1}{2R})$. Moreover, it is straightforward to see that 
    \[
        W_{1,n}^{\mu_R}(y,\frac{1}{2}|y|)
        =\frac{W_{1,n}^{\mu_f}(y,\frac{1}{2}|y|)}{\mu_f(B(0,\frac{2}{R}))^{\frac{1}{n-1}}}
        =\frac{W_{1,n}^{\mu}(y,\frac{1}{2}|y|)}{\mu_f(B(0,\frac{2}{R}))^{\frac{1}{n-1}}}.
    \]

    For brevity, let $\alpha_y^{n-1}:=\mu_R(B(y,\frac{1}{2}|y|))\in [0,1],$ for $y\in B(0,\frac{1}{R})\setminus B(0,\frac{1}{2R})$.  Then 
    \begin{align*}
        W_{1,n}^{\mu_R}(y,\frac{1}{2}|y|)
        &=\int_0^{\frac{1}{2}|y|}\mu_R(B(y,t))^{\frac{1}{n-1}}\dd \ln t \\ \notag
       & =\big(\mu_R(B(y,t))^{\frac{1}{n-1}}\ln t\big)|_0^{\frac{1}{2}|y|}
        +\int_0^{\frac{1}{2}|y|}\ln \frac{1}{t}\dd \mu_R(B(y,t))^{\frac{1}{n-1}}.
    \end{align*}
    Consider the Hardy-Littlewood maximal function of $f$:
    \[
        Mf(y):=
        \sup_{t>0}\frac{\int_{B(y,t)}f\dx}{|B(y,t)|}
        =\mu_f(B(0,\frac{2}{R}))\sup_{t>0}\frac{\mu_R(B(y,t))}{|B(y,t)|}.
    \]
    Hence there holds
    \[
        \mu_R(B(y,t))\leq \frac{|B(0,1)|}{\mu_f(B(0,\frac{2}{R}))}t^n Mf(y)
    \]
    for almost every $y \in B(0,\frac{1}{R})\setminus B(0,\frac{1}{2R})$. Therefore,
    \[
        \big(\mu_R(B(y,t))^{\frac{1}{n-1}}\ln t\big)|_0^{\frac{1}{2}|y|}
        =\alpha_y\ln(\frac{1}{2}|y|).
    \]
    If $\alpha_y>0$, then by Jensen's inequality,
    \begin{align*}
        & e^{W_{1,n}^{\mu_R}(y,\frac{1}{2}|y|)}
        =(\frac{1}{2}|y|)^{\alpha_y} 
        e^{\int_0^{\frac{1}{2}|y|} \ln \frac{1}{t}\dd \mu_R(B(y,t))^{\frac{1}{n-1}}
        }\\
        \leq &(\frac{1}{2}|y|)^{\alpha_y} \int_0^{\frac{1}{2}|y|}\frac{1}{t^{\alpha_y}}\frac{1}{\alpha_y}\dd \mu_R(B(y,t))^{\frac{1}{n-1}}\\
        \leq &(\frac{1}{2}|y|)^{\alpha_y}
        \left(
        \frac{1}{\alpha_y}\frac{1}{t^{\alpha_y}}\mu_R(B(y,t))^{\frac{1}{n-1}}\Big|_0^{\frac{1}{2}|y|}
        +\int_0^{\frac{1}{2}|y|}\mu_R(B(y,t))^{\frac{1}{n-1}}\frac{1}{t^{\alpha_y+1}}\dd t
        \right)\\
        =&(\frac{1}{2}|y|)^{\alpha_y}
        \left(
        \frac{1}{(\frac{1}{2}|y|)^{\alpha_y}}
        +\int_0^{\frac{1}{2}|y|}\mu_R(B(y,t))^{\frac{1}{n-1}}\frac{1}{t^{\alpha_y+1}}\dd t
        \right)\\
        \leq &(\frac{1}{2}|y|)^{\alpha_y}
        \left(
        \frac{1}{(\frac{1}{2}|y|)^{\alpha_y}}
        +\frac{1}{\frac{n}{n-1}-\alpha_y}
        \left(\frac{|B(0,1)|}{\mu_f(B(0,\frac{2}{R}))}\right)^{\frac{1}{n-1}}
        \Big(Mf(y)\Big)^{\frac{1}{n-1}}
        \Big(\frac{1}{2}|y|\Big)^{\frac{n}{n-1}-\alpha_y}
        \right)\\
        =&1
        +\frac{1}{\frac{n}{n-1}-\alpha_y}
        \left(\frac{|B(0,1)|}{\mu_f(B(0,\frac{2}{R}))}\right)^{\frac{1}{n-1}}
        \Big(Mf(y)\Big)^{\frac{1}{n-1}}
        \Big(\frac{1}{2}|y|\Big)^{\frac{n}{n-1}}.
    \end{align*}
    If $\alpha_y=0$, then $W_{1,n}^{\mu_R}(y,\frac{1}{2}|y|)=0$ and there still holds
    \begin{align*}
        e^{W_{1,n}^{\mu_R}(y,\frac{1}{2}|y|)}
        \leq 1
        +\frac{1}{\frac{n}{n-1}-\alpha_y}
        \left(\frac{|B(0,1)|}{\mu_f(B(0,\frac{2}{R}))}\right)^{\frac{1}{n-1}}
        \Big(Mf(y)\Big)^{\frac{1}{n-1}}
        \Big(\frac{1}{2}|y|\Big)^{\frac{n}{n-1}}.
    \end{align*}
    Hence for $\lambda\geq 2$, the weak type Hardy-Littlewood maximal inequality yields
    \begin{align*}
        &\Big|\Big\{
        y\in B(0,\frac{1}{R})\setminus B(0,\frac{1}{2R}):
        e^{W_{1,n}^{\mu_R}(y,\frac{1}{2}|y|)}\geq \lambda
        \Big\}\Big|\\
        \leq& 
        \bigg|\bigg\{
        y\in B(0,\frac{1}{R})\setminus B(0,\frac{1}{2R}):
        Mf(y)\geq \frac{2\mu_f(B(0,\frac{2}{R}))(\frac{n}{n-1}-\alpha_y)^{n-1}}{|B(0,1)|\ |y|^n}\lambda^{n-1}
        \bigg\}\bigg|\\
        \leq&
        \bigg|\bigg\{
        y\in B(0,\frac{1}{R})\setminus B(0,\frac{1}{2R}):
        Mf(y)\geq \frac{2\mu_f(B(0,\frac{2}{R}))(\frac{1}{n-1})^{n-1}}{|B(0,1)|}R^n\lambda^{n-1}
       \bigg\}\bigg|\\
        \leq&
        \frac{C_3(n)|B(0,1)|}{2\mu_f(B(0,\frac{2}{R}))\, (\frac{1}{n-1})^{n-1}R^n\lambda^{n-1}} 
        \int_{B(0,\frac{1}{R})\setminus B(0,\frac{1}{2R})} f\dx\\
        =& \frac{C_4(n)}{R^n}
        \left(\frac{\int_{B(0,\frac{1}{R})\setminus B(0,\frac{1}{2R})} f\dx}{\mu_f(B(0,\frac{2}{R}))}\right)
        \lambda^{-(n-1)}.
    \end{align*}

    Finally, notice that $f\in L^1(B(0,2))$, there exists $R_3>0$ such that $0<\mu_f(B(0,\frac{2}{R_3}))<(\frac{n-1}{nC_2(n)})^{n-1}$, that is to say, $q:=nC_2(n)\mu_f(B(0,\frac{2}{R_3}))^{\frac{1}{n-1}}<n-1$. Now for $R>\max\{R_2,\ R_3\}$, we could verify \cref{ineq. integral of Wolff potential} as follows:
    \begin{align*}
        &\int_{B(0,\frac{1}{R})\setminus B(0,\frac{1}{2R})} e^{nC_2W_{1,n}^{\mu}(y,\frac{1}{2}|y|)}\dy\\
        =&\int_{B(0,\frac{1}{R})\setminus B(0,\frac{1}{2R})} e^{nC_2 \mu_f(B(0,\frac{2}{R}))^{\frac{1}{n-1}} W_{1,n}^{\mu_R}(y,\frac{1}{2}|y|)}\dy\\
        \leq&\int_{B(0,\frac{1}{R})\setminus B(0,\frac{1}{2R})}
        e^{qW_{1,n}^{\mu_R}(y,\frac{1}{2}|y|)}\dy
        \\
        =&\int_0^{+\infty}
        \Big|\Big\{
        y\in B(0,\frac{1}{R})\setminus B(0,\frac{1}{2R}):
        e^{W_{1,n}^{\mu_R}(y,\frac{1}{2}|y|)}
        \geq 
        t^{\frac{1}{q}}
        \Big\}\Big|\dd t\\
        \leq& \int_{2^{q}}^{+\infty}
        \frac{C_4(n)}{R^n}
         \left(\frac{\int_{B(0,\frac{1}{R})\setminus B(0,\frac{1}{2R})} f\dx}{\mu_f(B(0,\frac{2}{R}))}\right)
         t^{\frac{-(n-1)}{q}}
         \dd t
         +\int_0^{2^{q}}
         |B(0,\frac{1}{R})\setminus B(0,\frac{1}{2R})|\dd t\\
         =&C_4(n)
         \left(\frac{\int_{B(0,\frac{1}{R})\setminus B(0,\frac{1}{2R})} f\dx}{\mu_f(B(0,\frac{2}{R}))}\right)
         \frac{q}{n-1-q}
         \left(\frac{1}{2}\right)^{n-1-q}
         R^{-n}
         +2^{q}
         |B(0,\frac{1}{R})\setminus B(0,\frac{1}{2R})|\\
         \leq& C_5(n)R^{-n}.
    \end{align*}
    Combining with \cref{ineq. integral of exp(n phi)} and \cref{ineq. integral of Wolff potential}, we conclude that $R_0$ could be chosen as $\max\{R_1,R_2,R_3\}+1$ and the proof is completed.
\end{proof}

Now we are ready to prove \cref{proposition. third part}.
\begin{proof}[Proof of \cref{proposition. third part}:]
    Fix $\epsilon>0$, by \cref{lemma. integral of exp(n phi)}, there exists $R_0>0$ such that, for any $R\geq R_0$ and $1\leq i\leq k$, we have
    \[
        \int_{B(0,2^iR)\setminus B(0,2^{i-1}R) }e^{nw(x)}\dx
        \leq C(n)2^{(i-k-1)(n(1-m)+\epsilon)}(2^k R)^{n(1-m)+\epsilon}.
    \]
    Summing from $i=1$ to $k$, we get
    \begin{equation}\label{ineq.integral over 2 to the k}
        \int_{B(0,2^k(R))\setminus B(0,R)}e^{nw(x)}\dx
        \leq C(n)(2^k R)^{n(1-m)+\epsilon}.
    \end{equation}
    Now for any $\tilde{R}\geq 2R_0$, there exists some integer $k\geq 1$ such that $2^{-k}\tilde{R}\in[R_0, 2R_0)$. Therefore, \cref{ineq.integral over 2 to the k} implies that
    \begin{align}
        \int_{B(0,\tilde{R})}e^{nw(x)}\dx
        &=\int_{B(0,\tilde{R})\setminus B(0,2^{-k}\tilde{R}) }
        e^{nw(x)}\dx
        +\int_{B(0,2^{-k}\tilde{R})}
        e^{nw(x)}\dx\notag\\
        &\leq C(n) \tilde{R}^{n(1-m)+\epsilon}
        +\int_{B(0,2R_0)}
        e^{nw(x)}\dx\label{ineq. upper bound of integral of e to the n phi}
    \end{align}

    Define $\bar{\rho}(r):=\sup_{\partial B^g(0,r) }|x|=e^{s(r)}$, where $s(r):=\sup_{\partial B^g(0,r)}\ln |x| $. It is easy to see that 
    $B(0,\bar{\rho}(r))\supset B^g(o,r)$.
    Then it follows that
    \begin{equation}\label{eq.supset volume relation}
        \operatorname{vol}_g(B(0,\bar{\rho}(r))
        \geq \operatorname{vol}_g(B^g(0,r)).
    \end{equation}

    On the one hand, \cref{ineq. upper bound of integral of e to the n phi} yields that for $\bar{\rho}(r)\geq 2R_0$,
    \[
        \operatorname{vol}_g(B(0,\bar{\rho}(r))
        =\int_{B(0,\bar{\rho}(r))} e^{nw(x)}\dx
        \leq
        C(n)\bar{\rho}(r)^{n(1-m)+\epsilon}
        +\int_{B(0,2R_0)}
        e^{nw(x)}\dx. 
    \]
    
    On the other hand, by our assumption that $\beta>0$, there exists a constant $C=C(\beta,n)>0$ such that
    \[
        \operatorname{vol}_g(B^g(0,r))\geq C(\beta,n)r^n,\quad \forall r>0.
    \]
    Inserting these into \cref{eq.supset volume relation}, then we derive that
    \[
        C\bar{\rho}(r)^{n(1-m)+\epsilon}
        +\int_{B(0,2R_0)}
        e^{nw(x)}\dx
        \geq C r^n.
    \]
    Therefore,
    \begin{align*}
        \frac{n\ln r+C}{s(r)}&\leq \frac{\ln(C\bar{\rho}(r)^{n(1-m)+\epsilon}+\int_{B(0,2R_0)}
        e^{nw(x)}\dx)}{s(r)} \\ \notag
        &=\frac{\ln (e^{(n(1-m)+\epsilon)s(r) }+C\int_{B(0,2R_0)}
        e^{nw(x)}\dx)+C}{s(r)}
    \end{align*}
    After taking limit superior, we obtain
    \[
        \limsup_{r\to \infty}\frac{\ln r}{s(r)}
        \leq 1-m+\frac{\epsilon}{n}.
    \]
    Since $\epsilon$ is arbitrary, we derive that
    \[
        \limsup_{r\to \infty}\frac{\ln r}{s(r)}
        \leq 1-m.
    \]

    Finally we apply \cref{proposition. Harnack ineq.} to complete the proof as follows:
    \[
        \limsup_{x\to\infty}\frac{\ln r(x)}{\ln |x|}\leq 
        \limsup_{r\to \infty}\frac{\ln r}{i(r)}
        \leq \limsup_{r\to \infty}\frac{e^{o(1)}\ln r}{s(r)}
        \leq 1-m.
    \]
\end{proof}

\subsection{Upper bound by $\beta$ }
\ \\
In this subsection, we also include a result showing that the ratio of two distances is bounded from above by the asymptotic volume ratio $\beta$. This is similar to that of \cite[Theorem 2.5]{LT2}.

\begin{proposition}\label{proposition. first part}
    Let $(\mathbb{R}^n,e^{2w}g_0) (n\geq 3)$ be a complete Riemannian manifold with nonnegative Ricci curvature and  $\beta>0$.
    Then 
    \[
        \limsup_{x\to \infty}\frac{\ln r(x)}{\ln |x|}\leq \beta^{\frac{1}{n-1}}.
    \]
\end{proposition}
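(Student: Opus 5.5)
The plan is to follow the outline given at the start of this section: compare two $n$-harmonic functions by a comparison principle, and use \cref{proposition. Harnack ineq.} to pass from a bound on geodesic spheres to a pointwise bound. The first function is $G(x)=\ln|x|$, which is $n$-harmonic for $g$ on $\mathbb{R}^n\setminus\{0\}$ by conformal invariance. The second is built from the area function $A(t)=\operatorname{area}_g(\partial B^g(0,t))$:
\[
F(x)=\int_1^{r(x)}A(t)^{-\frac{1}{n-1}}\dd t .
\]
This $F$ would be exactly $n$-harmonic if $g$ were radially symmetric. In general it is an $n$-supersolution or subsolution in an averaged sense, and the key tool for handling it is the Laplacian comparison $\Delta^g r\le \frac{n-1}{r}$, as in the lemma on $-\Delta_n\ln r\ge0$.

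Rather than work pointwise, I will compare the two functions through flux identities on level sets of $r$. For the $n$-harmonic function $G$, the flux
\[
\int_{\partial B^g(0,t)}|\nabla^g G|^{n-2}\partial_r G
\]
equals the Euclidean value $|\mathbb{S}^{n-1}|$, because the flux through a closed hypersurface surrounding $0$ is independent of the surface. Hölder's inequality on $\partial B^g(0,t)$ then gives
\[
|\mathbb{S}^{n-1}|\le\int_{\partial B^g(0,t)}|\nabla^g G|^{n-1}\le \big(\sup_{\partial B^g(0,t)}|\nabla^g G|\big)^{n-1}A(t).
\]
This lower bound on the gradient is not directly the right direction, so I will instead use the monotonicity of $s$ and $i$ from \cref{lemma. nondecreasing of i(r)} together with the coarea formula. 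Along radial geodesics from $0$ one has $\frac{d}{dt}$ of $G$ averaged over spheres, and since $\int_{\partial B_t}|\nabla G|^{n-2}\partial_rG=|\mathbb{S}^{n-1}|$ while $\partial_r G$ is nearly constant on $\partial B_t$ (its oscillation is controlled by \cref{proposition. Harnack ineq.} and the gradient estimate \cref{ineq. upper bound of gradient of ln G}), I will aim to show
\[
\frac{d}{dt}i(t)\gtrsim |\mathbb{S}^{n-1}|^{\frac{1}{n-1}}A(t)^{-\frac{1}{n-1}}(1+o(1)).
\]
Integrating then gives $\ln|x|\gtrsim |\mathbb{S}^{n-1}|^{\frac1{n-1}}F(x)(1+o(1))$.

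The geometric input is Bishop–Gromov: $A(t)/(|\mathbb{S}^{n-1}|t^{n-1})$ is nonincreasing and tends to $\beta$, so $A(t)\ge \beta|\mathbb{S}^{n-1}|t^{n-1}$. Hence $|\mathbb{S}^{n-1}|^{\frac1{n-1}}F\le \beta^{-\frac{1}{n-1}}\ln r$. This is the wrong direction for the claim, so I will instead use the asymptotic equality $A(t)=(\beta+o(1))|\mathbb{S}^{n-1}|t^{n-1}$, which gives
\[
|\mathbb{S}^{n-1}|^{\frac1{n-1}}F(x)=\beta^{-\frac{1}{n-1}}\ln r(x)\,(1+o(1)).
\]
Combined with the previous step this yields $\ln|x|\ge \beta^{-\frac1{n-1}}\ln r(x)(1+o(1))$, i.e.\ $\limsup \frac{\ln r}{\ln|x|}\le\beta^{\frac1{n-1}}$, which is the claim.

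The main obstacle is the derivative inequality for $i(t)$, which requires controlling $|\nabla^g G|$ on a sphere from below uniformly, not merely on average. I would first try to avoid pointwise control altogether. Integrate the flux identity over the annulus $B^g(0,2t)\setminus B^g(0,t)$ using $|\nabla^g r|=1$, and apply Hölder in the form
\[
\int |\nabla G|^{n-1}\ge \frac{\left(\int|\nabla G|^{n-2}\partial_rG\right)^{\frac{n-1}{n-2}}}{A^{\frac1{n-2}}}.
\]
Then bound $s(2t)-i(t)\ge \int_t^{2t}\big(\min_{\partial B_\tau}\partial_rG\big)\dd\tau$. The oscillation of $G$ on each sphere is $o(1)\cdot G$ by \cref{proposition. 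Harnack ineq.}, but making this compatible with gradient-level control is delicate. The fallback is a comparison principle: on $B^g(0,R)\setminus B^g(0,1)$, compare $G$ with $\lambda F$, after verifying that $F$ is an $n$-subsolution. The Laplacian comparison together with $A'(t)/A(t)\le\frac{n-1}{t}$ should give a sign for $\Delta_n F$ in the barrier sense. Boundary values on $\partial B^g(0,R)$ are then handled by the $e^{o(1)}$ Harnack estimate, letting $R\to\infty$.
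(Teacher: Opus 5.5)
Your proposal has a genuine gap. The inequality that carries the whole argument is a lower bound for how much $G=\ln|x|$ must increase across a geodesic annulus, in terms of $\int A^{-\frac{1}{n-1}}$. You never establish it, and none of your three routes can supply it.

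\textbf{First route.} The pointwise inequality $\frac{d}{dt}i(t)\geq(1+o(1))|\mathbb{S}^{n-1}|^{\frac{1}{n-1}}A(t)^{-\frac{1}{n-1}}$ needs a uniform lower bound for $\partial_r G$ on $\partial B^g(0,t)$. Nothing available gives one. \cref{proposition. Harnack ineq.} only says that the oscillation of $G$ on $\partial B^g(0,t)$ is $o(i(t))$, which carries no derivative information. \cref{ineq. upper bound of gradient of ln G} is an upper bound, not a lower bound.

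\textbf{Averaged variant.} The inequality $s(2t)-i(t)\geq\int_t^{2t}\min_{\partial B^g(0,\tau)}\partial_r G\,\dd\tau$ still needs the same pointwise minimum, whose sign is not even known. The H\"older manipulations of the flux identity only bound averages or suprema of $|\nabla^g G|$ from below.

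\textbf{Fallback.} This fails at its first step. One has $\Delta_n^g F=\operatorname{div}^g\bigl(A(r)^{-1}\nabla^g r\bigr)=A(r)^{-1}\bigl(\Delta^g r-A'(r)/A(r)\bigr)$. Since $A'(r)=\int_{\partial B^g(0,r)}\Delta^g r$ (for spheres avoiding the cut locus), this is $\Delta^g r$ minus its spherical average. It changes sign unless the mean curvature is constant on each geodesic sphere. The two upper bounds $\Delta^g r\leq\frac{n-1}{r}$ and $A'/A\leq\frac{n-1}{r}$ cannot be subtracted to give a sign. Even for a genuine subsolution, the comparison would need $\lambda F(R)+c\leq i(R)$ on the outer sphere, which is the conclusion itself. \cref{proposition. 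Harnack ineq.} relates $s(R)$ to $i(R)$, not to $F(R)$.

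\textbf{The missing idea.} Use the radial function built from $A$ as a competitor in the Dirichlet principle rather than as a barrier; this needs no sign of $\Delta_n^g F$. This is the paper's argument.

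Let $h$ be the $n$-harmonic function on $B^g(0,R_2)\setminus B^g(0,R_1)$ with $h=i(R_1)$ on the inner sphere and $h=s(R_2)$ on the outer sphere. Integrating by parts, its $n$-energy equals $(s(R_2)-i(R_1))$ times its flux. This flux is at least $|\mathbb{S}^{n-1}|$. To see this, compare $h$ with the $n$-harmonic $f$ having $f=G$ inside and $f=s(R_2)$ outside. Then compare $f$ with $G$, whose flux through any hypersurface enclosing $0$ is $|\mathbb{S}^{n-1}|$ by conformal invariance.

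On the other hand, the energy of $h$ is at most that of $H=i(R_1)+(s(R_2)-i(R_1))\int_{R_1}^{r(x)}A^{-\frac{1}{n-1}}\big/\int_{R_1}^{R_2}A^{-\frac{1}{n-1}}$. The coarea formula evaluates this energy exactly as $(s(R_2)-i(R_1))^n\bigl(\int_{R_1}^{R_2}A^{-\frac{1}{n-1}}\bigr)^{-(n-1)}$.

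Together these give $s(R_2)-i(R_1)\geq|\mathbb{S}^{n-1}|^{\frac{1}{n-1}}\int_{R_1}^{R_2}A^{-\frac{1}{n-1}}(t)\dd t$. This is an integrated, sup-minus-inf version of your derivative inequality, and it needs no pointwise gradient control. Only then do you use $A(t)\leq(\beta+\epsilon)|\mathbb{S}^{n-1}|t^{n-1}$ for large $t$ and $s(R_2)\leq i(R_2)e^{o(1)}$ to conclude. That bound on $A$ is the direction you actually need, and your asymptotic equality does supply it.
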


 Define the area function  
$A(r):=\operatorname{Area}_g(\partial B^g(0,r)).$

We begin with a lower bound of the integral of some power of $A(r)$.
\begin{lemma}\label{lemma. lower bound of integral of A(r)}
    Let $(\mathbb{R}^n,e^{2w}g_0)$ be a complete Riemannian manifold with nonnegative Ricci curvature and $\beta>0$. Then for any $\epsilon>0$, there exists $R_0>0$ such that
    \[
        |\mathbb{S}^{n-1}|^{\frac{1}{n-1}} 
        \int_{R_1}^{R_2} A^{-\frac{1}{n-1}}(t)\dd t
        \geq (\beta+\epsilon)^{-\frac{1}{n-1}}
        (\ln R_2-\ln R_1),\quad \forall R_2>R_1\geq R_0.
    \]
\end{lemma}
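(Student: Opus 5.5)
We use the volume asymptotics together with Bishop--Gromov monotonicity of the area ratio. Since $\operatorname{Ric}_g\ge0$, the function $t\mapsto A(t)/(|\mathbb{S}^{n-1}|t^{n-1})$ is non-increasing. It converges, as $t\to\infty$, to the same limit as $\operatorname{vol}_g(B^g(0,t))/(\omega_n t^n)$, namely $\beta$. This follows from $\operatorname{vol}_g(B^g(0,t))=\int_0^t A(s)\,\dd s$, the identity $|\mathbb{S}^{n-1}|=n\omega_n$, and L'H\^opital's rule, legitimate because the area ratio is monotone and hence has a limit.

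Given $\epsilon>0$, we then choose $R_0$ so large that
\[
A(t)\le (\beta+\epsilon)\,|\mathbb{S}^{n-1}|\,t^{n-1}\qquad\text{for all } t\ge R_0 .
\]
Since the area ratio decreases to $\beta$, in fact $A(t)\ge\beta|\mathbb{S}^{n-1}|t^{n-1}$ holds for every $t$, and the upper bound holds for all large $t$. We only need the upper bound here.

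Raising the upper bound to the power $-\frac{1}{n-1}$ gives
\[
|\mathbb{S}^{n-1}|^{\frac1{n-1}}A(t)^{-\frac1{n-1}}\ge(\beta+\epsilon)^{-\frac1{n-1}}\,t^{-1}\qquad\text{for } t\ge R_0 .
\]
Integrating from $R_1$ to $R_2$ with $R_2>R_1\ge R_0$ yields exactly the claimed inequality $(\beta+\epsilon)^{-\frac1{n-1}}(\ln R_2-\ln R_1)$.

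The only point needing care is the regularity of $A(t)$, since it is defined via the distance sphere, which may be non-smooth at the cut locus. We would interpret $A(t)$ as the $(n-1)$-dimensional Hausdorff measure of $\partial B^g(0,t)$, or equivalently as the derivative of the volume, which exists for almost every $t$. Bishop--Gromov monotonicity of $A(t)/t^{n-1}$ holds in this sense, using polar coordinates with the cut locus excluded, so the pointwise bound holds for almost every $t$. That suffices for the integral. Positivity of $A$ is automatic, so the negative power is well defined. No use of the conformal structure is needed for this lemma.
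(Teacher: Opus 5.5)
Your proposal is correct and follows the paper's argument: the Bishop--Gromov monotone area ratio decreases to $\beta$, which gives $A(t)\le(\beta+\epsilon)|\mathbb{S}^{n-1}|t^{n-1}$ for $t\ge R_0$, and you then invert this bound and integrate. Your handling of regularity (an a.e.\ bound via polar coordinates off the cut locus) and your averaging argument identifying the limit of the area ratio with $\beta$ make explicit points that the paper's proof only mentions briefly.
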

\begin{proof}

It suffices first to work with regular radii $R_1$ and $R_2$,
for which the corresponding distance spheres are smooth up to a
set of measure zero. The general statement then follows by
approximation and the coarea formula.

    By the definition of $\beta$ and the Bishop-Gromov volume comparison theorem, for any $\epsilon>0$, there exists $R_0>0$ such that
    \[
        \beta+\epsilon\geq \frac{A(r)}{|\mathbb{S}^{n-1}|r^{n-1}},\qquad \forall r\geq R_0.
    \]
    Equivalently,
    \[
        |\mathbb{S}^{n-1}|^{\frac{1}{n-1}} 
        A(r)^{-\frac{1}{n-1}}
        \geq (\beta+\epsilon)^{-\frac{1}{n-1}}\frac{1}{r},\qquad \forall r\geq R_0.
    \]
    Integrating this inequality from $R_1$ to $R_2$, we get the desired inequality.
    \end{proof}
    
Now we turn to an upper bound of $\int_{R_1}^{R_2} A^{-\frac{1}{n-1}}(t)\dd t$.
\begin{lemma}\label{lemma. upper bound of integral of A(r)}
    Let $(\mathbb{R}^n,e^{2w}g_0),(n\geq 3)$ be a complete Riemannian manifold. Denote $G(x):=\ln |x|$ and $s(r):=\sup_{\partial B^g(0,r)}G,\, i(r):=\inf_{\partial B^g(0,r)}G $. Then
    \[
        s(R_2)-i(R_1)
        \geq        
        |\mathbb{S}^{n-1}|^{\frac{1}{n-1}}
        \left(
        \int_{R_1}^{R_2} A^{-\frac{1}{n-1}}(t)\dd t
        \right),\qquad 
        \forall\, 0<R_1<R_2.
    \]
\end{lemma}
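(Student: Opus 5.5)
The plan is to combine two facts about the $n$-harmonic function $G=\ln|x|$: its $n$-flux through any distance sphere is a fixed constant, and on the shell $R_1<r<R_2$ one can trade $|\nabla^g G|_g$ against $|\nabla^g r|_g=1$ via coarea and Hölder.

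\textbf{Step 1: the flux.} By conformal invariance, $\Delta_n^g G=0$ on $\mathbb{R}^n\setminus\{0\}$, and the $n$-flux is conformally invariant as well. So for any compact domain $\Omega$ containing the origin,
\[
\int_{\partial\Omega}|\nabla^g G|_g^{n-2}\langle\nabla^g G,\nu\rangle_g\,\dd\sigma_g=|\mathbb{S}^{n-1}|.
\]
This is computed on a small Euclidean sphere $|x|=\varepsilon$, where the integrand is $\varepsilon^{1-n}$. In particular, for a.e.\ regular $t$,
\[
\int_{\partial B^g(0,t)}|\nabla^g G|_g^{n-1}\,\dd\sigma_g\ \ge\ |\mathbb{S}^{n-1}|.
\]

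\textbf{Step 2: Hölder on each sphere.} On each sphere, Hölder gives
\[
\int_{\partial B^g(0,t)}|\nabla^g G|_g\,\dd\sigma_g\ \ge\ |\mathbb{S}^{n-1}|^{\frac1{n-1}}A(t)^{-\frac1{n-1}}\cdot\text{?}
\]
This inequality points the wrong way: Hölder bounds the $L^1$ norm of $|\nabla^g G|_g$ from above by its $L^{n-1}$ norm, not from below. I will therefore instead work along level sets of $G$ and use that $G$ is proper, rather than along distance spheres.

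\textbf{Step 2, corrected: Dirichlet-type comparison.} Set $a=i(R_1)$ and $b=s(R_2)$. The shell $\Omega=B^g(0,R_2)\setminus B^g(0,R_1)$ is contained in $\{a\le G\le b\}$ together with the region enclosed by it. Consider the $n$-energy of $G$ on $U=\{a<G<b\}$:
\[
\int_U|\nabla^g G|_g^n\,\dd V_g=\int_U|\nabla G|^n\dx=|\mathbb{S}^{n-1}|(b-a).
\]
This is the Euclidean computation, since $U$ is the annulus $e^a<|x|<e^b$. Also $U\supset\Omega$, because $i$ and $s$ are monotone by Lemma~\ref{lemma. nondecreasing of i(r)}, so every point with $R_1<r<R_2$ satisfies $a<G<b$. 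Apply coarea in $r$ together with Hölder on each sphere:
\[
\int_{\Omega}|\nabla^g G|_g^n\,\dd V_g=\int_{R_1}^{R_2}\int_{\partial B^g(0,t)}|\nabla^g G|_g^n\,\dd\sigma\,\dd t
\ge\int_{R_1}^{R_2}A(t)^{-\frac1{n-1}}\Bigl(\int_{\partial B^g(0,t)}|\nabla^g G|_g^{n-1}\Bigr)^{\frac n{n-1}}\dd t .
\]
By Step 1, the inner flux integral is at least $|\mathbb{S}^{n-1}|$. This yields
\[
|\mathbb{S}^{n-1}|(b-a)\ge|\mathbb{S}^{n-1}|^{\frac n{n-1}}\int_{R_1}^{R_2}A^{-\frac1{n-1}}\dd t,
\]
and dividing by $|\mathbb{S}^{n-1}|$ gives exactly the claimed inequality.

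\textbf{Main obstacle.} The delicate point is the flux identity on distance spheres $\partial B^g(0,t)$, which are only Lipschitz. I will handle it by applying the divergence theorem to the vector field $|\nabla^g G|^{n-2}\nabla^g G$, tested against cutoffs $\phi_k(r)$ that approximate $\chi_{\{r<t\}}$. The coarea formula then gives the flux bound for a.e.\ $t$, which is all the integral in $t$ requires; regularity at the origin is handled by excising a small Euclidean ball. I also need $G\in C^\infty$ away from $0$, which holds, and that $0\notin\Omega$ when $R_1>0$, which holds.
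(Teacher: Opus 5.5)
Your proof is correct, and it takes a genuinely different route from the paper's. The paper runs a Li--Tam type capacity argument, in three steps.
\begin{enumerate}
\item It solves $\Delta_n^g h=0$ on the shell $\Omega=B^g(0,R_2)\setminus B^g(0,R_1)$ with constant boundary values $i(R_1)$ and $s(R_2)$. It then writes $\int_\Omega|\nabla^g h|_g^n\,\dd V_g$ as $(s(R_2)-i(R_1))$ times the $n$-flux of $h$.
\item It bounds this flux from below by the flux $|\mathbb{S}^{n-1}|$ of $G$. This uses an auxiliary $n$-harmonic function $f$, the comparison principle, and pointwise comparison of the conormal derivatives $|\nabla^g f|_g^{n-2}\partial_r f$, $|\nabla^g h|_g^{n-2}\partial_r h$ and $|\nabla^g G|_g^{n-2}\partial_r G$ on the two boundary spheres.
\item It bounds the energy of $h$ from above by that of the radial competitor $H$, whose $r$-derivative is proportional to $A^{-1/(n-1)}$.
\end{enumerate}
You introduce no auxiliary solution at all. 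You bound the $n$-energy of $G$ itself on $\Omega$ from above by conformal invariance and the inclusion of $\Omega$ in the Euclidean annulus $\{i(R_1)<G<s(R_2)\}$. You bound it from below by coarea in $r$, H\"older on each sphere, and the flux identity.

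What your route buys is economy and, more importantly, a cleaner treatment of regularity. You need no existence theory or comparison principle for the $n$-Laplacian. You also avoid integrating by parts and comparing conormal derivatives on the distance spheres $\partial B^g(0,R_i)$. These spheres are in general only Lipschitz, whereas the paper's argument tacitly treats them, and $h$, as $C^1$ up to the boundary. Your weak, a.e.\ $t$ flux identity obtained from the cutoffs $\psi_k(r)$ needs only the coarea formula for the Lipschitz function $r$, and a.e.\ $t$ is all the $t$-integral requires. The paper's route, for its part, is the standard capacity comparison, with the rotationally symmetric model built in through the competitor $H$.

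Two small clean-ups. First, delete the false start in Step~2, together with the announced switch to level sets of $G$, which your corrected argument never uses.

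Second, the monotonicity lemma you cite is stated in the paper under $\mathrm{Ric}_g\geq0$, while the present lemma assumes only completeness. You can bypass it directly. Suppose $R_1<r(x)<R_2$. The intermediate value theorem gives $\lambda_0\in(0,1)$ with $r(\lambda_0x)=R_1$. Since $r$ is proper by completeness, it also gives $\lambda_1>1$ with $r(\lambda_1x)=R_2$. Hence
\[
i(R_1)\leq\ln(\lambda_0|x|)<G(x)<\ln(\lambda_1|x|)\leq s(R_2).
\]
This even yields the strict inclusion $\Omega\subset U$ that you assert. Monotonicity alone gives only $\Omega\subset\{a\leq G\leq b\}$, which would also suffice, since the level sets of $G$ are Euclidean spheres.
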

\begin{proof}
    Let $h$ solve the equation 
    \[
        \begin{cases}
            \Delta_n^g h=0  &\text{in } B^g(0,R_2)\setminus B^g(0,R_1),\\
            h=i(R_1) &\text{on } \partial B^g(0,R_1),\\
            h=s(R_2) &\text{on } \partial B^g(0,R_2).
        \end{cases}
    \]
    It follows that
    \begin{align}
        \int_{B^g(0,R_2)\setminus B^g(0,R_1)}&|\nabla^g h|_g^n dV_g\notag
        =-\int_{B^g(0,R_2)\setminus B^g(0,R_1)}h\Delta_n^g h dV_g \\ \notag
        &+\int_{\partial B^g(0,R_2)} h|\nablag h|_g^{n-2}\frac{\partial h}{\partial r} dA_g
        -\int_{\partial B^g(0,R_1)} h|\nablag h|_g^{n-2}\frac{\partial h}{\partial r} dA_g \notag\\
        =&s(R_2)\int_{\partial B^g(0,R_2)} |\nablag h|_g^{n-2}\frac{\partial h}{\partial r} dA_g
        -i(R_1)\int_{\partial B^g(0,R_1)} |\nablag h|_g^{n-2}\frac{\partial h}{\partial r} dA_g \notag\\
        =&(s(R_2)-i(R_1))\int_{\partial B^g(0,R_2)} |\nablag h|_g^{n-2}\frac{\partial h}{\partial r} dA_g. \label{eq. n-Dirichlet energy of h}
    \end{align}

    Let $f$ satisfy
    \[
        \begin{cases}
            \Delta_n^g f=0 &\text{in } B^g(0,R_2)\setminus B^g(0,R_1),\\
            f=G &\text{on } \partial B^g(0,R_1),\\
            f=s(R_2) &\text{on } \partial B^g(0,R_2).
        \end{cases}
    \]
Applying the comparison principle for $n$-harmonic functions (see \cite[Theorem 2.15]{L}) to $f$ and $h$, we get
    \[
        f\geq h \quad \text{on } B^g(0,R_2)\setminus B^g(0,R_1).
    \]
    It follows from $f=h$ on $\partial B^g(0,R_2)$ that 
    \[
        \frac{\partial f}{\partial r} \leq \frac{\partial h}{\partial r} \quad \text{on } \partial B^g(0,R_2).
    \]
    We claim that
    \begin{equation}\label{ineq. gradient comparison}
        |\nablag f|_g^{n-2}\frac{\partial f}{\partial r}
        \leq  
        |\nablag h|_g^{n-2}\frac{\partial h}{\partial r} 
        \quad \text{on } \partial B^g(0,R_2).
    \end{equation}
    In fact, there holds $\nabla_{\partial B^g(0,R_2)}^g f=\nabla_{\partial B^g(0,R_2)}^g h$ since $f=h$ on $\partial B^g(0,R_2)$. Then We shall verify \cref{ineq. gradient comparison} case by case using a trick in \cite[Theorem 5.2]{MQ}.
    \begin{itemize}
        \item If $0\leq \frac{\partial f}{\partial r}$, then 
        \[
            |\nablag f|_g^2=|\nabla_{\partial B^g(0,R_2)}^g f|_g^2+(\frac{\partial f}{\partial r})^2
            \leq |\nablag h|_g^2
             \quad \text{on } \partial B^g(0,R_2).
        \]
        Hence \cref{ineq. gradient comparison} holds.
        \item If $\frac{\partial f}{\partial r}<0\leq \frac{\partial h}{\partial r}$, then
        \[
            |\nablag f|_g^{n-2}\frac{\partial f}{\partial r}<0\leq |\nablag h|_g^{n-2}\frac{\partial h}{\partial r}
             \quad \text{on } \partial B^g(0,R_2).
        \]
        \item If $\frac{\partial f}{\partial r}\leq \frac{\partial h}{\partial r}<0$, then
        \[
            |\nablag f|_g^2=|\nabla_{\partial B^g(0,R_2)}^g f|_g^2+(\frac{\partial f}{\partial r})^2
            \geq |\nablag h|_g^2
             \quad \text{on } \partial B^g(0,R_2).
        \]
        Hence \cref{ineq. gradient comparison} still holds in this case.
    \end{itemize}

    Similarly, we apply the comparison principle for $n$-harmonic functions to $f$ and $G$ to derive that
    \begin{align}
        f\geq G \quad\quad\text{on }& B^g(0,R_2)\setminus B^g(0,R_1)\notag\\
        |\nablag f|_g^{n-2}\frac{\partial f}{\partial r}
        \geq  
        |\nablag G|_g^{n-2}\frac{\partial G}{\partial r} 
        \quad\text{on }& \partial B^g(0,R_1).\label{ineq. gradient comparison between f and G}
    \end{align}

    Therefore, by \cref{ineq. gradient comparison} and \cref{ineq. gradient comparison between f and G}, we could get a lower bound of \cref{eq. n-Dirichlet energy of h}:
    \begin{align}
        \int_{B^g(0,R_2)\setminus B^g(0,R_1)}|\nabla^g h|_g^n\dd V_g
        =&(s(R_2)-i(R_1))\int_{\partial B^g(0,R_2)} |\nablag h|_g^{n-2}\frac{\partial h}{\partial r}\dd A_g \notag\\
        \geq &(s(R_2)-i(R_1))\int_{\partial B^g(0,R_2)} |\nablag f|_g^{n-2}\frac{\partial f}{\partial r}\dd A_g\notag\\
        =&(s(R_2)-i(R_1))\int_{\partial B^g(0,R_1)} |\nablag f|_g^{n-2}\frac{\partial f}{\partial r}\dd A_g\notag\\
        \geq & (s(R_2)-i(R_1))\int_{\partial B^g(0,R_1)} |\nablag G|_g^{n-2}\frac{\partial G}{\partial r}
        \dd A_g.
        \label{ineq. lower bound of n-Dirichlet energy of h}
    \end{align}

    On the other hand, consider 
    \[
        H(x):=\frac{(s(R_2)-i(R_1))\int_{R_1}^{r(x)}A^{-\frac{1}{n-1}}(t)\dd t}{\int_{R_1}^{R_2} A^{-\frac{1}{n-1}}(t)\dd t}
        +i(R_1),\quad x\in B^g(0,R_2)\setminus B^g(0,R_1).
    \]
    It is straightforward to see that $H|_{\partial B^g(0,R_1)}=i(R_1),\, H|_{\partial B^g(0,R_2)}=s(R_2)$ and
    \[
        \nablag H=\frac{s(R_2)-i(R_1)}{\int_{R_1}^{R_2} A^{-\frac{1}{n-1}}(t)\dd t}
        A^{-\frac{1}{n-1}}(r)\nabla^gr,\quad x\in B^g(0,R_2)\setminus B^g(0,R_1).
    \]
    Therefore we could derive an upper bound of \cref{eq. n-Dirichlet energy of h}:
    \begin{align}
        \int_{B^g(0,R_2)\setminus B^g(0,R_1)}|\nabla^g h|_g^n\dd V_g\notag
        \leq& \int_{B^g(0,R_2)\setminus B^g(0,R_1)}|\nabla^g H|_g^n\dd V_g\notag\\
        =&\int_{R_1}^{R_2}\left(
        \int_{\partial B^g(0,r)}\frac{(s(R_2)-i(R_1))^n}{(\int_{R_1}^{R_2} A^{-\frac{1}{n-1}}(t)\dd t)^n} A^{-\frac{n}{n-1}}(r)\dd A_g
        \right)\dd r\notag\\
        =&\int_{R_1}^{R_2}
        \frac{(s(R_2)-i(R_1))^n}{(\int_{R_1}^{R_2} A^{-\frac{1}{n-1}}(t)\dd t)^n} A^{-\frac{1}{n-1}}(r)
        \dd r\notag\\
        =&(s(R_2)-i(R_1))^n
        \left(
        \int_{R_1}^{R_2} A^{-\frac{1}{n-1}}(t)\dd t
        \right)^{-(n-1)}.\label{ineq. upper bound of n-Dirichlet energy of h}
    \end{align}
    Combining  \cref{ineq. lower bound of n-Dirichlet energy of h} and \cref{ineq. upper bound of n-Dirichlet energy of h}, we derive, for any $0<R_1<R_2$,
    \[
        \left(\int_{\partial B^g(0,R_1)}|\nablag G|_g^{n-2}\frac{\partial G}{\partial r}\dd A_g\right)
        \left(
        \int_{R_1}^{R_2} A^{-\frac{1}{n-1}}(t)\dd t
        \right)^{n-1}
        \leq 
        \big(
        s(R_2)-i(R_1)
        \big)^{n-1}.
    \]

    Finally, recall that 
    \[
        \Delta_n G=\Delta_n \ln |x|=|\mathbb{S}^{n-1}|\delta_0,\quad \text{on }\mathbb{R}^n
    \]
    in the distribution sense.  Hence we could complete the proof by
    \[
        \int_{\partial B^g(0,R_1)}|\nablag G|_g^{n-2}\frac{\partial G}{\partial r}\dd A_g
        =\int_{B^g(0,R_1)}\Delta_n^g
        \ln |x| \ d V_g
        =\int_{B^g(0,R_1)}\Delta_n\ln |x| \dx
        =|\mathbb{S}^{n-1}|.
    \]
\end{proof}

\begin{proof}[Proof of \cref{proposition. first part}:]
    By \cref{lemma. lower bound of integral of A(r)} and \cref{lemma. upper bound of integral of A(r)}, for any $\epsilon>0$, there exists $R_0>0$ such that $\forall R_2>R_1\geq R_0$
    \[
        (\beta+\epsilon)^{-\frac{1}{n-1}}
        (\ln R_2-\ln R_1)
        \leq 
        |\mathbb{S}^{n-1}|^{\frac{1}{n-1}}
        \left(
        \int_{R_1}^{R_2} A^{-\frac{1}{n-1}}(t)\dd t
        \right)
        \leq s(R_2)-i(R_1).
    \]
    Then \cref{proposition. Harnack ineq.} implies that
    \[
        (\beta+\epsilon)^{-\frac{1}{n-1}}(\ln R_2-\ln R_1)
        +i(R_1)
        \leq s(R_2)
        \leq i(R_2)e^{o(1)},\quad  \text{as } R_2\to +\infty.
    \]
    It follows that
    \[
        \limsup_{|x|\to+\infty}\frac{\ln r(x)}{\ln |x|}
        \leq \limsup_{R_2\to +\infty}\frac{\ln R_2}{i(R_2)}\leq (\beta+\epsilon)^{\frac{1}{n-1}}.
    \]
The proof is finished by sending $\epsilon$ to zero.
\end{proof}

\section{Weighted $G$-convergence}
\label{sec:weighted-G}

In this section we prove the convergence statement (\cref{thm:weighted-G-compactness}) needed in the subsequent blow-down argument .  The terminology ``weighted $G$-convergence'' is used only to emphasize the analogy with the classical $G$-compactness theorem for uniformly elliptic divergence-form operators
 (see, for example, \cite[Theorem 3]{ZKON}). 

Let
$g=e^{2w}g_0$
be a complete conformal metric on $\mathbb R^n$, $n\geq 3$, with
$\operatorname{Ric}_g\geq 0$ and $\beta>0$.  We write
\[
    \underline{w}(R):=\inf_{|x|=R}w(x).
\]
Let $m\in(0,1)$ be the asymptotic exponent of $w$ (see  \cref{def of m}), and set
$a:=1-m.$ 

Let $h$ be a $g$-harmonic function, then the conformal transformation formula gives
\[
    \Delta_g h=0
    \quad\Longleftrightarrow\quad
    \operatorname{div}\bigl(e^{(n-2)w}\nabla h\bigr)=0,
\]
where $\operatorname{div},\nabla$ are the divergence and the gradient in the standard Euclidean space respectively. 

In view of the asymptotic exponent of the conformal factor \cref{eq:asympradial}, one may expect that a tangent cone at infinity of $(\mathbb{R}^n, g)$ is the metric cone $(\mathbb{R}^n, |x|^{-2m} g_0)$. It is probably difficult to fully establish this geometric statement due to the presence of that strong 
$\mathcal{E}$-set in \cref{eq:asympradial}.
Nevertheless, one can blow down a $g$-harmonic function to a harmonic function on the limit cone, as we now explain.

For $R>0$, define
\begin{equation}\label{eq:def-AR-BR}
    P_R(y):=\exp\bigl((n-2)(w(Ry)-\underline w(R))\bigr),
    \qquad
    Q_R(y):=\exp\bigl(n(w(Ry)-\underline w(R))\bigr).
\end{equation}
The limiting weights are
\[
    P_\infty(y):=|y|^{-m(n-2)},
    \qquad
    Q_\infty(y):=|y|^{-mn}.
\]

Set $u_R(y):=c_R^{-1}h(Ry), (c_R\neq 0)$, then it solves
\[
    \operatorname{div}(P_R\nabla u_R)=0.
\]

We shall prove that a proper blow down limit of a $g$-harmonic function $h$ converges to a solution of the limiting equation
\begin{equation}\label{eq:limiting-operator}
    L_\infty u
    :=\operatorname{div}(P_\infty\nabla u)
    =\operatorname{div}\bigl(|y|^{-m(n-2)}\nabla u\bigr),
\end{equation}
 which is exactly the
Laplace--Beltrami operator of the limiting cone
\begin{equation}\label{eq. def of limiting cone}
    \mathcal{C}_a=
    \bigl([0,\infty)\times\mathbb S^{n-1},
    d\rho^2+a^2\rho^2g_{\mathbb S^{n-1}}\bigr),
    \qquad \rho=\frac{1}{a}|y|^a.
\end{equation}
{More explicitly, we start with a convergence result for the operators $\operatorname{div}(P_R\nabla\cdot)$.
Ideally, one would hope for the sequence of operators  $\operatorname{div}(P_{R}\nabla\cdot)$ to converge to the limiting operator $\operatorname{div}(P_{\infty}\nabla\cdot)$ in the well-studied $G$-convergence framework, since then Theorem \ref{thm:T2} would follow directly from \cite[Theorem 2.13]{L}.
 However,  the presence of the strong $\mathcal{E}$-set in the asymptotic behavior of $w$ and the lack of boundedness of the limiting weight $P_{\infty}$ make such $G$-convergence unattainable.
 To overcome this obstacle, we instead show that the operators are $\operatorname{III}$-convergent 
in the sense of \cite[Page 76]{ZKON}. 
Building on this, we then establish a matched compactness result for solutions, which plays a vital role in subsequent blow-down arguments.}

{First, we isolate several precise consequences of the asymptotic analysis of the conformal factor $w$, resulting the $\operatorname{III}$-convergence of the operators $\operatorname{div}(P_R\nabla\cdot)$ (i.e. \cref{eq:AR-L1-ball}).}
The proof utilizes the convergence of
$r\underline w'(r)$, the spherical exponential-average estimate, and the
annular $g$-volume estimate established in \cite{M}.
\begin{proposition}
\label{prop:weight-convergence}
For every $\Lambda>0$, as $R\to\infty$,
\begin{equation}\label{eq:AR-L1-ball}
    P_R\to P_\infty
    \quad\text{in }L^1(B_\Lambda),
\end{equation}
\begin{equation}\label{eq:BR-L1-ball}
    Q_R\to Q_\infty
    \quad\text{in }L^1(B_\Lambda).
\end{equation}
In particular, 
\[
    \sup_{R\geq R_\Lambda}\int_{B_\Lambda}P_R\dd y<\infty,
    \qquad
    \sup_{R\geq R_\Lambda}\int_{B_\Lambda}Q_R\dd y<\infty.
\]
Moreover, there exists $c_\Lambda>0$ and $R_\Lambda>0$ such that
\begin{equation}\label{eq:conformal-lower-bound}
    e^{w(Ry)-\underline w(R)}\geq c_\Lambda
    \qquad\text{for all }y\in B_\Lambda,
    \quad R\geq R_\Lambda.
\end{equation}
\end{proposition}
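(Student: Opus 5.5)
The plan is to split $w(Ry)-\underline w(R)$ into a radial part and an oscillation part:
\[
w(Ry)-\underline w(R)=\bigl(\underline w(R|y|)-\underline w(R)\bigr)+\bigl(w(Ry)-\underline w(R|y|)\bigr)=:A_R(y)+O_R(y).
\]
We treat the two parts by different tools from \cite{M}. For the radial part we use the convergence $r\underline w'(r)\to -m$ established in \cite{M}. Integrating $\underline w'$ in $\ln r$ from $R$ to $R|y|$ gives
\[
A_R(y)=-m\ln|y|+\int_{R}^{R|y|}\bigl(t\underline w'(t)+m\bigr)\frac{\dd t}{t}.
\]
For $|y|\in[\delta,\Lambda]$ with $\delta>0$ fixed, the error term is bounded by $\sup_{t\ge R\delta}|t\underline w'(t)+m|\cdot|\ln|y||$, which tends to $0$ uniformly. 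Hence $e^{A_R(y)}\to|y|^{-m}$ uniformly on $\delta\le|y|\le\Lambda$.

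The lower bound \cref{eq:conformal-lower-bound} should follow almost directly. Since $O_R\ge0$ by the definition of $\underline w$, it suffices to bound $A_R$ from below. For $1\le|y|\le\Lambda$ the uniform convergence above gives $A_R\ge -m\ln\Lambda-1$. For $|y|\le 1$ we use that $r\underline w'(r)\le 0$ is not available a priori, so we argue with the Ma--Qing lower bound \cref{ineq. pointwise lower bound of phi} instead. Write $\underline w(R|y|)-\underline w(R)\ge(-m\ln(R|y|)-C)-\underline w(R)$. Using $\underline w(R)=-m\ln R+o(\ln R)$ alone is too weak, so instead we use $t\underline w'(t)\to -m>-1$. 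This shows $\underline w(t)-\underline w(R)\ge -(m+\epsilon)\ln(t/R)\ge0$ for $R_\Lambda\le t\le R$, which covers $|y|\ge R_\Lambda/R$. For $|Ry|\le R_\Lambda'$ we use smoothness of $w$ on a fixed compact set together with $\underline w(R)\to-\infty$, which holds since $m>0$. Combining the ranges gives \cref{eq:conformal-lower-bound}.

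For the $L^1$ convergence, the oscillation part is the main obstacle, because $O_R\to0$ only off the strong $\mathcal E$-set and not uniformly. Here we invoke the spherical exponential-average estimate of \cite{M}, which in the form I expect says
\[
\fint_{\partial B_t}e^{n(w-\underline w(t))}\dd\sigma\to1\quad\text{as }t\to\infty.
\]
If only a bound is available, we combine it with pointwise convergence $O_R\to0$ a.e. (the $\mathcal E$-set has vanishing density at infinity). Since $O_R\ge0$, the convergence $\fint e^{nO}\to1$ gives $e^{nO}\to1$ in $L^1$ of spheres, and by Hölder with exponent $n/(n-2)$ also $e^{(n-2)O}\to1$ in $L^1$ of spheres.

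Integrating in polar coordinates over $\delta\le|y|\le\Lambda$ and using the uniform radial convergence yields $P_R\to P_\infty$ and $Q_R\to Q_\infty$ in $L^1(\{\delta\le|y|\le\Lambda\})$. It remains to make the small ball $B_\delta$ negligible uniformly in $R$. For this we use the annular $g$-volume estimate of \cite{M}, or equivalently Bishop--Gromov together with \cref{thm:distance}. It gives
\[
\int_{B_{\delta}}Q_R\,\dd y=R^{-n}e^{-n\underline w(R)}\operatorname{vol}_g(B(0,R\delta))\lesssim\delta^{n(1-m)}+o(1),
\]
since $R^{-n}e^{-n\underline w(R)}\approx R^{-n(1-m)}$ and $\operatorname{vol}_g B(0,R\delta)\lesssim(R\delta)^{n(1-m)}$. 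This bound can also be obtained dyadically by summing the annular estimates. Hölder then controls $\int_{B_\delta}P_R\le C\delta^{2}\bigl(\int_{B_\delta}Q_R\bigr)^{(n-2)/n}$. Both limits $P_\infty$ and $Q_\infty$ are integrable near $0$ because $mn<n$. Letting $R\to\infty$ and then $\delta\to0$ proves \cref{eq:AR-L1-ball} and \cref{eq:BR-L1-ball}, and the uniform bounds on $\int_{B_\Lambda}P_R$ and $\int_{B_\Lambda}Q_R$ follow immediately.
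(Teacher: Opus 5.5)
Your architecture is the paper's: the radial part is controlled by $r\underline w'(r)\to -m$, the nonnegative oscillation by Ma's averaged estimates, then the small ball is made negligible, and the lower bound comes from eventual monotonicity of $\underline w$ plus $\underline w(R)\to-\infty$. However, the small-ball step fails as you justify it. The claims $R^{-n}e^{-n\underline w(R)}\approx R^{-n(1-m)}$ and $\operatorname{vol}_g(B(0,R\delta))\lesssim (R\delta)^{n(1-m)}$ hold only up to factors $R^{o(1)}$, and these factors do not cancel. This is exactly the weakness of $\underline w(R)=-m\ln R+o(\ln R)$ that you yourself flagged in the lower-bound paragraph.

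Concretely, the Ma--Qing bound $w\geq -m\ln|x|-C$ gives $R^{-n}e^{-n\underline w(R)}\leq CR^{-n(1-m)}$. But Bishop--Gromov together with \cref{thm:distance} only gives $\operatorname{vol}_g(B(0,\rho))\leq C\rho^{n(1-m)+n\epsilon}$. The product is therefore $\leq CR^{n\epsilon}\delta^{n(1-m+\epsilon)}$, which is unbounded in $R$. A bound $\operatorname{vol}_g(B(0,\rho))\lesssim\rho^{n(1-m)}$ would require $\underline w(\rho)+m\ln\rho$ to be bounded above, and that is not known.

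What is needed is a scale-invariant ratio estimate. This is the dyadic route you mention only in your last sentence, and it is what the paper does:
\begin{itemize}
\item On dyadic annuli $A_{s/2,s}$ with $Rs\geq T$, compare $Q_R$ with $\underline Q_R$ using Ma's annular volume estimate.
\item Use the Potter bound $e^{n(\underline w(Rs)-\underline w(R))}\leq s^{-n(m+\epsilon)}$ for $s\leq 1$, obtained by integrating $r\underline w'\to -m$. Summing gives a geometric series $\lesssim\delta^{n(1-m-\epsilon)}$.
\item Treat $B_{T/R}$ directly: $\int_{B_{T/R}}Q_R\dd y=R^{-n}e^{-n\underline w(R)}\int_{B_T}e^{nw}\dd x\to 0$, using $m<1$.
\end{itemize}

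On the oscillation, the exponent-$n$ input available in the paper is not a spherical average. It is Ma's annular estimate
\[
\frac{\int_{A}(e^{nw}-e^{n\underline w})\dd x}{\int_A e^{n\underline w}\dd x}\to 0, \qquad A=A_{\tau R/2,\tau R}.
\]
The spherical average \cite[Lemma 4.7]{M} is used only with exponent $n-2$. The annular version suffices for your argument.

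Your H\"older reduction is a genuine economy over the paper. Since $e^{(n-2)O_R}\geq 1$ and $\underline P_R,\underline Q_R$ are bounded above and below on $A_{\delta,\Lambda}$, $L^1$ convergence of $Q_R$ there forces that of $P_R$. Likewise $\int_{B_\delta}P_R\leq C\delta^2\bigl(\int_{B_\delta}Q_R\bigr)^{(n-2)/n}$ handles the small ball. So you never need \cite[Lemma 4.7]{M}.

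Your fallback (a mere bound plus convergence off the $\mathcal E$-set) does not suffice, however. It gives uniform integrability of $e^{(n-2)O_R}$, hence $P_R\to P_\infty$. It gives no uniform integrability of $e^{nO_R}$, so it does not give $Q_R\to Q_\infty$.

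Finally, the inequality in your lower bound has the wrong constant. Integrating $-t\underline w'(t)\in(m-\epsilon,m+\epsilon)$ gives $\underline w(t)-\underline w(R)\geq (m-\epsilon)\ln(R/t)\geq 0$ for $\epsilon<m$. The relevant fact is $m>0$, not $-m>-1$, and the announced Ma--Qing bound is never used there. With that correction, your three-range argument is the paper's.
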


\begin{proof}
The radial analysis in \cite[Lemma 2.6]{M} gives
\begin{equation}\label{eq:radial-derivative-limit}
    r\underline w'(r)\to -m \qquad
    \text{as } r\to +\infty.
\end{equation}
Hence, for every $0<\delta<\Lambda<\infty$,
\[
    \underline w(Rs)-\underline w(R)
    \to -m\log s,\qquad
    \text{as }R\to +\infty,
\]
uniformly for $s\in[\delta,\Lambda]$.  Therefore, for $q=n-2$ and
$q=n$,
\begin{equation}\label{eq:radial-weight-uniform}
    \exp\bigl(q(\underline w(R|y|)-\underline w(R))\bigr)
    \to |y|^{-mq},\qquad
    \text{as }R\to +\infty,
\end{equation}
uniformly on every compact annulus.

On a noncompact region,  a direct  integration of \cref{eq:radial-derivative-limit}  implies the following estimate: for any $\epsilon>0$, there exists $R_\epsilon>0$ such that for all $R\geq R_\epsilon$ and $s\geq \frac{R_\epsilon}{R}$, there holds
\begin{equation}\label{eq:potter-bound}
     \exp\bigl(q(\underline w(Rs)-\underline w(R))\bigr)
    \leq \max\{ s^{-q(m-\epsilon)}, s^{-q(m+\epsilon)} \}, 
    \qquad q\in\{n-2,n\}.
\end{equation}

We first prove the convergence of $P_R$.  Set
\[
    \underline P_R(y)
    :=\exp\bigl((n-2)(\underline w(R|y|)-\underline w(R))\bigr).
\]
Since $w(x)\geq\underline w(|x|)$, there holds $P_R\geq \underline P_R.$
For a fixed annulus $A_{\delta,\Lambda}$, we have
\begin{equation}\label{eq:A-annulus-difference}
\int_{A_{\delta,\Lambda}}(P_R-\underline P_R)\dd y
=
|\mathbb S^{n-1}|
\int_\delta^\Lambda s^{n-1}
\exp\bigl((n-2)(\underline w(Rs)-\underline w(R))\bigr)
\bigl(F(Rs)-1\bigr)\dd s,
\end{equation}
where
\[
    F(t):=
    \frac{1}{|\mathbb S^{n-1}|}\int_{\mathbb S^{n-1}}
    \exp\bigl((n-2)(w(t\theta)-\underline w(t))\bigr)\dd \theta.
\]
By the spherical exponential-average estimate in
\cite[Lemma 4.7]{M}, there holds
\begin{equation}\label{eq:spherical-average-limit}
    F(t)\to 1\qquad 
    \text{as }t\to+\infty.
\end{equation}
 It follows from
\cref{eq:A-annulus-difference}, \cref{eq:radial-weight-uniform}, and
\cref{eq:spherical-average-limit} that
\[
    \|P_R-\underline P_R\|_{L^1(A_{\delta,\Lambda})}
    \to 0\qquad
    \text{as } R\to+\infty.
\]
Together with \cref{eq:radial-weight-uniform}, this proves
\begin{equation}\label{eq:A-L1-annulus}
    P_R\to P_\infty
    \quad\text{in }L^1(A_{\delta,\Lambda}).
\end{equation}

To pass from annuli to the whole ball, we prove uniform integrability near 0. Fix $T>1$. On the central ball $B_{T / R}$, changing variables $x=R y$ yields
\[
\int_{B_{T / R}} P_R(y) \mathrm{d} y=R^{-n} e^{-(n-2) \underline{w}(R)} \int_{B_T} e^{(n-2) w(x)} \mathrm{d} x.
\]

Since $\underline{w}(R)=-m \log R+o(\log R)$ and $n-m(n-2)>0$, the right-hand side tends to zero for each fixed $T$. On $B_\delta \backslash B_{T / R}$, decomposing into dyadic annuli and use \cref{eq:potter-bound} together with \cref{eq:A-annulus-difference} and \cref{eq:spherical-average-limit}, we obtain
\begin{equation} \label{eq: 4.1}
\lim _{\delta \downarrow 0} \limsup _{R \rightarrow \infty} \int_{B_\delta} P_R \mathrm{~d} y=0 
\end{equation}
Since the same estimate is immediate for $P_{\infty}$, combining \cref{eq:A-L1-annulus} and \cref{eq: 4.1} proves \cref{eq:AR-L1-ball}.

We now turn to $Q_R$.  Define
\[
    \underline Q_R(y)
    :=\exp\bigl(n(\underline w(R|y|)-\underline w(R))\bigr).
\]
Again $Q_R\geq\underline Q_R$, and $\underline Q_R\to Q_\infty$
uniformly on compact annulus. In the proof of \cite[Page 25]{M}, one obtains, uniformly for
$\tau\in[1/2,1]$,
\begin{equation}\label{eq:Ma-annular-volume}
\frac{
\displaystyle
\int_{A_{\tau R/2,\tau R}}
\bigl(e^{nw(x)}-e^{n\underline w(|x|)}\bigr)\dd x
}{
\displaystyle
\int_{A_{\tau R/2,\tau R}}
 e^{n\underline w(|x|)}\dd x
}
\to 0, \qquad \text{as $R\to\infty $}.
\end{equation}
Here the numerator is nonnegative because
$w(x)\geq\underline w(|x|)$.  After the change of variables $x=Ry$,
\cref{eq:Ma-annular-volume} implies
\[
    \|Q_R-\underline Q_R\|_{L^1(A_{\delta,\Lambda})}
    \to0
\]
for every fixed annulus. Thus
\[
    Q_R\to Q_\infty
    \quad\text{in }L^1(A_{\delta,\Lambda}).
\]
The same central-ball and dyadic-annulus argument as above, now using
$mn<n$, \cref{eq:potter-bound} and \cref{eq:Ma-annular-volume}, gives
\[
    \lim_{\delta\downarrow0}
    \limsup_{R\to\infty}
    \int_{B_\delta}Q_R\dd y=0.
\]
This proves \cref{eq:BR-L1-ball}.

It remains to prove the lower bound \cref{eq:conformal-lower-bound}. We shall verify it in three cases separately.
If $0<|y|\leq 1$ and $R|y|$ sufficiently large, then $\underline{w}$ is decreasing due to \cref{eq:radial-derivative-limit}. Therefore, 
\[
   w(Ry)-\underline{w}(R)
   \geq \underline w(R|y|)-\underline w(R)
   \geq0.
\]
If $0<|y|\leq 1$ and $R|y|$ remains in a fixed compact interval, then $w(Ry)$ is bounded
from below while $\underline w(R)\to-\infty$ as $R$ tends to $+\infty$, so the same expression is bounded from below.
If $1\leq |y|\leq\Lambda$,
\cref{eq:radial-derivative-limit} gives
\[
    w(Ry)-\underline{w}(R)\geq
    \underline w(R|y|)-\underline w(R)
    \geq-(m+1)\log\Lambda
\]
for all sufficiently large $R$.  This proves
\cref{eq:conformal-lower-bound}.
\end{proof}

{In the remaining part of this section, we prove the key convergence theorem for solutions to  $\operatorname{div}(A_R\nabla u)=0$.
This can be viewed as an analogue, within the framework of $\operatorname{III}$-convergence of operators (as defined in \cite[Page 76]{ZKON}), of the classical convergence result for $G$-convergent operators (see, for example, \cite[(iv) in page 220]{L}).}

We denote
\[
    d\nu_R:=Q_R(y)\dd y,
    \qquad
    d\nu_\infty:=Q_\infty(y)\dd y,
\]
and, for $r>0$, define the weighted integral averages
\[
    \operatorname{Av}_{R,r}(f)
    :=\frac{\displaystyle\int_{B_r}f\dd\nu_R}
            {\displaystyle\nu_R(B_r)},
    \qquad
    \operatorname{Av}_{\infty,r}(f)
    :=\frac{\displaystyle\int_{B_r}f\dd\nu_\infty}
            {\displaystyle\nu_\infty(B_r)}.
\]

\begin{theorem}
\label{thm:weighted-G-compactness}
Let $R_j\to\infty$ and $\Lambda\geq 1$ are fixed. Assume $u_j$ are weak solutions of
\begin{equation}\label{eq:uj-equation}
    \operatorname{div}(P_{R_j}\nabla u_j)=0
    \qquad\text{in }B_\Lambda,
\end{equation}
with
\begin{equation}\label{eq:weighted-L2-assumption}
    \sup_j\operatorname{Av}_{R_j,\Lambda}(u_j^2)<\infty.
\end{equation}
Then, after passing to a subsequence, there exists a function $u_\infty$
satisfying:
\begin{enumerate}
\item For every $0<\Lambda'<\Lambda$,
\[
    u_j\rightharpoonup u_\infty
    \quad\text{in }H^1(B_{\Lambda'}),\quad 
    \text{and}\quad
    u_j\to u_\infty
    \quad\text{in }L^2(B_{\Lambda'}).
\]

\item  $u_\infty$ is locally bounded near the
origin and satisfies
\[
    L_\infty u_\infty=0
    \qquad\text{in }B_\Lambda.
\]
in the weak sense, where $L_{\infty}$ is defined in \cref{eq:limiting-operator}. 
\item For every $0<r<\Lambda$,
\begin{equation}\label{eq:weighted-mass-convergence}
    \operatorname{Av}_{R_j,r}(u_j^2)
    \to
    \operatorname{Av}_{\infty,r}(u_\infty^2).
\end{equation}
More generally, if
$u_{1,j},\ldots,u_{q,j}$ are finitely many sequences satisfying the
same assumptions and converging simultaneously, then
\begin{equation}\label{eq:cross-mass-convergence}
    \operatorname{Av}_{R_j,r}
    \bigl(u_{p,j}u_{q,j}\bigr)
    \to
    \operatorname{Av}_{\infty,r}
    \bigl(u_{p,\infty}u_{q,\infty}\bigr)
\end{equation}
for all $p,q$ and every fixed $r\in(0,\Lambda)$.


\end{enumerate}
\end{theorem}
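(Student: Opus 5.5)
The plan is to get uniform local energy bounds from Caccioppoli, extract weak limits, identify the limit equation using the $L^1$ convergence of the weights in \cref{prop:weight-convergence}, and then pass to the limit in the weighted averages.

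\textbf{Step 1 (uniform bounds).} The lower bound \cref{eq:conformal-lower-bound} gives $P_{R_j}\geq c_\Lambda^{n-2}$ and $Q_{R_j}\geq c_\Lambda^{n}$ on $B_\Lambda$ for large $j$. Since $\nu_{R_j}(B_\Lambda)\to\nu_\infty(B_\Lambda)\in(0,\infty)$ by \cref{eq:BR-L1-ball}, hypothesis \cref{eq:weighted-L2-assumption} yields $\sup_j\int_{B_\Lambda}u_j^2\dd y<\infty$. Caccioppoli for $\operatorname{div}(P_{R_j}\nabla u_j)=0$ gives
\[
\int P_{R_j}\eta^2|\nabla u_j|^2\le 4\int P_{R_j}|\nabla\eta|^2u_j^2 .
\]
On an annulus $A_{\delta,\Lambda'}$ the weights are bounded above uniformly, because $w$ is smooth and bounded above on compact sets of $x$ scaled appropriately. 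I am less sure about this on the set $E$. What is clean is a local sup bound: by Moser iteration in $g$-geometry (the equation is $g$-harmonic, $\operatorname{Ric}\ge0$, and balls are doubling), $|u_j|$ is bounded on compact subsets in terms of $g$-averages of $u_j^2$, which are comparable to $\nu_{R_j}$-averages. Combining the sup bound with the Caccioppoli inequality and the lower bound on $P_{R_j}$ gives $\sup_j\|\nabla u_j\|_{L^2(B_{\Lambda'})}<\infty$. Rellich then gives part (1) along a subsequence, and the uniform local $L^\infty$ bound passes to $u_\infty$, so $u_\infty$ is bounded near $0$.

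\textbf{Step 2 (limit equation).} For $\varphi\in C_c^\infty(B_\Lambda)$ we have $\int P_{R_j}\nabla u_j\cdot\nabla\varphi=0$. The difficulty is a product of a weakly convergent gradient with a weight converging only in $L^1$. I would handle it with the div-curl/compensated structure used in III-convergence. Test with $\varphi u_j$ and $\varphi u_\infty$ type functions and use an energy identity: show $\int P_{R_j}|\nabla u_j|^2\varphi\to\int P_\infty|\nabla u_\infty|^2\varphi$ by a lower-semicontinuity argument in both directions, minimizing the weighted Dirichlet energies with matched boundary data, which makes this a $\Gamma$-convergence argument.

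A simpler first attempt is to split $P_{R_j}=P_\infty+(P_{R_j}-P_\infty)$. For the second term, use the uniform $L^\infty$ bound on $u_j$ and the identity $\int (P_{R_j}-P_\infty)\nabla u_j\cdot\nabla\varphi$. Integrating by parts, this becomes $-\int u_j\operatorname{div}((P_{R_j}-P_\infty)\nabla\varphi)$, which is not available for rough $P$. Instead I would use Cauchy--Schwarz,
\[
\Bigl|\int (P_{R_j}-P_\infty)\nabla u_j\cdot\nabla\varphi\Bigr|\le \Bigl(\int |P_{R_j}-P_\infty|\,|\nabla\varphi|^2\Bigr)^{1/2}\Bigl(\int (P_{R_j}+P_\infty)|\nabla u_j|^2\Bigr)^{1/2}.
\]
The first factor tends to $0$ by \cref{eq:AR-L1-ball}. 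The second is bounded by the weighted Caccioppoli inequality, using $\int P_{R_j}\le C$ and $u_j\in L^\infty_{loc}$ for the $P_{R_j}$ part. The $P_\infty$ part is bounded since $P_\infty$ is bounded away from $0$, i.e.\ it is bounded on bounded sets, but it is unbounded near $0$ only as $|y|^{-m(n-2)}$. There I would use a cutoff near the origin together with \cref{eq: 4.1}. The term $\int P_\infty\nabla u_j\cdot\nabla\varphi$ converges by weak $H^1$ convergence away from $0$, with the small ball near $0$ controlled by the smallness of $\int_{B_\delta}P_\infty$ and a removable-singularity argument for bounded solutions. This yields $L_\infty u_\infty=0$ on $B_\Lambda$.

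\textbf{Step 3 (masses).} Write
\[
\int_{B_r}u_{p,j}u_{q,j}Q_{R_j}=\int_{B_r}u_{p,j}u_{q,j}Q_\infty+\int_{B_r}u_{p,j}u_{q,j}(Q_{R_j}-Q_\infty).
\]
The second term is at most $\sup|u|^2\,\|Q_{R_j}-Q_\infty\|_{L^1}\to0$ by the local uniform bounds and \cref{eq:BR-L1-ball}. The first term converges by $L^2$ convergence and dominated convergence, since $Q_\infty\in L^1$ and the products are uniformly bounded on $B_r$ with $r<\Lambda$. Dividing by $\nu_{R_j}(B_r)\to\nu_\infty(B_r)$ gives \cref{eq:cross-mass-convergence}.

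\textbf{Main obstacle.} The main obstacle is the uniform local $L^\infty$ bound up to the origin. I would obtain it from the Moser/mean-value inequality on $(\mathbb R^n,g)$ with $\operatorname{Ric}\ge0$, rescaled. This needs the comparison between $g$-balls and Euclidean balls given by \cref{thm:distance} together with the annular volume control, to dominate the $g$-ball average by the $\nu_{R_j}$-average over $B_\Lambda$.
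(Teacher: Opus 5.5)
Your architecture matches the paper's proof. You get a uniform local $L^\infty$ bound from a mean-value inequality in the rescaled geometry. Caccioppoli, together with $\sup_j\int_{B_\Lambda}P_{R_j}<\infty$ and \cref{eq:conformal-lower-bound}, gives the $H^1$ bound. You pass to the limit in the equation on compact $K\subset B_\Lambda\setminus\{0\}$, remove the singularity at the vertex, and use dominated convergence with \cref{eq:BR-L1-ball} for the masses (your Step 3 is the paper's Step 5). Your flux-error estimate is tidier than the paper's. Cauchy--Schwarz with weight $|P_{R_j}-P_\infty|\le P_{R_j}+P_\infty$ gives \cref{eq:flux-error-goes-zero} directly for $\varphi\in C_c^\infty(K)$. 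The first factor is at most $\|\nabla\varphi\|_\infty\|P_{R_j}-P_\infty\|_{L^1(B_\Lambda)}^{1/2}\to0$. The second is bounded by the weighted energy bound and $\sup_KP_\infty\,\|\nabla u_j\|_{L^2(K)}^2$. This replaces the paper's splitting into $E_j^\varepsilon$ and $G_j^\varepsilon$, and makes the div--curl/$\Gamma$-convergence detour unnecessary.

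The gap is in the step you call the main obstacle. \cref{thm:distance} only gives $r(x)=|x|^{a+o(1)}$. At Euclidean scale $R_j$ the error factor $R_j^{o(1)}$ is unbounded, so it cannot place a geodesic ball of fixed rescaled radius inside $B_\Lambda$. The $j$-uniform lower bound on the small ball's volume comes from $\beta>0$, not from ``doubling'' or an annular volume estimate. The fix uses tools you already have.

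Set $\widehat g_j:=e^{2(w(R_jy)-\underline w(R_j))}g_0=(R_je^{\underline w(R_j)})^{-2}D_{R_j}^*g$. Its volume measure is exactly $\dd\nu_{R_j}$, so no comparability argument is needed, and $u_j$ is $\widehat g_j$-harmonic. Since $\widehat g_j$ has $\operatorname{Ric}\ge0$ and asymptotic volume ratio $\beta$, Bishop--Gromov gives $\operatorname{vol}_{\widehat g_j}(B_{\widehat g_j}(p,\rho_0))\ge\beta|\mathbb B^n|\rho_0^n$ for every $p$.

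Containment comes from \cref{eq:conformal-lower-bound}: $\widehat g_j\ge c_\Lambda^2g_0$ on $B_\Lambda$. Hence a curve from $p\in B_{\Lambda_0}$ that leaves $B_\Lambda$ has $\widehat g_j$-length at least $c_\Lambda(\Lambda-\Lambda_0)$, and $B_{\widehat g_j}(p,2\rho_0)\subset B_\Lambda$ for small $\rho_0$. The Li--Schoen inequality for the subharmonic $u_j^2$ then bounds $|u_j(p)|^2$ by $C(n)(\beta|\mathbb B^n|\rho_0^n)^{-1}\int_{B_\Lambda}u_j^2\dd\nu_{R_j}$, which is uniformly bounded.

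At the vertex, neither smallness of $\int_{B_\delta}P_\infty$ nor \cref{eq: 4.1} suffices, because $|\nabla\chi_\varepsilon|\sim\varepsilon^{-1}$. What you need is the vanishing weighted capacity $\int P_\infty|\nabla\chi_\varepsilon|^2\le C\varepsilon^{-2}\int_{B_{2\varepsilon}}P_\infty\le C\varepsilon^{(1-m)(n-2)}\to0$. Test the punctured equation with $\eta^2\chi_\varepsilon^2u_\infty$ and use the boundedness of $u_\infty$ to get $P_\infty^{1/2}\nabla u_\infty\in L^2_{\mathrm{loc}}$ across the origin. Then test with $\chi_\varepsilon\varphi$ and let $\varepsilon\to0$.
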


\begin{proof}
{\bf Step 1: }
Set
\[
    \widehat g_j
    :=e^{2(w(R_jy)-\underline w(R_j))}g_0.
\]
Then $\widehat g_j
    =(R_je^{\underline w(R_j)})^{-2}D_{R_j}^*g$, where $D_{R_j}(y)=R_jy$.
Consequently, $(\mathbb R^n,\widehat g_j)$ is a complete manifold with nonnegative Ricci curvature,
and its asymptotic volume ratio is equal to the asymptotic volume ratio
$\beta>0$ of $(\mathbb R^n,g)$.  Moreover,
\[
    \dd \mu_{\widehat g_j}=Q_{R_j}\dd y=\dd \nu_{R_j},
\]
and \cref{eq:uj-equation} is equivalent to
\[
    \Delta_{\widehat g_j}u_j=0.
\]

Fix $0<\Lambda'<\Lambda_0<\Lambda$.  By
\cref{eq:conformal-lower-bound}, there exists $c_\Lambda>0$ such
that
\[
    \widehat g_j\geq c_\Lambda^2g_0
    \qquad\text{on }B_\Lambda
\]
for all sufficiently large $j$.  Hence one can choose
$\rho_0=\rho_0(\Lambda-\Lambda_0,c_\Lambda)>0$ such that
\[
    B_{\widehat g_j}(p,2\rho_0)\subset B_\Lambda
    \qquad\text{for every }p\in B_{\Lambda_0}.
\]
Since $u_j^2$ is $\widehat g_j$-subharmonic, the mean-value inequality on
manifolds with nonnegative Ricci curvature \cite[Theorem 2.1]{LS} gives
\begin{equation}\label{eq:mean-value}
    |u_j(p)|^2
    \leq
    \frac{C(n)}{
    \operatorname{vol}_{\widehat g_j}
    (B_{\widehat g_j}(p,\rho_0))}
    \int_{B_{\widehat g_j}(p,\rho_0)}u_j^2\dd\nu_{R_j}
    \leq \frac{C(n)\int_{B_\Lambda}
    u_j^2\dd\nu_{R_j}}{
    \operatorname{vol}_{\widehat g_j}
    (B_{\widehat g_j}(p,\rho_0))}
    ,\quad 
    \forall p\in B_{\Lambda_0}.
\end{equation}

 By
\cref{eq:weighted-L2-assumption} and
\cref{prop:weight-convergence}, we have
\[
    \sup_j\int_{B_\Lambda}u_j^2\dd\nu_{R_j}<\infty.
\]
By Bishop--Gromov volume comparison and the positivity of the asymptotic volume ratio,
\[
    \operatorname{Vol}_{\widehat g_j}
    (B_{\widehat g_j}(p,\rho_0))
    \geq\beta|\mathbb{B}^n|\rho_0^n.
\]
Inserting these estimates into \cref{eq:mean-value}, we get a uniform upper bound of $L^\infty$ norm
\begin{equation}\label{eq:uniform-Linf}
    \sup_j\|u_j\|_{L^\infty(B_{\Lambda_0})}<\infty.
\end{equation}

{\bf Step 2:} 
Choose $\eta\in C_c^\infty(B_{\Lambda_0})$ with
$\eta\equiv1$ on $B_{\Lambda'}$.  Testing
\cref{eq:uj-equation} with $\eta^2u_j$ gives
\[
    \int_{B_{\Lambda_0}} P_{R_j}\eta^2|\nabla u_j|^2
    \leq 4\int_{B_{\Lambda_0}}
    P_{R_j}u_j^2|\nabla\eta|^2.
\]
The right-hand side is uniformly bounded by
\cref{eq:uniform-Linf} and
\cref{prop:weight-convergence}.
Hence
\begin{equation}\label{eq:weighted-energy-bound}
    \int_{B_{\Lambda'}}P_{R_j}|\nabla u_j|^2\dd y\leq C.
\end{equation}
Using \cref{eq:conformal-lower-bound}, we obtain
\begin{equation}\label{eq:ordinary-H1-bound}
    \|u_j\|_{H^1(B_{\Lambda'})}\leq C.
\end{equation}
The Banach--Alaoglu and Rellich compactness theorems now give
the first assertion.
\medskip

\noindent
{\bf Step 3:}
Fix a compact set
$K\subset B_\Lambda\setminus\{0\}$ and
$\varphi\in C_c^\infty(K)$.  We claim that
\begin{equation}\label{eq:flux-error-goes-zero}
    \int_K(P_{R_j}-P_\infty)
    \nabla u_j\cdot\nabla\varphi\dd y
    \to0.
\end{equation}
In fact, for $\varepsilon>0$, consider
\[
    E_j^\varepsilon
    :=\{y\in K:|P_{R_j}-P_\infty|>\varepsilon\},
    \qquad
    G_j^\varepsilon:=K\setminus E_j^\varepsilon.
\]
By \cref{eq:AR-L1-ball}, we have
\begin{equation}\label{eq:bad-set-properties}
    \epsilon|E_j^\varepsilon|
    \leq \int_{B_\Lambda }|P_{R_j}-P_{\infty}|
    \to0,
    \qquad
    \text{as }j\to+\infty.
\end{equation}
Notice that $P_\infty$ is bounded on $K$, it follows from \cref{eq:bad-set-properties} that
\begin{equation}\label{eq. small integration on bad set}
    \int_{E_j^\varepsilon}P_{R_j}\dd y
    \leq
    \int_{E_j^\varepsilon}|P_{R_j}-P_\infty|
    +\int_{E_j^\varepsilon}P_\infty
    \to0\quad
    \text{as }j\to+\infty.
\end{equation}
On the good set $G_j^\epsilon$, by \cref{eq:ordinary-H1-bound},
\[
\left|
\int_{G_j^\varepsilon}(P_{R_j}-P_\infty)
\nabla u_j\cdot\nabla\varphi
\right|
\leq C\varepsilon.
\]
On the bad set, the $P_{R_j}$-part of \cref{eq:flux-error-goes-zero} is estimated by Cauchy--Schwarz, \cref{eq:weighted-energy-bound} and \cref{eq. small integration on bad set}:
\[
\left|
\int_{E_j^\varepsilon}P_{R_j}
\nabla u_j\cdot\nabla\varphi
\right|
\leq
\left(\int_{E_j^\epsilon}P_{R_j}|\nabla u_j|^2\right)^{1/2}
\left(\int_{E_j^\varepsilon}
P_{R_j}|\nabla\varphi|^2\right)^{1/2}
\to0.
\]
Since $P_\infty$ is bounded on $K$, the remaining part of \cref{eq:flux-error-goes-zero} satisfies, by \cref{eq:bad-set-properties} and \cref{eq:ordinary-H1-bound},
\[
\left|
\int_{E_j^\varepsilon}P_\infty
\nabla u_j\cdot\nabla\varphi
\right|
\leq
C\|\nabla\varphi\|_{L^\infty(K)}
|E_j^\varepsilon|^{1/2}
\|\nabla u_j\|_{L^2(K)}
\to0.
\]
Combining the above three estimates, and then letting
$\varepsilon$ go to zero, proves the claim
\cref{eq:flux-error-goes-zero}.

Since, for any $\varphi\in C_c^\infty(K)$,
\[
    \int_{K} P_{R_j}\nabla u_j\cdot\nabla\varphi=0,
\]
we have
\[
    \int_{K} P_\infty\nabla u_j\cdot\nabla\varphi\to0,\quad 
    \text{as }j\to+\infty.
\]
Using the weak $H^1$ convergence of $\{u_j\}$ and the fact that $P_\infty$ is bounded
on $K$, we conclude
\begin{equation}\label{eq. punctured equation}
    \int_{K} P_\infty\nabla u_\infty\cdot\nabla\varphi=0.
\end{equation}

{\bf Step 4:}
Notice that the local $L^\infty$ bound \cref{eq:uniform-Linf} passes to the limit, so $u_\infty$ is bounded
near $0$. 
 Let $\chi_\varepsilon$ be a radial cutoff function satisfying
\[
    \chi_\varepsilon=0\text{ on }B_\varepsilon,
    \qquad
    \chi_\varepsilon=1\text{ on }\mathbb R^n\setminus B_{2\varepsilon},
    \qquad
    |\nabla\chi_\varepsilon|\leq C\varepsilon^{-1}.
\]
Then a direct calculation yields
\begin{equation}\label{eq:vertex-capacity}
    \int_{B_\Lambda} P_\infty|\nabla\chi_\varepsilon|^2\dd y
    \leq
    C\varepsilon^{-2}
    \int_\varepsilon^{2\varepsilon}
    r^{n-1-m(n-2)}\dd r
    \leq C\varepsilon^{(1-m)(n-2)}
    \to 0.
\end{equation}
Choose an outer cutoff function $\eta\in C_c^\infty(B_\Lambda)$.
Testing the equation \cref{eq. punctured equation} with
$\eta^2\chi_\varepsilon^2u_\infty$ and using the boundedness of
$u_\infty$ gives
\[
    \int_{B_\Lambda} P_\infty\eta^2\chi_\varepsilon^2
    |\nabla u_\infty|^2
    \leq
    C\|u_\infty\|_{L^\infty(\operatorname{supp}\eta)}^2
    \left(
    \int_{B_\Lambda} P_\infty|\nabla\eta|^2
    +\int_{B_\Lambda} P_\infty|\nabla\chi_\varepsilon|^2
    \right).
\]
Letting $\varepsilon$ go to zero, we see from \cref{eq:vertex-capacity} that
$P_\infty^{1/2}\nabla u_\infty$ is locally square integrable across the
origin. Furthermore,  for an arbitrary
$\varphi\in C_c^\infty(B_\Lambda)$, use
$\varphi\chi_\varepsilon$ as a test function in  equation \cref{eq. punctured equation}.
Then, by \cref{eq:vertex-capacity}, we have
\begin{align*}
    \left|
    \int_{B_\Lambda}P_{\infty}\chi_\epsilon
    \nabla u_{\infty}\cdot\nabla \varphi
    \right|
    =&\left|
    \int_{B_\Lambda} P_\infty\varphi
    \nabla u_\infty\cdot\nabla\chi_\varepsilon
    \right|\\
    \leq &
\|\varphi\|_{L^\infty (\text{supp}\nabla \chi_\epsilon)}
\left(
\int_{B_{2\varepsilon}}P_\infty|\nabla u_\infty|^2
\right)^{1/2}
\left(
\int_{B_{2\epsilon}} P_\infty|\nabla\chi_\varepsilon|^2
\right)^{1/2}\to0.
\end{align*}
This proves the second assertion.

{\bf Step 5:}
Fix $0<r<\Lambda$.  By \cref{eq:uniform-Linf}, there holds
\[
    |u_j|+|u_\infty|\leq M_r
    \qquad\text{on }B_r.
\]
After passing to the almost-everywhere convergent subsequence, we have
\[
    |u_j^2-u_\infty^2|\to0
    \quad\text{almost everywhere in }B_r.
\]
It follows from the dominated convergence theorem and \cref{eq:BR-L1-ball} that
\begin{equation}
\int_{B_r}|u_j^2-u_\infty^2|Q_{R_j}\dd y
\leq
\int_{B_r}|u_j^2-u_\infty^2|Q_\infty\dd y
+2M_r^2
\|Q_{R_j}-Q_\infty\|_{L^1(B_r)}
\to0.
\label{eq:weighted-square-convergence}
\end{equation}
 Combining with the local boundedness of $u_\infty$ in $B_\Lambda$, it follows that
\[
    \int_{B_r}u_j^2\dd \nu_{R_j}
    \to
    \int_{B_r}u_\infty^2\dd \nu_\infty.
\]
Also, by \cref{eq:BR-L1-ball}, there holds
\[
    \nu_{R_j}(B_r)\to\nu_\infty(B_r)>0.
\]
This proves \cref{eq:weighted-mass-convergence}.

For finitely many sequences, the products
$u_{p,j}u_{q,j}$ are locally uniformly bounded and converge almost
everywhere to $u_{p,\infty}u_{q,\infty}$.  Repeating
\cref{eq:weighted-square-convergence} proves
\cref{eq:cross-mass-convergence}.

\end{proof}

\section{Simultaneous blow-down}
\label{sec:positive-avr-dimension}

In this section, we combine the distance comparison established in
Section~3 with the weighted convergence theorem from
\cref{sec:weighted-G} to prove the sharp dimension estimate in the
the case $\beta>0$. As discussed above, each $g$-harmonic function $h$ can be properly blown-down to a limit which is a harmonic function on $\mathcal{C}_a$. To get a dimension estimate, one still needs to ensure that this process is injective from $\mathcal{H}_d(g)$ to $\mathcal{H}_d(\mathcal{C}_a)$. The standard $L^2$ inner product can be used on balls of fixed radius to maintain linear independence. Therefore it is desirable to find a common sequence of radius $R_q=2^q R_0\to \infty$ (see Corollary \ref{cor:fixed-factor-control}), along which $g$-harmonic functions are simultaneously blown-down. This technique is inspired by Lin's argument for asymptotically conic elliptic operators \cite[Theorem 2.13]{L}. Moreover, this technique also helps to establish the surjectivity of blow-down process, thus leading to a precise value of $\mathcal{H}_d(g)$. 

Let $g=e^{2w}g_0$ be a complete conformal metric on $\mathbb R^n$, $n\geq3$, satisfying $\operatorname{Ric}_g\geq0$, and suppose that its asymptotic volume ratio $\beta$ is positive.  Then the asymptotic exponent of the conformal factor is $m\in[0,1)$, and we set $a:=1-m.$

By the exact volume-ratio formula recalled in the introduction \cref{eq. volume-ratio formula},
\[
    \beta=a^{n-1}.
\]
If $m=0$, then $\beta=1$; the equality case in the Bishop--Gromov volume
comparison implies that $(\mathbb R^n,g)$ is isometric to Euclidean space.
Thus the only case in which a strict dimension drop can occur is
\begin{equation}\notag
    0<m<1,
    \qquad 0<a<1,
\end{equation}
which we assume througout the section.

For any real number $d\geq0$, recall
\[
    \mathcal H_d(g)
    :=\left\{
    h:\Delta_gh=0,
    \quad |h(x)|\leq C_h\bigl(1+d_g(0,x)^d\bigr)
    \right\}.
\]
For a nonzero $g$-harmonic function $h$, define its normalized weighted
$L^2$ mass on the Euclidean ball $B_R\subset\mathbb R^n$ by
\begin{equation}\label{eq:def-Sh}
    S_h(R)
    :=
\frac{\displaystyle\int_{B_R}h^2\dd\mu_g}
        {\displaystyle\mu_g(B_R)},
\end{equation}
where $d\mu_g:=e^{nw(x)}\dd x$ is the volume measure of $(\mathbb{R}^n,g)$. This captures the $L^2$ growth rate of $h$ on Euclidean balls.

\subsection{The homogeneous spectrum of the limiting cone}
\ \\
We start with an analysis of the harmonic functions on the limiting cone \cref{eq. def of limiting cone}.
For each integer $\ell\geq0$, let $\mathcal Y_\ell$ be the space of spherical harmonics of degree $\ell$
on $\mathbb S^{n-1}$, and let
\[
    d_\ell:=\dim\mathcal Y_\ell
    =\binom{n+\ell-1}{\ell}-\binom{n+\ell-3}{\ell-2},
    \qquad
    \lambda_\ell:=\ell(\ell+n-2).
\]
Choose an orthonormal basis
$\{Y_{\ell,q}\}_{q=1}^{d_\ell}$ of $\mathcal Y_\ell$ in
$L^2(\mathbb S^{n-1},d\omega)$, where $d\omega:=|\mathbb S^{n-1}|^{-1}d\theta$ is the normalized spherical measure. We have the following homogeneous expansion of harmonic functions on the limiting cone.

\begin{proposition}
\label{prop:cone-homogeneous-expansion}
For each integer $\ell\geq0$, let $\sigma_\ell\geq0$ be the positive solution of
\[
    \sigma_\ell\bigl(\sigma_\ell+a(n-2)\bigr)
    =\ell(\ell+n-2).
\]
Equivalently,
\begin{equation}\label{eq:sigma-explicit}
    \sigma_\ell
    =\frac{-a(n-2)+
    \sqrt{a^2(n-2)^2+4\ell(\ell+n-2)}}{2}.
\end{equation}
Every $L_\infty$-harmonic function $v$ that is locally bounded at the cone
vertex has an expansion
\begin{equation}\label{eq:cone-expansion}
    v(r,\theta)
    =\sum_{\ell=0}^{\infty}\sum_{q=1}^{d_\ell}
    c_{\ell,q}r^{\sigma_\ell}Y_{\ell,q}(\theta),
    \qquad r=|x|,
\end{equation}
with smooth convergence on compact annuli.  Moreover,
\begin{equation}\label{eq:cone-average-expansion}
    \operatorname{Av}_{\infty,r}(v^2)
    =\sum_{\ell,q}
    \frac{an}{2\sigma_\ell+an}
    |c_{\ell,q}|^2r^{2\sigma_\ell}.
\end{equation}
Consequently, the function
\[
    t\longmapsto\log \operatorname{Av}_{\infty, e^t}(v^2)
\]
is convex whenever $v\not\equiv0$.
\end{proposition}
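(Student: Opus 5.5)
Our plan is separation of variables for $L_\infty u=\operatorname{div}(|y|^{-m(n-2)}\nabla u)=0$ on the punctured ball, followed by an exact radial integration for the averages. First we fix a compact annulus $\{r_1<|y|<r_2\}$ and use the ellipticity of $L_\infty$ there. The weight is smooth and positive away from $0$, so $v$ is smooth on $B_\Lambda\setminus\{0\}$ and can be expanded in the orthonormal basis as $v(r,\theta)=\sum_{\ell,q}v_{\ell,q}(r)Y_{\ell,q}(\theta)$, with $v_{\ell,q}(r)=\int_{\mathbb S^{n-1}}v(r,\theta)Y_{\ell,q}\,d\omega$. In polar coordinates,
\[
L_\infty v=r^{-m(n-2)}\Bigl(v_{rr}+\frac{(n-1)-m(n-2)}{r}v_r+\frac{1}{r^2}\Delta_{\mathbb S^{n-1}}v\Bigr).
\]
Pairing with $Y_{\ell,q}$ (justified since $v$ is smooth on annuli) gives the Euler ODE
\[
v''+\frac{1+a(n-2)}{r}v'-\frac{\lambda_\ell}{r^2}v=0,
\]
using $(n-1)-m(n-2)=1+a(n-2)$. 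The indicial equation $\sigma(\sigma-1)+(1+a(n-2))\sigma=\lambda_\ell$, i.e.\ $\sigma(\sigma+a(n-2))=\lambda_\ell$, has roots $\sigma_\ell\ge0$ and $\sigma_\ell^-=-a(n-2)-\sigma_\ell<0$. For $\ell=0$ the roots are $0$ and $-a(n-2)$, which are distinct because $n\ge3$ and $a>0$, so no logarithm appears. Hence $v_{\ell,q}=c_{\ell,q}r^{\sigma_\ell}+c'_{\ell,q}r^{\sigma_\ell^-}$.

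The main step is to kill the singular coefficients $c'_{\ell,q}$. Since $v$ is locally bounded near $0$, $|v_{\ell,q}(r)|\le \sup_{|y|=r}|v|$ stays bounded as $r\to0$. This is incompatible with $r^{\sigma^-_\ell}\to\infty$ unless $c'_{\ell,q}=0$. Note that the case $\ell=0$ needs $a(n-2)>0$. Thus each mode is $c_{\ell,q}r^{\sigma_\ell}$.

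Smooth convergence of the series on compact annuli inside $B_\Lambda$ comes from standard estimates. Interior elliptic estimates on annuli give $\sum_\ell \lambda_\ell^{k}|v_{\ell,q}(r)|^2<\infty$ for every $k$, uniformly for $r$ in compacta, because one can apply $\Delta_{\mathbb S^{n-1}}^k$ to the smooth $v$. Together with $\|Y_{\ell,q}\|_{C^j}\lesssim\lambda_\ell^{N}$ (Sobolev embedding on the sphere), this gives smooth convergence in $\theta$. Derivatives in $r$ follow from the explicit power form, or from the ODE.

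For \eqref{eq:cone-average-expansion}, we compute with $d\nu_\infty=r^{-mn}dy$ and orthonormality:
\[
\int_{B_r}v^2\,d\nu_\infty=|\mathbb S^{n-1}|\sum_{\ell,q}|c_{\ell,q}|^2\int_0^r s^{2\sigma_\ell+n-1-mn}\,ds=|\mathbb S^{n-1}|\sum|c_{\ell,q}|^2\frac{r^{2\sigma_\ell+an}}{2\sigma_\ell+an}.
\]
Here $n-mn=an$, and interchanging sum and integral is justified by monotone convergence. The same formula with only the $\ell=0$ mode and $v\equiv1$ gives $\nu_\infty(B_r)=|\mathbb S^{n-1}|r^{an}/(an)$. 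Dividing the two yields \eqref{eq:cone-average-expansion}.

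Finally, for convexity we write $\operatorname{Av}_{\infty,e^t}(v^2)=\sum_k b_k e^{2\sigma_k t}$ with $b_k\ge0$, not all zero. This is a nonnegative combination of exponentials, so its logarithm is convex: by Cauchy--Schwarz, $F F''\ge (F')^2$ for $F=\sum b_ke^{\alpha_kt}$. We expect the only delicate point to be the removal of the singular modes, which needs boundedness at the vertex and, for $\ell=0$, uses $a(n-2)\neq0$. Everything else is routine.
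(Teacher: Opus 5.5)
Your proposal is correct and takes essentially the same route as the paper. Both expand in spherical harmonics, reduce to the Euler ODE $b''+\frac{1+a(n-2)}{r}b'-\frac{\lambda_\ell}{r^2}b=0$ with indicial roots $\sigma_\ell$ and $-a(n-2)-\sigma_\ell$, discard the negative root using boundedness at the vertex, integrate against $d\nu_\infty=r^{an-1}\,dr\,d\theta$ using orthonormality, and get log-convexity from Cauchy--Schwarz. You also fill in details the paper leaves implicit: the coefficient bound $|v_{\ell,q}(r)|\le\sup_{|y|=r}|v|$, the distinctness of the roots when $\ell=0$, and the spectral decay argument behind smooth convergence on compact annuli.
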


\begin{proof}
In Euclidean polar coordinates the limiting equation
$L_\infty v=0$ is
\begin{equation}\label{eq:limit-equation-polar}
    v_{rr}
    +\frac{1+a(n-2)}{r}v_r
    +\frac1{r^2}\Delta_{\mathbb S^{n-1}}v=0.
\end{equation}
Expanding $v(r,\cdot)$ in spherical harmonics and projecting
\cref{eq:limit-equation-polar} onto $\mathcal Y_\ell$ gives the 
ODE
\[
    b''+\frac{1+a(n-2)}r b'
    -\frac{\lambda_\ell}{r^2}b=0.
\]
By setting $b(r)=r^t$, the above equation becomes a  quadratic equation of $t$ and the two roots are $\sigma_\ell$ and
$-a(n-2)-\sigma_\ell$. Then the local boundedness of $v$ at the vertex excludes the negative root. 
Standard spherical harmonic estimates give smooth
convergence on compact annuli.
This proves \cref{eq:cone-expansion}.

Since
\[
    d\nu_\infty
    =r^{-mn}r^{n-1}\dd r\dd \theta
    =r^{an-1}\dd r\dd \theta,
\]
orthogonality with respect to the normalized measure $d\omega$ yields
\cref{eq:cone-average-expansion}.  After setting $r=e^t$, the right-hand
side of \cref{eq:cone-average-expansion} is a positive sum of exponentials $e^{2\sigma_\ell t}$.  The logarithm
of such a sum is convex by a direct  calculation of the second derivative and the Cauchy-Schwarz inequality.
\end{proof}
{
\begin{lemma}
\label{lem:spectral-threshold}
Let $a\in(0,1)$. For  any positive number $d>0$, there exists a unique integer \begin{equation}\label{eq. Id upper bound}
    1\leq \bar{d} \leq
    \begin{cases}
         d &\text{ if } d\in\mathbb{Z},\\
         \lfloor d\rfloor+1
         &\text{ if } d\not\in\mathbb{Z}.
    \end{cases}
\end{equation}
such that
\begin{equation}\label{eq. def of Id}
    \sigma_{\bar{d}-1}\leq ad<\sigma_{\bar{d}},
\end{equation}
where  $\sigma_{\ell}$ is defined in \cref{eq:sigma-explicit}.
\end{lemma}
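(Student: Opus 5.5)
The plan is to exploit the monotonicity of $\ell\mapsto\sigma_\ell$ together with the values at the endpoints. From \cref{eq:sigma-explicit}, $\sigma_\ell$ is a strictly increasing function of $\ell$, since $\ell(\ell+n-2)$ is strictly increasing for $\ell\geq0$. Moreover $\sigma_0=0$ and $\sigma_\ell\to\infty$. Since $ad>0=\sigma_0$, the set $\{\ell\geq0:\sigma_\ell\leq ad\}$ is nonempty (it contains $0$) and finite. Let $\bar d-1$ be its largest element. Then $\sigma_{\bar d-1}\leq ad<\sigma_{\bar d}$ and $\bar d\geq1$. Uniqueness follows from strict monotonicity, because the intervals $[\sigma_{k-1},\sigma_k)$ are pairwise disjoint.

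The remaining content is the upper bound \cref{eq. Id upper bound}. It suffices to show
\[
\sigma_\ell> a\ell\qquad\text{for every integer }\ell\geq1.
\]
Indeed, set $\ell_0=d$ if $d\in\mathbb Z$, and $\ell_0=\lfloor d\rfloor+1$ otherwise, so that $\ell_0\geq d$ and $\ell_0\geq1$. The claim then gives $\sigma_{\ell_0}>a\ell_0\geq ad$. Hence $\ell_0\notin\{\ell:\sigma_\ell\leq ad\}$, so $\bar d-1<\ell_0$, i.e.\ $\bar d\leq\ell_0$.

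To prove the claim, use that $t\mapsto t(t+a(n-2))$ is strictly increasing for $t\geq0$. Then $\sigma_\ell>a\ell$ is equivalent to
\[
a\ell\bigl(a\ell+a(n-2)\bigr)<\ell(\ell+n-2),
\]
that is, $a^2\ell(\ell+n-2)<\ell(\ell+n-2)$. This holds because $a\in(0,1)$ and $\ell\geq1$. This is the only point where $a<1$ is used, and it is where the strictness comes from; the rest is bookkeeping. I do not expect a serious obstacle. The only care needed is the integer versus non-integer case distinction, which is handled by the choice of $\ell_0$ above.
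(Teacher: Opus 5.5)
Your proposal is correct and follows essentially the same route as the paper. Existence and uniqueness come from $\sigma_0=0$ and the strict monotonicity of $\ell\mapsto\sigma_\ell$, and the upper bound comes from $\sigma_\ell>a\ell$ for integers $\ell\geq1$, which is exactly where $a<1$ enters. You also spell out the verification of $\sigma_\ell>a\ell$ that the paper calls a direct calculation, and you merge the integer and non-integer cases through the single choice of $\ell_0$; this is a harmless streamlining.
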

\begin{proof}
Since $\sigma_0=0$, the existence and uniqueness of $\bar{d}$ satisfying \cref{eq. def of Id} are immediate.
For every integer $d\geq1$, a direct calculation by \cref{eq:sigma-explicit} yields
\[
    \sigma_d>ad,
\]
since $0<a<1$.
It follows that $\bar{d}-1<d$, and hence $\bar{d}\leq d$.

For $d>0$ and $d\not\in\mathbb{Z}$,  Since
\[
    \sigma_{\bar{d}-1}\leq
    ad<a(\lfloor d\rfloor +1)<\sigma_{\lfloor d\rfloor +1},
\]
we conclude that $\bar{d}\leq \lfloor d\rfloor +1$.
\end{proof}
Using the notion of $\bar{d}$, we now proceed to compute the dimension of the space of polynomial-growth harmonic functions on the limiting cone.
\begin{corollary}\label{cor. dim of harmonic on cone}
    Let $a\in(0,1)$. For any positive number $d>0$,  the polynomial-growth harmonic functions on the cone \cref{eq. def of limiting cone}
satisfies
\[
    \dim\mathcal H_d(\mathcal{C}_a)
    = \dim\mathcal H_{\bar{d}-1}(\mathbb{R}^n,g_0),
\]
where $\bar{d}$ is the integer defined in \cref{lem:spectral-threshold}.
\end{corollary}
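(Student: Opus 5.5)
The plan is to use the homogeneous expansion of \cref{prop:cone-homogeneous-expansion}, translate polynomial growth on $\mathcal{C}_a$ into a growth condition in the Euclidean variable $r=|y|$, and then count spherical harmonics.

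\textbf{Step 1: growth on the cone in the Euclidean variable.} On $\mathcal{C}_a$ the intrinsic distance to the vertex is $\rho=\frac1a r^a$. Hence $|v|\le C(1+\rho^d)$ is equivalent to $|v|\le C'(1+r^{ad})$.

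\textbf{Step 2: local boundedness at the vertex.} Any $v\in\mathcal H_d(\mathcal C_a)$ is bounded near the vertex, since the growth bound gives $|v|\le 2C$ for $\rho\le1$. So the expansion \eqref{eq:cone-expansion} applies to $v$.

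\textbf{Step 3: which modes survive.} Each coefficient is recovered by projection:
\[
c_{\ell,q}r^{\sigma_\ell}=\int_{\mathbb S^{n-1}}v(r,\theta)Y_{\ell,q}(\theta)\,d\omega .
\]
Using $\|Y_{\ell,q}\|_{L^1(d\omega)}\le 1$, this gives
\[
|c_{\ell,q}|\,r^{\sigma_\ell}\le C'(1+r^{ad}).
\]
Letting $r\to\infty$ forces $c_{\ell,q}=0$ whenever $\sigma_\ell>ad$.

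Conversely, each function $r^{\sigma_\ell}Y_{\ell,q}$ with $\sigma_\ell\le ad$ is $L_\infty$-harmonic and bounded by $C r^{ad}$ for $r\ge1$, so it lies in $\mathcal H_d(\mathcal C_a)$. These functions are linearly independent by $L^2(\mathbb S^{n-1})$-orthogonality.

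The sequence $\sigma_\ell$ is strictly increasing in $\ell$, by the explicit formula \eqref{eq:sigma-explicit}. Combined with \eqref{eq. def of Id}, the condition $\sigma_\ell\le ad$ holds exactly for $\ell\le \bar d-1$. Therefore
\[
\dim\mathcal H_d(\mathcal C_a)=\sum_{\ell=0}^{\bar d-1}d_\ell .
\]

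\textbf{Step 4: comparison with Euclidean space.} The classical fact is that $\mathcal H_k(\mathbb R^n,g_0)$, for an integer $k\ge0$, consists of the harmonic polynomials of degree at most $k$. These decompose as $\bigoplus_{\ell\le k}r^\ell\mathcal Y_\ell$, so
\[
\dim\mathcal H_k(\mathbb R^n,g_0)=\sum_{\ell=0}^{k}d_\ell .
\]
Taking $k=\bar d-1\ge0$ matches the count from Step 3 and gives the corollary. The same count agrees with the binomial formula quoted in the introduction.

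\textbf{Main point of care.} The only delicate step is that the projection in Step 3 is legitimate. This needs the expansion to converge in $L^2(\mathbb S^{n-1})$ on each sphere, which the smooth convergence on annuli in \cref{prop:cone-homogeneous-expansion} provides. Everything else is routine bookkeeping.
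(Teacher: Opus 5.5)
Your proposal is correct and follows the same route as the paper. You convert the growth condition via $\rho=\frac1a|x|^a$ into $|v|\le C|x|^{ad}$, apply the homogeneous expansion of \cref{prop:cone-homogeneous-expansion}, and count the modes with $\sigma_\ell\le ad$, which by \cref{eq. def of Id} means $\ell\le\bar d-1$. You also make explicit three steps the paper's short proof leaves implicit: the projection onto spherical harmonics, the strict monotonicity of $\sigma_\ell$, and the reverse inclusion showing that each $r^{\sigma_\ell}Y_{\ell,q}$ with $\sigma_\ell\le ad$ lies in $\mathcal H_d(\mathcal C_a)$.
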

\begin{proof}
    Note that the intrinsic distance on $\mathcal{C}_a$ and the Euclidean distance is related by
    \[
        \rho(x)=\frac{1}{a}|x|^a.
    \]
    Therefore, for $u\in \mathcal H_d(\mathcal{C}_a)$, there holds
    \[
        u(x)\leq C|x|^{ad},\quad 
        \forall |x|\geq 1.
    \]
    Then \cref{eq:cone-expansion} and \cref{eq. def of Id} yield that
    \[
        \dim\mathcal H_d(\mathcal{C}_a)
        =\sum_{i=0}^{\bar{d}-1}d_i
        =\dim\mathcal H_{\bar{d}-1}(\mathbb{R}^n,g_0).
    \]
\end{proof}
}

For a $g$-harmonic function $h$ on $(\mathbb{R}^n,g)$, we shall consider the following blow-down sequence of it:
\begin{equation}\label{def of blow down sequence}
    u_R(y):=\frac{h(Ry)}{S_h(R)^{1/2}},
\end{equation}
where $S_h(R)$ is defined in \cref{eq:def-Sh}.
{The delicacy of the normalization \cref{def of blow down sequence} lies in the connection between the weighted integral average of the blow-down sequence and  the $L^2$ growth ratio of the original function $h$:
}
\begin{equation}\label{eq:exact-scaling-Sh}
    \operatorname{Av}_{R,r}(u_R^2)
    =\frac{S_h(rR)}{S_h(R)},
    \qquad \forall r>0.
\end{equation}
In fact, by the change of variables,
\[
    \operatorname{Av}_{R,r}(u_R^2)
    =\frac{\int_{B_r}h^2(Ry) Q_R(y)\dd y}{S_h(R)\int_{B_r}Q_R(y)dy}
    =\frac{\int_{B_{Rr}}h^2(x)e^{nw(x)}\dd x}
    {S_h(R)\int_{B_{Rr}}e^{nw(x)}\dd x}
    =\frac{S_h(rR)}{S_h(R)}.
\]
{
This scaling identity is the bridge between the growth rate of $h$ and the
weighted convergence theorem (\cref{thm:weighted-G-compactness}).
We shall also see that the choice of normalization \cref{def of blow down sequence} is crucial for the dimension estimates by ensuring the injectivity of the blow-down limits of such sequences.
}

\subsection{Uniform upper bound of the $L^2$ growth ratio}
\ \\
We now use \cref{thm:weighted-G-compactness} to prove a propagation of the upper bound of the $L^2$ growth ratio in \cref{eq:exact-scaling-Sh}. This is an analogue of \cite[(2.15)]{L} in our setting.
\begin{lemma}
\label{prop:one-step-doubling}
Let $N>0$ satisfy
\[
    N\notin\{\sigma_0,\sigma_1,\ldots\}.
\]
Then there exists $R_0=R_0(N,g)>0$ such that, for every $g$-harmonic function
$h$ and every $R\geq R_0$ satisfying $ S_h(2R)\leq2^{2N}S_h(R)$, there holds
\[
    S_h(R)\leq2^{2N}S_h(R/2).
\]
where $S_h$ is defined in \cref{eq:def-Sh}.
\end{lemma}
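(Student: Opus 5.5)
The plan is to argue by contradiction and use a compactness argument through \cref{thm:weighted-G-compactness}, in the spirit of \cite[(2.15)]{L}. Suppose no such $R_0$ exists. Then there are $g$-harmonic functions $h_j\not\equiv0$ and radii $R_j\to\infty$ with
\[
S_{h_j}(2R_j)\leq 2^{2N}S_{h_j}(R_j),\qquad S_{h_j}(R_j)>2^{2N}S_{h_j}(R_j/2).
\]
Form the blow-down sequence $u_j(y):=h_j(R_jy)/S_{h_j}(R_j)^{1/2}$ as in \cref{def of blow down sequence}. Each $u_j$ solves $\operatorname{div}(P_{R_j}\nabla u_j)=0$ on $\mathbb R^n$. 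By the scaling identity \cref{eq:exact-scaling-Sh}, the two inequalities become
\[
\operatorname{Av}_{R_j,2}(u_j^2)\leq 2^{2N},\qquad \operatorname{Av}_{R_j,1}(u_j^2)=1,\qquad \operatorname{Av}_{R_j,1/2}(u_j^2)<2^{-2N}.
\]
The first bound is exactly hypothesis \cref{eq:weighted-L2-assumption} with $\Lambda=2$.

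Next we apply \cref{thm:weighted-G-compactness} with $\Lambda=2$. After passing to a subsequence, $u_j$ converges to some $u_\infty$ that is locally bounded at the origin and satisfies $L_\infty u_\infty=0$ in $B_2$. By \cref{eq:weighted-mass-convergence} at $r=1$ and $r=1/2$, together with an appropriate choice of $r$ near $2$, the limit satisfies
\[
\operatorname{Av}_{\infty,1}(u_\infty^2)=1,\qquad \operatorname{Av}_{\infty,1/2}(u_\infty^2)\leq 2^{-2N},\qquad \operatorname{Av}_{\infty,r}(u_\infty^2)\leq \liminf_j \operatorname{Av}_{R_j,r}(u_j^2)
\]
for each $r<2$. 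For the last quantity we do not have direct control, since the bound is only at $r=2$, which lies on the boundary. I would handle this by comparing $\nu_{R_j}(B_r)$ with $\nu_{R_j}(B_2)$, which gives $\operatorname{Av}_{R_j,r}\leq \frac{\nu_{R_j}(B_2)}{\nu_{R_j}(B_r)}\operatorname{Av}_{R_j,2}$. Using the $\nu_\infty$ volume ratio $(2/r)^{an}$ and letting $r\uparrow2$, this yields $\operatorname{Av}_{\infty,r}(u_\infty^2)\leq 2^{2N}(2/r)^{an}+o(1)$. In particular $u_\infty\not\equiv0$.

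Now we use the expansion of \cref{prop:cone-homogeneous-expansion}, which applies on $B_2$ because the ODE analysis is local. It gives
\[
F(t):=\log\operatorname{Av}_{\infty,e^t}(u_\infty^2)=\log\sum_{\ell,q}\frac{an}{2\sigma_\ell+an}|c_{\ell,q}|^2e^{2\sigma_\ell t},
\]
which is convex on $t<\log2$. The lower condition gives $F(0)-F(-\log2)\geq 2N\log2$, so the average slope of $F$ on $[-\log 2,0]$ is at least $2N$. By convexity, $F(t)-F(0)\geq 2N t$ for $t\in(0,\log2)$, and letting $t\uparrow\log2$ we get $\lim F\geq 2N\log2$. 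Combined with the upper bound, every inequality must then be an equality. Equality in this convexity argument forces $F$ to be affine with slope exactly $2N$ on $[-\log2,\log2)$. An affine $\log$ of a positive sum of exponentials happens only when a single exponent $2\sigma_\ell$ occurs, by the equality case of Cauchy–Schwarz in the second-derivative computation. Hence $2N=2\sigma_\ell$ for some $\ell$, contradicting $N\notin\{\sigma_\ell\}$.

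The main obstacle is the boundary issue at $r=2$. The theorem only provides convergence of averages for $r<\Lambda$, so the upper bound must be transferred carefully, either by the volume-ratio argument above or by taking $\Lambda$ slightly larger. Taking $\Lambda$ larger would require the hypothesis at a larger radius, which we do not have, so the limiting argument $r\uparrow 2$ must be done with care. Because the equality forcing runs on the closed interval, it works cleanly only with $\operatorname{Av}_{\infty,r}$ continuous in $r$ up to $2$. The constants $(2/r)^{an}\to1$ make the squeeze exact, and the strictness $N\neq\sigma_\ell$ then closes the argument. A subsidiary point to verify is that the strict inequality $S_{h_j}(R_j)>2^{2N}S_{h_j}(R_j/2)$ survives in the limit as $\leq$, which suffices for the argument.
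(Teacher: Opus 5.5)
Your proposal is correct and follows the paper's proof essentially step by step: contradiction, the blow-down normalized by $S_{h_j}(R_j)^{1/2}$, the weighted compactness theorem on $B_2$, transferring the bound at radius $2$ via the ratio $\nu_{R_j}(B_2)/\nu_{R_j}(B_r)$ and letting $r\to 2$, and then log-convexity of $\operatorname{Av}_{\infty,e^t}(u_\infty^2)$ with its equality case forcing $N=\sigma_\ell$. The only difference is cosmetic: you run the convexity squeeze through secant slopes and a limit $t\uparrow\log 2$, where the paper evaluates $F(\log 2)$ directly.
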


\begin{proof}
Suppose for contradiction that the assertion is false.  Then there exist $R_j\to\infty$ and
nonzero $g$-harmonic functions $h_j$ such that
\[
    S_{h_j}(2R_j)\leq2^{2N}S_{h_j}(R_j),\quad
    \text{but}\quad
    S_{h_j}(R_j/2)<2^{-2N}S_{h_j}(R_j).
\]
Define
\[
    u_j(y):=\frac{h_j(R_jy)}{S_{h_j}(R_j)^{1/2}}.
\]
The scaling identity \cref{eq:exact-scaling-Sh} gives
\[
    \operatorname{Av}_{R_j,1}(u_j^2)=1,
    \qquad
    \operatorname{Av}_{R_j,2}(u_j^2)\leq2^{2N},
    \qquad
    \operatorname{Av}_{R_j,1/2}(u_j^2)<2^{-2N}.
\]
Applying \cref{thm:weighted-G-compactness} on $B_2$,  after passing to
a subsequence, $u_j$ converges locally to a nonzero $L_\infty$-harmonic
function $u_\infty$ in $B_2$. 
Moreover, \cref{eq:weighted-mass-convergence} gives
\begin{equation}\label{eq:limit-masses-one-half-one}
    \mathrm{Av}_{\infty,1}(u_\infty^2)=1,
    \qquad
    \mathrm{Av}_{\infty,\frac{1}{2}}(u_\infty^2)
    \leq2^{-2N}.
\end{equation}
For every $r<2$, monotonicity of integration over balls gives
\[
    \operatorname{Av}_{R_j,r}(u_j^2)
    \leq
    \frac{\nu_{R_j}(B_2)}{\nu_{R_j}(B_r)}
    \operatorname{Av}_{R_j,2}(u_j^2).
\]
Using \cref{prop:weight-convergence}, \cref{thm:weighted-G-compactness} and then letting
$j\to\infty$ yields
\[
   \operatorname{Av}_{\infty,r}(u_{\infty}^2)
    \leq
    \frac{\nu_\infty(B_2)}{\nu_\infty(B_r)}2^{2N}.
\]
Letting $r$ go to $2$, we obtain
\begin{equation}\label{eq:limit-mass-two}
    \operatorname{Av}_{\infty,2}(u_{\infty}^2)
    \leq2^{2N}.
\end{equation}

Consider
\[
    F(t):=\log \operatorname{Av}_{\infty, e^t}(u_\infty^2)
\]
Then
\cref{eq:limit-masses-one-half-one} and \cref{eq:limit-mass-two} could be rewritten as
\[
    F(0)=0,
    \qquad
    F(\log 2)\leq2N\log 2,
    \qquad
    F(-\log 2)\leq-2N\log 2.
\]
By \cref{prop:cone-homogeneous-expansion}, convexity of $F$ yields
\[
    0=F(0)
    \leq\frac{F(-\log 2)+F(\log 2)}2
    \leq0.
\]
Thus equality holds throughout and $F$ is affine on
$[-\log 2,\log 2]$ with slope $2N$.  By
\cref{eq:cone-average-expansion}, the logarithm is affine only when all
nonzero homogeneous components have the same exponent. Then the slope of $F$ forces $2N=2\sigma_l$ for some $l\geq 0$, contradicting
$N\notin\{\sigma_0,\sigma_1,\ldots\}$. 
This finishes the proof.
\end{proof}

From now on, fix a positive number $d>0$. It follows from \cref{lem:spectral-threshold} that we can  choose a positive number $N$ satisfying
\begin{equation}\label{eq:choice-of-N}
   \sigma_{\bar{d}-1}\leq ad<N<\sigma_{\bar{d}}.
\end{equation}

Using distance comparison from Section~3, we turn intrinsic growth rate to a growth rate with respect to the Euclidean distance.
\begin{lemma}
\label{lem:intrinsic-to-Euclidean-growth}
Let $h\in\mathcal H_d(g)$. Then, for every $\varepsilon>0$, there exist constants
$C_{h,\varepsilon}$ and $R_{h,\varepsilon}$ such that
\begin{equation}\label{eq:Euclidean-pointwise-growth}
    \sup_{|x|\leq R}|h(x)|
    \leq C_{h,\varepsilon}R^{d(a+\varepsilon)},
    \quad \forall R\geq R_{h,\epsilon}.
\end{equation}
Consequently, there exist $N'<N$ and $C_h<\infty$, such that
\[
    S_h(R)
    \leq C_{h}R^{2N'},\quad\forall R\geq R_{h,\epsilon}.
\]
\end{lemma}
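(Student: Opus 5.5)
The plan is to reduce everything to the distance comparison of \cref{thm:distance}, and then choose $\varepsilon$ small enough to stay below $N$.

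\textbf{Step 1 (pointwise Euclidean growth).} By \cref{thm:distance}, $\ln r(x)/\ln|x|\to a$ as $x\to\infty$. Hence for each $\varepsilon>0$ there is $\rho_\varepsilon$ such that $r(x)\leq |x|^{a+\varepsilon}$ whenever $|x|\geq\rho_\varepsilon$. On the compact ball $\{|x|\leq\rho_\varepsilon\}$, $r$ is bounded by a constant $D_\varepsilon$. So for $R\geq\rho_\varepsilon$ and $|x|\leq R$ we get $r(x)\leq \max\{D_\varepsilon,R^{a+\varepsilon}\}$. The growth assumption $|h(x)|\leq C_h(1+r(x)^d)$ then gives $\sup_{|x|\leq R}|h|\leq C_{h,\varepsilon}R^{d(a+\varepsilon)}$ once $R^{a+\varepsilon}\geq D_\varepsilon$, which proves \cref{eq:Euclidean-pointwise-growth}.

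\textbf{Step 2 (from sup to $S_h$).} Since $S_h(R)$ is a weighted average of $h^2$ over $B_R$ with respect to the probability measure $\mu_g|_{B_R}/\mu_g(B_R)$, we have directly
\[
S_h(R)\leq \sup_{B_R}h^2\leq C_{h,\varepsilon}^2R^{2d(a+\varepsilon)}.
\]
No volume estimate is needed here.

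\textbf{Step 3 (choice of $\varepsilon$).} Since $ad<N$ by \cref{eq:choice-of-N}, fix $\varepsilon>0$ with $d(a+\varepsilon)<N$ and set $N':=d(a+\varepsilon)$. If $d=0$ the statement is trivial. The resulting constants then depend only on $h$ once $\varepsilon$ is fixed.

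\textbf{Main obstacle.} The only genuine input is the limit in \cref{thm:distance}, in particular the upper bound from \cref{proposition. third part}. That is already proven, so the remaining work is bookkeeping: handling the compact region near the origin, and making sure the chosen $\varepsilon$ keeps $N'$ strictly less than $N$.
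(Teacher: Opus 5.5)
Your proposal is correct and follows essentially the same route as the paper. You convert $\ln r(x)/\ln|x|\to a$ from \cref{thm:distance} into $\sup_{|x|\le R} r(x)\le C_\varepsilon R^{a+\varepsilon}$, feed this into the defining growth bound, bound the average $S_h(R)$ by $\sup_{B_R}h^2$, and choose $\varepsilon$ so that $N'=d(a+\varepsilon)<N$ using $ad<N$ from \cref{eq:choice-of-N}.
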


\begin{proof}
Theorem\ref{thm:distance} states that
\[
    \frac{\ln r(x)}{\ln|x|}\to a
    \qquad\text{as }|x|\to\infty.
\]
Hence, for every $\varepsilon>0$, there exists $R_\varepsilon$ such that
\[
    r(x)\leq |x|^{a+\varepsilon}
    \qquad\text{whenever }|x|\geq R_\varepsilon.
\]
After increasing the constant to absorb the compact region
$B_{R_\varepsilon}$, we obtain
\[
    \sup_{|x|\leq R} r(x)
    \leq C_\varepsilon R^{a+\varepsilon}
\]
for all sufficiently large $R$.  The defining growth bound for
$h\in\mathcal H_d(g)$ now gives \cref{eq:Euclidean-pointwise-growth}.
Finally, it follows  from \cref{eq:Euclidean-pointwise-growth} that
\[
    S_h(R)\leq C_{h,\epsilon}R^{2d(a+\varepsilon)}.
\]
Since $N>ad$, choose $\varepsilon>0$ so small that $N':=d(a+\varepsilon)<N$ and the conclusion follows.
\end{proof}

\begin{proposition}
\label{prop:uniform-dyadic-doubling}
Let $R_0=R_0(N,g)$ be the radius in
\cref{prop:one-step-doubling}.  Then, for every
$h\in\mathcal H_d(g)$ and every integer $q\geq0$,
\begin{equation}\label{eq:uniform-dyadic-doubling}
    S_h(2^{q+1}R_0)
    \leq2^{2N}S_h(2^qR_0).
\end{equation}
\end{proposition}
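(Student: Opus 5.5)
The argument is a backward induction: the bound holds at very large dyadic scales by the growth estimate, and Lemma \ref{prop:one-step-doubling} carries it down one scale at a time to $q=0$.

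Fix $h\in\mathcal H_d(g)$, nonzero (for $h\equiv 0$ the inequality is trivial). Write $s_q:=S_h(2^qR_0)>0$ and consider the statement
\[
P(q):\qquad s_{q+1}\leq 2^{2N}s_q .
\]
The plan has two steps.

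\textbf{Step 1: $P(q)$ holds for infinitely many $q$.} Suppose instead that $P(q)$ fails for every $q\geq q_0$, i.e.\ $s_{q+1}>2^{2N}s_q$ for all $q\geq q_0$. Iterating gives
\[
s_q\geq 2^{2N(q-q_0)}s_{q_0}=c\,(2^qR_0)^{2N}
\]
with $c>0$. On the other hand, Lemma \ref{lem:intrinsic-to-Euclidean-growth} gives $s_q\leq C_h(2^qR_0)^{2N'}$ for all large $q$, with $N'<N$. These two bounds are incompatible as $q\to\infty$. Hence the set $\{q: P(q)\text{ holds}\}$ is unbounded.

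\textbf{Step 2: downward propagation.} Suppose $P(q)$ holds for some $q\geq 1$. Apply Lemma \ref{prop:one-step-doubling} with $R=2^qR_0\geq R_0$. Its hypothesis $S_h(2R)\leq 2^{2N}S_h(R)$ is exactly $P(q)$. Its conclusion $S_h(R)\leq 2^{2N}S_h(R/2)$ reads $s_q\leq 2^{2N}s_{q-1}$, which is $P(q-1)$. The lemma applies because $N\notin\{\sigma_\ell\}$: by \eqref{eq:choice-of-N}, $N$ lies strictly between $\sigma_{\bar d-1}$ and $\sigma_{\bar d}$, and the $\sigma_\ell$ are strictly increasing. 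The radius $R_0=R_0(N,g)$ is uniform in $h$, so the lemma can be applied at every dyadic scale. Starting from any $q$ supplied by Step 1, repeated application yields $P(q')$ for all $q'\leq q$. Since such $q$ are arbitrarily large, $P(q)$ holds for every $q\geq 0$.

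The main obstacle is already contained in Lemma \ref{prop:one-step-doubling}, namely the compactness argument together with the convexity and rigidity on the cone. What remains to check here is only bookkeeping. First, the radius $R_0$ must not depend on $h$, which the lemma provides. Second, the threshold $R_{h,\varepsilon}$ in Lemma \ref{lem:intrinsic-to-Euclidean-growth} is allowed to depend on $h$; this causes no difficulty, because Step 1 only uses large $q$ and Step 2 propagates downward without any reference to $R_{h,\varepsilon}$.
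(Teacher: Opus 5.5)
Your proposal is correct and follows the paper's proof essentially verbatim. The paper also first shows, via Lemma~\ref{lem:intrinsic-to-Euclidean-growth}, that the doubling inequality holds at arbitrarily large dyadic scales: otherwise iteration would force $S_h(2^QR_0)\geq c\,2^{2NQ}$, contradicting the growth bound with exponent $2N'<2N$. It then propagates the inequality down to $q=0$ by repeatedly applying Lemma~\ref{prop:one-step-doubling}, whose radius $R_0$ does not depend on $h$.
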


\begin{proof}
Fix a nonzero $h\in\mathcal H_d(g)$. We claim that   there are arbitrarily large integers $Q$
for which
\begin{equation}\label{eq:existence-good-dyadic-scales}
    S_h(2^{Q+1}R_0)
    \leq2^{2N}S_h(2^QR_0).
\end{equation}
In fact, if the reverse strict inequality held for all $Q\geq Q_0$, an iteration would yield
\[
    S_h(2^QR_0)
    \geq 2^{2N(Q-Q_0)}S_h(2^{Q_0}R_0),
\]
contradicting \cref{lem:intrinsic-to-Euclidean-growth}.

Starting from any $Q$ satisfying
\cref{eq:existence-good-dyadic-scales}, apply
\cref{prop:one-step-doubling} and we get that \cref{eq:uniform-dyadic-doubling} holds for any $0\leq q\leq Q$. Since we proved that \cref{eq:existence-good-dyadic-scales} holds for arbitrarily large integers $Q$, the proof is complete.
\end{proof}
Now we are ready to derive the following crucial $L^2$ growth ratio estimate, which would pass the growth order of a harmonic function to a uniform upper bound of the weighted integral average of the blow-down sequence by \cref{eq:exact-scaling-Sh}.

\begin{corollary}
\label{cor:fixed-factor-control}
For every fixed $T\geq1$, there exists $C_T<\infty$, depending only on
$T,N,$ and $g$, such that,
for every nonzero $h\in\mathcal H_d(g)$ and every dyadic radius
$R=2^qR_0$,
\begin{equation}\label{eq:fixed-factor-control}
    \frac{S_h(TR)}{S_h(R)}\leq C_T
\end{equation}
  Moreover, for every integer $s\geq0$,
\begin{equation}\label{eq:exact-dyadic-growth-control}
    \frac{S_h(2^sR)}{S_h(R)}\leq2^{2Ns}.
\end{equation}
\end{corollary}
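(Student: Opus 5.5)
The plan is to obtain both estimates by telescoping the one-step dyadic bound \cref{eq:uniform-dyadic-doubling} of \cref{prop:uniform-dyadic-doubling}. That proposition holds for every $q\geq0$, uniformly in $h\in\mathcal H_d(g)$, with $R_0=R_0(N,g)$ the radius from \cref{prop:one-step-doubling}.

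\textbf{Dyadic estimate.} Fix a dyadic radius $R=2^qR_0$ and an integer $s\geq0$. Write
\[
\frac{S_h(2^sR)}{S_h(R)}=\prod_{i=0}^{s-1}\frac{S_h(2^{q+i+1}R_0)}{S_h(2^{q+i}R_0)}.
\]
Applying \cref{eq:uniform-dyadic-doubling} with index $q+i$ bounds each factor by $2^{2N}$, which gives \cref{eq:exact-dyadic-growth-control}. The quotients make sense because $h\not\equiv0$ is harmonic, hence real analytic. So it cannot vanish on any open set, and $S_h(R)>0$ for every $R>0$.

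\textbf{Non-dyadic factor $T$.} Here one must pass from $TR$ to a dyadic radius, and $S_h$ is not monotone in $R$. Choose $s=\lceil\log_2T\rceil$, so that $TR\leq 2^sR$. Since $B_{TR}\subset B_{2^sR}$, we have
\[
S_h(TR)\leq \frac{\mu_g(B_{2^sR})}{\mu_g(B_{TR})}\,S_h(2^sR)\leq \frac{\mu_g(B_{2^sR})}{\mu_g(B_R)}\,2^{2Ns}\,S_h(R),
\]
using $T\geq1$ and the dyadic estimate.

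It remains to bound the Euclidean-ball volume ratio $\mu_g(B_{2^sR})/\mu_g(B_R)$ uniformly for $R=2^qR_0$. After the change of variables $x=Ry$ this ratio equals $\nu_R(B_{2^s})/\nu_R(B_1)$. By \cref{eq:BR-L1-ball} it converges to $\nu_\infty(B_{2^s})/\nu_\infty(B_1)=2^{san}$ as $R\to\infty$. Hence it is bounded for all sufficiently large $R$. Only finitely many dyadic radii remain, and at each of them the ratio is a finite positive number depending only on $g$, so the ratio is bounded for every $q\geq0$.

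This yields $C_T=2^{2Ns}\sup_q \nu_{2^qR_0}(B_{2^s})/\nu_{2^qR_0}(B_1)$, which depends only on $T$, $N$ and $g$. The only point requiring care is this uniform volume-ratio bound. If one prefers to avoid the $L^1$ convergence, the bound can instead be obtained from the inclusions $B_{\underline\rho(r)}\subset B^g(0,r)\subset B_{\bar\rho(r)}$ together with Bishop--Gromov and $\beta>0$. The weight-convergence route is cleaner, so I would use \cref{prop:weight-convergence}.
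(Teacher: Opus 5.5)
Your proposal is correct and follows essentially the paper's own argument. You telescope \cref{eq:uniform-dyadic-doubling} for the dyadic bound, pass from $TR$ to $2^{s}R$ by inclusion of balls, control the resulting volume ratio $\nu_R(B_{2^s})/\nu_R(\cdot)$ through \cref{prop:weight-convergence} at large dyadic scales, and absorb the finitely many remaining radii into the constant. One caution: the alternative you mention, via $B_{\underline\rho(r)}\subset B^g(0,r)\subset B_{\bar\rho(r)}$ and Bishop--Gromov, would not work with the tools of Section 3 alone, since \cref{proposition. Harnack ineq.} only compares $\ln\bar\rho$ and $\ln\underline\rho$ up to a factor $e^{o(1)}$ and so gives no bound on $\bar\rho/\underline\rho$; the weight-convergence route you chose is the right one.
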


\begin{proof}
The second assertion follows by iterating
\cref{eq:uniform-dyadic-doubling}.  For the first assertion, let
 $k\geq0$ be the integer satisfying $2^{k-1}<T\leq2^k$.  Since the integral of
$h^2$ is monotone under inclusion of balls,
\[
    S_h(TR)
    \leq
    \frac{\mu_g(B_{2^kR})}{\mu_g(B_{TR})}
    S_h(2^kR).
\]
By the change of variables used in \cref{eq:exact-scaling-Sh}, we have
\[
    \frac{\mu_g(B_{2^kR})}{\mu_g(B_{TR})}
    =\frac{\nu_R(B_{2^k})}{\nu_R(B_T)}.
\]
Then \cref{eq:conformal-lower-bound} and \cref{prop:weight-convergence}  show that this ratio is bounded for all $R\geq R_{2^k}$ by a constant depending
only on $T$ and $g$.  Enlarging the constant to cover the finitely many
remaining dyadic radii and using
\cref{eq:exact-dyadic-growth-control} proves
\cref{eq:fixed-factor-control}.
\end{proof}

\subsection{Simultaneous blow-down and the dimension estimate}
\ \\
We can now complete the proof of Theorem \ref{thm:T2}. We divide into two parts: Theorem~\ref{thm:injective} and Theorem~\ref{thm:surjective}.

\begin{theorem}
\label{thm:injective}
Let $g=e^{2w}g_0$ be a complete non-flat conformal metric on
$\mathbb R^n$, $n\geq3$, satisfying $\operatorname{Ric}_g\geq0$ and
$\beta\in (0,1)$.  Then, for any positive number $d>0$,
\begin{equation}\label{eq:dimension-drop-final}
    \dim\mathcal H_d(g)
    \leq \dim\mathcal H_{d}(\mathcal{C}_a)
    =\dim \mathcal{H}_{\bar{d}-1}(g_0)\leq 
    \begin{cases}
        \dim\mathcal H_{d-1}(g_0), &\text{if }
        d\in\mathbb{Z},\\
        \dim\mathcal H_d(g_0),
        &\text{if }
        d\not\in\mathbb{Z},
    \end{cases}
\end{equation}
where $a=\beta^{\frac{1}{n-1}}$.
\end{theorem}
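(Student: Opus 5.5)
The equality $\dim\mathcal H_d(\mathcal C_a)=\dim\mathcal H_{\bar d-1}(g_0)$ is \cref{cor. dim of harmonic on cone}. The final inequality follows from the bound \cref{eq. Id upper bound} on $\bar d$ in \cref{lem:spectral-threshold}, together with the monotonicity of $k\mapsto\dim\mathcal H_k(g_0)$. So the content to prove is the first inequality $\dim\mathcal H_d(g)\le\dim\mathcal H_d(\mathcal C_a)$. I would prove it by simultaneous blow-down along the dyadic radii $R_q=2^qR_0$ from \cref{cor:fixed-factor-control}, with $N$ chosen as in \cref{eq:choice-of-N}.

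Let $h_1,\dots,h_k\in\mathcal H_d(g)$ be linearly independent; it suffices to show $k\le\dim\mathcal H_{\bar d-1}(g_0)$. For each $q$ I perform Gram–Schmidt on $\{h_i\}$ with respect to the inner product $\langle f_1,f_2\rangle_q:=\mu_g(B_{R_q})^{-1}\int_{B_{R_q}}f_1f_2\,\dd\mu_g$. This produces $h_{1,q},\dots,h_{k,q}$ spanning the same space, orthonormal in $\langle\cdot,\cdot\rangle_q$. Each $h_{i,q}$ is a nonzero element of $\mathcal H_d(g)$ with $S_{h_{i,q}}(R_q)=1$.

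Set $u_{i,q}(y):=h_{i,q}(R_qy)$. By \cref{eq:exact-scaling-Sh}, $\operatorname{Av}_{R_q,1}(u_{i,q}u_{j,q})=\delta_{ij}$. Since \cref{cor:fixed-factor-control} holds uniformly for every nonzero $h\in\mathcal H_d(g)$, we get $\operatorname{Av}_{R_q,T}(u_{i,q}^2)\le C_T$ for each fixed $T$, independently of $q$. Applying \cref{thm:weighted-G-compactness} on $B_\Lambda$ for $\Lambda=2,3,\dots$ and extracting a diagonal subsequence, I obtain limits $u_{i,\infty}$ that are $L_\infty$-harmonic on all of $\mathbb R^n$ and locally bounded at the vertex. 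By \cref{eq:cross-mass-convergence}, $\operatorname{Av}_{\infty,1}(u_{i,\infty}u_{j,\infty})=\delta_{ij}$, so the limits are linearly independent.

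It remains to show each $u_{i,\infty}\in\mathcal H_d(\mathcal C_a)$. By \cref{eq:exact-dyadic-growth-control} and the scaling identity, $\operatorname{Av}_{R_q,2^s}(u_{i,q}^2)\le 2^{2Ns}$ for all $s\ge0$. Passing to the limit with \cref{eq:weighted-mass-convergence} (taking $\Lambda>2^s$) gives $\operatorname{Av}_{\infty,2^s}(u_{i,\infty}^2)\le 2^{2Ns}$. Inserting this into \cref{eq:cone-average-expansion}, every coefficient $c_{\ell,q}$ with $\sigma_\ell>N$ must vanish. Since $N<\sigma_{\bar d}$, the expansion \cref{eq:cone-expansion} leaves only modes with $\ell\le\bar d-1$. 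Because $\sigma_{\bar d-1}\le ad$ and $\rho=|x|^a/a$ on the cone, these functions grow at most like $\rho^d$. Hence $u_{i,\infty}\in\mathcal H_d(\mathcal C_a)$, which gives $k\le\dim\mathcal H_d(\mathcal C_a)$.

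The main obstacle is the uniformity used in the compactness step. The Gram–Schmidt combinations change with $q$, so their growth constants $C_h$ are not uniform, and only the scale-uniform doubling of \cref{prop:uniform-dyadic-doubling} rescues this. That proposition holds for every $h\in\mathcal H_d(g)$ with the same $R_0$, and I need to check carefully that its hypotheses are indeed independent of $h$. The rest is a direct application of \cref{thm:weighted-G-compactness}.
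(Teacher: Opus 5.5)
Your proposal is correct and follows the paper's proof essentially step by step. The steps match: scale-dependent orthonormalization at $R_q=2^qR_0$, uniform control from \cref{cor:fixed-factor-control}, simultaneous blow-down via \cref{thm:weighted-G-compactness}, and spectral truncation from the dyadic bound. The uniformity you flagged does hold: \cref{prop:uniform-dyadic-doubling} uses the $h$-dependent growth bound only to find arbitrarily large good scales, and then propagates downward with the $h$-independent $R_0$ of \cref{prop:one-step-doubling}. The only detail you leave implicit is that $\langle\cdot,\cdot\rangle_q$ is positive definite on $\mathcal H_d(g)$, which the paper gets from unique continuation.
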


\begin{proof}
Let $V\subset\mathcal H_d(g)$ be an arbitrary finite-dimensional subspace,
and let $k:=\dim V.$
Choose $N$ as in \cref{eq:choice-of-N}, let $R_0$ be the
radius from \cref{prop:one-step-doubling}, and set $R_j:=2^jR_0.$
For each $j$, define an inner product on $V$ by
\[
    \langle h_1,h_2\rangle_{R_j}
    :=\frac{1}{\mu_g(B_{R_j})}
      \int_{B_{R_j}}h_1h_2\dd\mu_g.
\]
It is positive definite: if $\langle h,h\rangle_{R_j}=0$, then the smooth
function $h$ vanishes on the open ball $B_{R_j}$, and unique continuation
implies $h\equiv0$.

Choose an $\langle\cdot,\cdot\rangle_{R_j}$-orthonormal basis
$ \{h_{1,j},\ldots,h_{k,j}\}$ of $V$, 
and define the blow-down sequence
\[
    u_{p,j}(y):=h_{p,j}(R_jy),
    \qquad 1\leq p\leq k.
\]
The normalization gives $S_{h_{p,j}}(R_j)=\langle h_{p,j},h_{p,j}\rangle_{R_j}=1$ and
\begin{equation}\label{eq:orthogonality-before-limit}
    \operatorname{Av}_{R_j,1}(u_{p,j}u_{q,j})
    =\langle h_{p,j},h_{q,j}\rangle_{R_j}
    =\delta_{pq}.
\end{equation}

For each fixed $T\geq1$, \cref{cor:fixed-factor-control} gives
\[
    \operatorname{Av}_{R_j,T}(u_{p,j}^2)
    =\frac{S_{h_{p,j}}(TR_j)}{S_{h_{p,j}}(R_j)}
    \leq C_T,
\]
where $C_T$ is independent of $p$ and $j$.  Therefore, we could apply
\cref{thm:weighted-G-compactness} simultaneously to the finite
family $\{u_{p,j}\}_{p=1}^k$ on $B_T$, first for each integer
$T>1$ and then by a diagonal argument.  After passing to a common
subsequence, we obtain globally defined harmonic functions
\[
    u_{1,\infty},\ldots,u_{k,\infty}
\]
on the limiting cone \cref{eq. def of limiting cone}.  Then \cref{eq:cross-mass-convergence} and \cref{eq:orthogonality-before-limit} give
\[
    \operatorname{Av}_{\infty,1}
    (u_{p,\infty}u_{q,\infty})
    =\delta_{pq}.
\]
Hence the limiting functions are linearly independent.

The sharper dyadic estimate
\cref{eq:exact-dyadic-growth-control} gives, for every integer $s\geq0$,
\[
    \operatorname{Av}_{R_j,2^s}(u_{p,j}^2)
    \leq2^{2Ns}.
\]
Passing to the limit in a ball of radius larger than $2^s$ yields
\begin{equation}\label{eq:dyadic-bound-after-limit}
    \operatorname{Av}_{\infty,2^s}(u_{p,\infty}^2)
    \leq2^{2Ns}.
\end{equation}
Expand $u_{p,\infty}$ as in
\cref{prop:cone-homogeneous-expansion}.  
By \cref{eq:cone-average-expansion} and \cref{eq:dyadic-bound-after-limit}, 
 every $u_{p,\infty}$ belongs to
\[
    \bigoplus_{\sigma_\ell<N}
    \left\{r^{\sigma_\ell}Y:Y\in\mathcal Y_\ell\right\}.
\]
By \cref{lem:spectral-threshold} and the choice of $N$ (see \cref{eq:choice-of-N}), the dimension of this space is
\[
    \sum_{\ell=0}^{\bar{d}-1}d_\ell.
\]
Since the $k$ functions $u_{1,\infty},\ldots , u_{k,\infty}$ are linearly independent and the subspace $V\subset \mathcal H_d(g) $ is arbitrary, we get:
\[
    \dim\mathcal H_d(g)
    \leq\sum_{\ell=0}^{\bar{d}-1}d_\ell
    =\dim\mathcal H_{\bar{d}-1}(g_0)
    \leq 
    \begin{cases}
        \dim\mathcal H_{d-1}(g_0), &\text{if }
        d\in\mathbb{Z},\\
        \dim\mathcal H_{\lfloor d\rfloor}(g_0)
        =\dim\mathcal H_{d}(g_0),
        &\text{if }
        d\not\in\mathbb{Z},
    \end{cases}
\]
where we used \cref{eq. Id upper bound} in the last inequality. Combining this with \cref{cor. dim of harmonic on cone} proves \cref{eq:dimension-drop-final}.
\end{proof}

\subsection{Construction of polynomial growth harmonic functions}
\ \\
In this subsection, we shall show that the reverse of the first inequality in \cref{eq:dimension-drop-final} also holds true. We start with a global convergence result for solutions to the equations $\operatorname{div}(P_R\nabla u)=0$, which upgrades the interior convergence in \cref{thm:weighted-G-compactness} by identifying the limiting boundary trace.

\begin{lemma}\label{lem. convergence up to bdry}
    Let $R_j\to\infty$. Assume  $\Lambda\geq1$ and $k\in\mathbb{Z}_+$. Let $u_j$ be weak solutions of
\begin{equation}\label{eq:uj-equation boundary value}
\begin{cases}
    \operatorname{div}(P_{R_j}\nabla u_j)=0
    &\text{in }B_\Lambda,\\
    u_j=\sum_{i=1}^k c_{i,j}Y_i
    &\text{on }\partial B_\Lambda.
\end{cases}
\end{equation}
where $Y_i,\ 1\leq i\leq k,$ are fixed smooth functions on $\partial B_\Lambda$ and $c_{i,j}$ are constants with a uniform bound
\[
    |c_{i,j}|\leq C,\qquad 
    \forall i\in\{1,\ldots,k\},\  
    \forall j\geq 1.
\]
Assume
\[
    \sup_j\operatorname{Av}_{R_j,\Lambda}(u_j^2)<\infty.
\]
Then, after passing to a subsequence, there exists a function $u_\infty$
satisfying
\[
    u_j\rightharpoonup u_\infty
    \quad\text{in }H^1(B_{\Lambda}),\quad 
    \text{and }\quad 
    u_{\infty}
    =\sum_{i=1}^k c_{i,\infty}Y_i
    \quad 
    \text{on }\partial B_\Lambda
\]
in the Sobolev trace sense, where
\[
    c_{i,\infty}:=\lim_{j\to+\infty} c_{i,j},\quad \text{for } i=1,\dots,k.
\]
\end{lemma}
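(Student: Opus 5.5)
The plan is to upgrade the interior bound of \cref{thm:weighted-G-compactness} to a global $H^1(B_\Lambda)$ bound. The boundary data make this possible. First I will pass to a subsequence along which $c_{i,j}\to c_{i,\infty}$; this is possible because the $c_{i,j}$ are uniformly bounded. Next I fix an extension $\Phi_j:=\sum_{i=1}^k c_{i,j}\widetilde Y_i$, where each $\widetilde Y_i\in C^\infty(\overline{B_\Lambda})$ is a smooth extension of $Y_i$ supported away from the origin, say in the annulus $A_{\Lambda/2,\Lambda}$. Then $\Phi_j\to\Phi_\infty:=\sum_i c_{i,\infty}\widetilde Y_i$ strongly in $H^1(B_\Lambda)$, and the traces satisfy $\Phi_j|_{\partial B_\Lambda}=\sum_i c_{i,j}Y_i$.

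The energy estimate uses the fact that $u_j$ minimizes the weighted Dirichlet energy $\int_{B_\Lambda}P_{R_j}|\nabla v|^2$ among $v$ with the same trace. Comparing with $\Phi_j$ gives
\[
\int_{B_\Lambda}P_{R_j}|\nabla u_j|^2\dd y\leq\int_{B_\Lambda}P_{R_j}|\nabla\Phi_j|^2\dd y\leq C\sup_i\|\nabla\widetilde Y_i\|_{L^\infty}^2\int_{B_\Lambda}P_{R_j}\dd y,
\]
and the right-hand side is uniformly bounded by \cref{prop:weight-convergence}. Alternatively, I can test the equation with $u_j-\Phi_j\in H^1_0$ and apply Cauchy--Schwarz, which gives the same bound. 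By \cref{eq:conformal-lower-bound}, $P_{R_j}\geq c_\Lambda^{n-2}$ on $B_\Lambda$ for large $j$, so $\|\nabla u_j\|_{L^2(B_\Lambda)}\leq C$.

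To control the $L^2$ norm, I use Poincar\'e's inequality for $u_j-\Phi_j\in H^1_0(B_\Lambda)$, which yields $\|u_j\|_{L^2(B_\Lambda)}\leq C$. This step does not even need the averaged assumption, although that assumption is consistent with the conclusion. Hence $u_j$ is bounded in $H^1(B_\Lambda)$, and by Banach--Alaoglu a subsequence converges weakly to some $u_\infty$ in $H^1(B_\Lambda)$. On each $B_{\Lambda'}$ this limit agrees with the one produced by \cref{thm:weighted-G-compactness}.

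To identify the trace, I observe that $u_j-\Phi_j\in H^1_0(B_\Lambda)$, a weakly closed subspace. Since $u_j-\Phi_j\rightharpoonup u_\infty-\Phi_\infty$, the limit also lies in $H^1_0$. Equivalently, the trace operator $H^1(B_\Lambda)\to L^2(\partial B_\Lambda)$ is compact, so the traces converge strongly. Therefore $u_\infty|_{\partial B_\Lambda}=\Phi_\infty|_{\partial B_\Lambda}=\sum_i c_{i,\infty}Y_i$.

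The main obstacle is the uniform energy bound up to the boundary. It requires that $P_{R_j}$ be uniformly bounded below on all of $B_\Lambda$, which \cref{eq:conformal-lower-bound} provides, and that $\int P_{R_j}$ be uniformly bounded, which \cref{prop:weight-convergence} provides. Choosing the extensions supported away from the origin avoids any issue at the vertex where $P_\infty$ degenerates.
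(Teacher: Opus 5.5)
Your proposal is correct and follows essentially the same route as the paper. Both arguments extend the boundary data smoothly and test the equation with $u_j-\Phi_j\in H^1_0(B_\Lambda)$ to get a uniform weighted energy bound, then use the uniform lower bound and uniform $L^1$ bound on $P_{R_j}$ to obtain an $H^1(B_\Lambda)$ bound, and finally identify the trace of the weak limit. The differences are cosmetic. You make the Poincar\'e step for $u_j-\Phi_j$ explicit, where the paper leaves it implicit. You identify the trace through weak closedness of $H^1_0$, whereas the paper uses weak continuity of the trace map together with $L^2(\partial B_\Lambda)$ convergence of the boundary data.
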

\begin{proof}
Since $c_{i,j}$ are uniformly bounded, a diagonal argument yields that, up to a subsequence, 
\[
    c_{i,j}\to c_{i,\infty}\quad 
    \text{for every }i=1,\ldots,k.
\]
Then it is immediate  to see that
\begin{equation}\label{eq. L2 convergence on boundary}
    u_j\to \sum_{i=1}^k c_{i,\infty}Y_i
    \quad
    \text{in } L^{2}(\partial B_{\Lambda}).
\end{equation}

With a slight abuse of notation, we smoothly extend $Y_i$ into $B_\Lambda$ and still denote them by $Y_i,\ i=0,\dots,k$.
Consider
\[
     X_j:=\sum_{i=1}^k c_{i,j}Y_i .
\]
It follows from the uniform boundedness of $c_{i,j}$ that
\begin{equation}\label{ineq. C1 bound of boundary value}
    \|X_j\|_{C^1(\overline{B_\Lambda})}
    \leq C.
\end{equation}

We test the weak equation \cref{eq:uj-equation boundary value} with $w_j:=u_j-X_j\in H^1_0(B_\Lambda)$ and  get
\begin{align*}
    \int_{B_\Lambda} P_{R_j}
    |\nabla w_j|^2\dd x
    =&-\int_{B_\Lambda} P_{R_j}
    \nabla X_j\cdot \nabla w_j\dd x \\
    \leq &
    \left(
        \int_{B_\Lambda} P_{R_j}
        |\nabla X_j|^2\dd x
    \right)^{1/2}
    \left(
        \int_{B_\Lambda} P_{R_j}
        |\nabla w_j|^2\dd x
    \right)^{1/2}.
\end{align*}
Therefore,
\[
    \int_{B_\Lambda} P_{R_j}
    |\nabla w_j|^2\dd x
    \le
    \int_{B_\Lambda} P_{R_j}
    |\nabla X_j|^2\dd x .
\]
Combining with \cref{ineq. C1 bound of boundary value}, \cref{eq:conformal-lower-bound} and \cref{prop:weight-convergence}, it follows that
\[
    \int_{B_\Lambda} |\nabla w_j|^2\dd x
    \leq C,
\]
and hence $u_j=w_j+X_j$ are bounded in $H^1(B_\Lambda)$.
Therefore, up to a subsequence, there exists $u_\infty\in H^1(B_\Lambda)$ such that
\[
    u_j\rightharpoonup u_\infty,\quad
    \text{in } H^1(B_\Lambda).
\]
Consider the trace operator
\[
    T:H^1(B_\Lambda)\to L^2(\partial B_\Lambda)
\]
The weak convergence in $H^1(B_\Lambda)$
implies weak convergence of the traces:
\[
    Tu_j\rightharpoonup Tu_\infty
    \qquad
    \text{in }L^2(\partial B_\Lambda).
\]
Combining with \cref{eq. L2 convergence on boundary}, we get
\[
    Tu_\infty
    =\sum_{i=1}^k c_{i,\infty}Y_i
\]
as desired.
\end{proof}
Now we are ready to determine the dimension of the space of polynomial-growth harmonic functions.
\begin{theorem}\label{thm:surjective}
    Let $g=e^{2w}g_0$ be a complete non-flat conformal metric on
$\mathbb R^n$, $n\geq3$, satisfying $\operatorname{Ric}_g\geq0$ and
$\beta\in (0,1)$.  Then, for any positive number $d>0$,
\[
    \dim\mathcal H_d(\mathbb{R}^n,g)
    \geq\dim\mathcal H_d(\mathcal{C}_a)
\]
where $a=\beta^{\frac{1}{n-1}}$ and $\mathcal{C}_a$ is the limiting cone defined in \cref{eq. def of limiting cone}.
\end{theorem}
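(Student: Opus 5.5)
We will construct, for each homogeneous cone harmonic $r^{\sigma_\ell}Y$ with $\ell\leq \bar d-1$ (i.e. $\sigma_\ell\leq ad<N$), a $g$-harmonic function of growth at most $d$. Since these span a space of dimension $\sum_{\ell=0}^{\bar d-1}d_\ell=\dim\mathcal H_d(\mathcal C_a)$ by \cref{cor. dim of harmonic on cone}, it then suffices to show that the constructed functions are linearly independent and lie in $\mathcal H_d(g)$.

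\emph{Construction.} Let $\mathcal W:=\bigoplus_{\ell\leq\bar d-1}\mathcal Y_\ell$, a finite-dimensional space of smooth functions on $\mathbb S^{n-1}$, and set $k:=\dim\mathcal W$. For each $j$ and each $Y\in\mathcal W$, let $h_j^Y$ solve the Dirichlet problem $\Delta_g h=0$ in $B_{R_j}$ (Euclidean ball, $R_j=2^jR_0$) with $h=Y(x/|x|)$ on $\partial B_{R_j}$. This gives a $k$-dimensional space $V_j$; it has dimension $k$ by uniqueness and because the boundary data are independent. The plan is to extract a limit space $V_\infty\subset\mathcal H_d(g)$ with $\dim V_\infty=k$, in the spirit of Lin's argument \cite[Theorem 2.13]{L}. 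Fix a compact reference ball $B_{\rho}$ and choose a basis $h_{1,j},\dots,h_{k,j}$ of $V_j$ that is orthonormal for $\langle\cdot,\cdot\rangle_{\rho}$. We then want the $h_{p,j}$ to converge locally smoothly, by interior elliptic estimates on compact sets, to $g$-harmonic functions $h_{p,\infty}$ on $\mathbb R^n$. Orthonormality on $B_\rho$ passes to the limit, so the limits are independent. Convergence requires local bounds, which follow from the growth control below.

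\emph{Key step: uniform growth.} We need $S_{h}(2^sR)\leq C 2^{2Ns}S_h(R)$ for all $h\in V_j$ and all scales $R_0\leq R\leq R_j$, uniformly in $j$. This is the analogue of \cref{prop:uniform-dyadic-doubling}, but now it must be propagated inward from the outer boundary rather than derived from a global growth assumption. At the top scale $R_j$, blow down by $R_j$: the rescaled functions solve \cref{eq:uj-equation boundary value} with $\Lambda=1$ and bounded coefficients. By \cref{lem. convergence up to bdry} and \cref{thm:weighted-G-compactness}, any limit is the $L_\infty$-harmonic function on $B_1$ with boundary data in $\mathcal W$, namely $\sum c_{\ell}r^{\sigma_\ell}Y_\ell$ with $\sigma_\ell<N$. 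A contradiction argument then gives $S_h(R_j)\leq 2^{2N}S_h(R_j/2)$ for large $j$; one needs the $L^\infty$ bound near $\partial B_1$, which can be obtained from a maximum principle bound $|h|\leq\max|Y|$. From this starting scale, \cref{prop:one-step-doubling} iterates inward to give the doubling bound at all dyadic scales down to $R_0$. Normalizing, this yields uniform bounds of $h_{p,j}$ on each fixed $B_T$, hence convergence. Passing the bound to the limit gives $S_{h_{p,\infty}}(R)\leq CR^{2N}$. We then convert this to a pointwise bound $|h|\leq C|x|^{N}$ via the mean value inequality, as in \cref{eq:mean-value}, and use \cref{thm:distance} to obtain $|h|\leq C r(x)^{N/a+\epsilon}$.

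\emph{Main obstacle.} The hardest step is that $N/a+\epsilon$ must be at most $d$, whereas we only know $N>ad$. We will therefore instead choose $N$ with $\sigma_{\bar d-1}<N<\min(\sigma_{\bar d},ad)$ when $\sigma_{\bar d-1}<ad$, which gives growth degree less than $d$. The borderline case $\sigma_{\bar d-1}=ad$ needs a sharper argument: there we would apply the blow-down to the top homogeneous component to show $S_h(R)\leq CR^{2\sigma_{\bar d-1}+o(1)}$, and we expect that controlling this $o(1)$ against the exact degree $d$ will be the most delicate point, possibly requiring the refined radial estimate \cref{eq:asympradial}.
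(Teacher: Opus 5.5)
Your construction is essentially the paper's own argument. You solve Dirichlet problems on $B_{R_j}$ with boundary data in $\mathcal W=\bigoplus_{\ell\le\bar d-1}\mathcal Y_\ell$. A blow-down contradiction at the top scale gives $S_h(R_j)\le 2^{2N}S_h(R_j/2)$, which \cref{prop:one-step-doubling} propagates inward. Local convergence then follows, with independence preserved by orthonormality on a fixed ball; take the reference radius $\rho\ge R_0$ so the normalization meets the doubling chain.

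The paper instead adds functions one at a time, imposing orthogonality on $B_{R_0}$ to the previously built $h_1,\dots,h_k$. Your simultaneous orthonormal basis of $V_j$ is equivalent. Your normalization by unit boundary data plus the maximum principle $|h|\le\max|Y|$ is cleaner than the paper's Step~2. It makes the rescaled boundary coefficients bounded for free. It also supplies the $L^\infty$ bound on all of $B_1$ needed to pass $\operatorname{Av}_{R_j,1}$ to the limit at $r=\Lambda=1$, which \cref{thm:weighted-G-compactness} only provides for $r<\Lambda$.

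The genuine gap is the threshold case $ad=\sigma_{\bar d-1}$, i.e.\ $d=\sigma_\ell/a$ for some $\ell\ge1$, which you flag but do not close. There $\dim\mathcal H_d(\mathcal C_a)$ counts the modes $|y|^{\sigma_{\bar d-1}}Y$ with $Y\in\mathcal Y_{\bar d-1}$, whose growth in the cone distance is exactly of degree $d$. So you need $g$-harmonic functions with $|h|\le C(1+r^d)$ exactly, but every estimate in your scheme loses a factor $R^{o(1)}$:
\begin{itemize}
\item the top-scale contradiction and \cref{prop:one-step-doubling} both require $N>\sigma_{\bar d-1}$ strictly;
\item \cref{thm:distance} controls only $\ln r/\ln|x|$.
\end{itemize}
In that case your argument proves only $\dim\mathcal H_{d'}(g)\ge\dim\mathcal H_d(\mathcal C_a)$ for every $d'>d$. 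This does not imply the statement, since $\bigcap_{d'>d}\mathcal H_{d'}(g)$ could be strictly larger than $\mathcal H_d(g)$ (think of growth like $r^d\log r$). Closing it would require a loss-free input, for example $\sup_{B_R}|h|\le CR^{\sigma_{\bar d-1}}$ together with $r(x)\ge c|x|^a$. Neither follows from the results available in the paper.

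The paper's Step~4 does not supply this missing idea either. It asserts $v_\infty\in\mathcal H_{N/a}$ and then says that going through the proof of \cref{thm:injective} gives $v_\infty\in\mathcal H_d$. That proof only yields $S_h(R)\le C_NR^{2N}$ for each $N>\sigma_{\bar d-1}$, hence $|h|\le C_\epsilon r^{d+\epsilon}$. So your proposal establishes in effect what the paper's written argument establishes: the statement whenever $ad>\sigma_{\bar d-1}$, by taking $\sigma_{\bar d-1}<N<ad$ as you do. Your diagnosis that the threshold case needs an additional idea is correct.
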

\begin{proof}
    Let $\bar{d}$ be the integer defined in \cref{lem:spectral-threshold}.
    For brevity, denote $\dim\mathcal H_{\bar{d}-1}(g_0)$ by $D$. We will inductively construct linearly independent harmonic functions $h_1,\dots, h_D\in \mathcal H_d(\mathbb{R}^n,g)$, so that
    $\dim\mathcal H_d(\mathbb{R}^n,g)
    \geq D$.

    {\bf Step 1:} First, we let $h_1\equiv 1\in \mathcal H_d(\mathbb{R}^n,g)$. Suppose that we have constructed linearly independent harmonic functions $h_1,\dots, h_k$, where $k\leq D-1$. For clarity, we relabel the first $D$ eigen-exponents $\sigma_\ell$ \cref{eq:sigma-explicit} with multiplicity and the corresponding spherical harmonics $Y_{\ell,i}$ as
    \[
        0=\mu_1<\mu_2\leq \mu_3\leq \cdots\leq \mu_D=\sigma_{\bar{d}-1},\quad 
        1\equiv Y_1,  Y_2,Y_3,\dots, Y_D.
    \]
    The spherical harmonics $Y_i$ are ordered such that $|x|^{\mu_i}Y_i$ is a harmonic function on the limiting cone $\mathcal{C}_a$.

Fix a positive number $N$ such that 
\begin{equation}\label{eq. choice of N in construction}
    \mu_D=\sigma_{\bar{d}-1}<N<\sigma_{\bar{d}},
\end{equation}
and let $R_0=R_0(N,g)$ be the constant in \cref{prop:one-step-doubling}.
Define $R_j:=2^jR_0$.
Let $v_j\in C^{\infty}(B_{R_j}(0))$ solve the $g$-harmonic equation
\[
    \begin{cases}
        \operatorname{div}
        (e^{(n-2)w}\nabla v_j)=0
        &\text{in }B_{R_j}(0),\\
        v_j=\sum_{i=1}^{k+1} a_{i,j} R_j^{\mu_i}Y_i
        &\text{on }\partial B_{R_j}(0),
    \end{cases}
\]
where $a_{1,j},\dots, a_{k+1,j}$ are constants such that
\begin{equation}\label{eq. choice of a ij}
    \frac{1}
    {\operatorname{Vol}_g(B_{R_0}(0))}
    \int_{B_{R_0}(0)}v_j^2\dd vol_g=1,\quad 
    \frac{1}
    {\operatorname{Vol}_g(B_{R_0}(0))}
    \int_{B_{R_0}(0)}v_jh_i\dd vol_g=0,\quad 
    \forall i=1,2,\dots, k.
\end{equation}
Consider the blow-down sequence 
\[
    u_j(y):=\frac{v_j(R_jy)}{S_{v_j}(R_j)^{\frac{1}{2}}},\quad 
    j\geq 1.
\]
It satisfies $\operatorname{Av}_{R_j,1}(u_j^2)=1$ and  solves
\begin{equation}\label{eq. boundary value eq for uj}
    \begin{cases}
        \operatorname{div}
        (P_{R_j}\nabla u_j)=0
        &\text{in }B_{1}(0),\\
        u_j=\sum_{i=1}^{k+1}
        \frac{a_{i,j} R_j^{\mu_i}}{S_{v_j}(R_j)^{\frac{1}{2}}}Y_i
        &\text{on }\partial B_{1}(0).
    \end{cases}
\end{equation}

{\bf Step 2:} 
We claim that the constants 
\[
    c_{i,j}:=\frac{a_{i,j} R_j^{\mu_i}}{S_{v_j}(R_j)^{\frac{1}{2}}}
\]
are uniformly bounded as $j\to+\infty$. Otherwise, 
\[
    M_j:=\frac{\sqrt{\sum_{i=1}^{k+1} a_{i,j}^2R_j^{2\mu_i}}}
    {S_{v_j}(R_j)^{\frac{1}{2}}}\to +\infty,
    \quad\text{as }j\to+\infty.
\]
Then we have
\[
    \begin{cases}
        \operatorname{div}
        \left(
        P_{R_j}\nabla \frac{u_j}{M_j}
        \right)=0
        &\text{in }B_{1}(0),\\
        \frac{u_j}{M_j}=\sum_{i=1}^{k+1}
        \frac{a_{i,j} R_j^{\mu_i}}
        {\sqrt{\sum_{i=1}^{k+1} a_{i,j}^2R_j^{2\mu_i}}}Y_i
        &\text{on }\partial B_{1}(0),
    \end{cases}
\]
with $\operatorname{Av}_{R_j,1}\left(
(\frac{u_j}{M_j})^2\right)=\frac{1}{M_j^2}\to 0.$ 
On the one hand, by \cref{thm:weighted-G-compactness} and the same argument as in the proof of \cref{prop:one-step-doubling}, there exists a function $u_\infty$ on $B_1(0)$ such that $\frac{u_j}{M_j}\rightharpoonup \tilde u_\infty$ in $H^1(B_r(0))$ for any $r<1$, and $\operatorname{Av}_{\infty,1}(\tilde u_\infty^2)
    =0.$ Therefore,
\[
    \tilde u_\infty\equiv 0\quad \text{a.e. in }
    B_1(0).
\]
 On the other hand, by \cref{lem. convergence up to bdry}, $\frac{u_j}{M_j}\rightharpoonup \tilde u_\infty$ in $H^1(B_1(0))$ and 
\[
    \tilde u_\infty=\sum_{i=1}^{k+1} c_i Y_i\quad 
    \text{on }\partial B_1(0)
\]
for some constants $c_1,\dots, c_{k+1}$ with $c_1^2+\cdots+c_{k+1}^2=1$. By the Sobolev trace inequality, this is impossible and the proof of the claim finishes.

{\bf Step 3:} Applying \cref{thm:weighted-G-compactness} and \cref{lem. convergence up to bdry} to \cref{eq. boundary value eq for uj}, we know that there exists $u_\infty\in H^1(B_1(0))$ such that
\[
    L_{\infty}u_\infty=0\quad 
    \text{in } B_1(0),
\]
and 
\[
    u_{\infty}
    =\sum_{i=1}^{k+1} c_{i,\infty}Y_i
    \quad 
    \text{on }\partial B_1(0)
\]
for some constants $c_{1,\infty},\dots, c_{k,\infty}$. Therefore, by \cref{prop:cone-homogeneous-expansion}, 
\[
    u_\infty(y)
    =\sum_{i=1}^{k+1}
    c_{i,\infty}|y|^{\mu_i}
    Y_i\left(\frac{y}{|y|}\right) \quad 
    \text{in }B_1(0).
\]
Then a direct calculation yields
\begin{align*}
    \operatorname{Av}_{\infty,1/2}(u_\infty^2)
    =&\frac{\int_{B_{1/2}(0)}u_\infty^2(y)
    |y|^{-(1-a)n}\dd y}
    {\int_{B_{1/2}(0)}|y|^{-(1-a)n}\dd y}
    =\sum_{i=1}^{k+1}\frac{an}{an+2\mu_i}
    |c_{i,\infty}|^2
    (\frac{1}{2})^{2\mu_i}\notag\\
    \geq& \sum_{i=1}^{k+1}\frac{an}{an+2\mu_i}
    |c_{i,\infty}|^2
    (\frac{1}{2})^{2\mu_D}
    =2^{-2\mu_D}
    \operatorname{Av}_{\infty,1}(u_\infty^2).
\end{align*}
Noticing $\mu_D=\sigma_{\bar{d}-1}$ and the choice of $N$ \cref{eq. choice of N in construction}, the above inequality implies
\[
    \operatorname{Av}_{\infty,1}(u_\infty^2)
    \leq 2^{2N} 
    \operatorname{Av}_{\infty,1/2}(u_\infty^2).
\]
Therefore, by \cref{thm:weighted-G-compactness}, for all sufficiently large $j$, there holds
\[
    1=\operatorname{Av}_{R_j,1}(u_j^2)
    \leq 2^{2N} 
    \operatorname{Av}_{R_j,1/2}(u_j^2).
\]
By \cref{eq:exact-scaling-Sh}, this could be rewritten as
\[
    S_{v_j}(R_j)
    \leq 2^{2N}S_{v_j}
    \left(\frac{1}{2}R_j\right).
\]
Therefore, by \cref{prop:one-step-doubling} and the normalization \cref{eq. choice of a ij}, we have
\[
    S_{v_j}(2^sR_0)
    \leq 2^{2Ns}S_{v_j}(R_0)=2^{2Ns},\quad 
    \forall s=0,\dots, j.
\]
Consequently, $\{v_j\}$ are a sequence of $g$-harmonic functions on the exhaustion $B_{R_j}(0)$, with a uniform upper bound of $L^2$ norm:
\begin{equation}\label{ineq. uniform upper bound L2 norm}
    \frac{1}
    {\operatorname{Vol}_g(B_{2^sR_0}(0))}
    \int_{B_{2^sR_0}(0)}v_j^2\dd vol_g
    \leq 2^{2Ns},\quad 
    \forall j\geq s.
\end{equation}
{\bf Step 4:} 
By the mean-value inequality \cite[Theorem 2.1]{LS} and the gradient estimate for harmonic functions (see \cite[Theorem 1.1]{L2}), $v_j$ converges uniformly on compact sets of $(\mathbb{R}^n,g)$ to a $g$-harmonic function $v_\infty$ (see also \cite[Lemma 2.1]{L2}). 

The normalization \cref{eq. choice of a ij} yields that $v_\infty$ is linearly independent of $h_1,\dots, h_k$.
Moreover, $v_\infty\in \mathcal H_{\frac{N}{a}}(\mathbb{R}^n,g)$ by virtue of \cref{ineq. uniform upper bound L2 norm} and the distance comparison \cref{thm:distance}. Going through the proof of \cref{thm:injective} shows that $v_\infty\in \mathcal H_{d}(\mathbb{R}^n,g)$. Therefore, we could choose $h_{k+1}$ as $v_\infty$ and the proof finishes.
\end{proof}

\section{Completion of the proof of Theorem\ref{thm:A}}
\label{sec:completion-lcf}

We now pass from the globally conformal case treated in the preceding
sections to an arbitrary complete locally conformally flat manifold with nonnegative Ricci curvature.  The remaining cases are elementary. We recall that the classification theorems of Zhu \cite[Theorem 1]{Z} and of
Carron--Herzlich \cite[Theorem A]{CH} imply that a connected complete locally conformally flat
manifold $(M^n,g)$, $n\geq3$, with $\operatorname{Ric}_g\geq0$ belongs to
one of the following classes:
\begin{enumerate}
\item $M$ is non-flat and globally conformally equivalent to $\mathbb R^n$;
\item $M$ is globally conformally equivalent to a spherical space form;
\item $M$ is locally isometric to the round cylinder
      $\mathbb R\times\mathbb S^{n-1}$;
\item $M$ is a complete flat manifold.
\end{enumerate}

We shall discuss case by case and thereby complete the proof of Theorem~\ref{thm:A}. For reader's convenience and simplicity, we recall it here with a statement only for integer growth rate $d$. We refer the reader to the Remark ~\ref{rem:noninteger} below for non-integer growth rate. 
\begin{theorem}
\label{thm:main-lcf-comparison}
Let $(M^n,g)$, $n\geq3$, be a connected complete locally conformally flat
manifold satisfying $\operatorname{Ric}_g\geq0$.  Then, for every integer
$d\geq0$,
\begin{equation}\label{eq:main-lcf-comparison}
    h_d(M)\leq h_d(\mathbb R^n).
\end{equation}
Moreover, if equality holds for some integer $d\geq1$, then $(M,g)$ is
isometric to the Euclidean space. 
\end{theorem}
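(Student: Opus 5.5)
We argue case by case along the Zhu/Carron--Herzlich classification recalled above. The case $d=0$ is Yau's Liouville theorem ($h_0=1$ for every complete manifold with $\operatorname{Ric}\ge0$), so fix an integer $d\ge1$.

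\textbf{Case (1): $M$ is non-flat and globally conformal to $\mathbb R^n$.} We write $g=e^{2w}g_0$.
\begin{itemize}
\item If $\beta=0$, Theorem~\ref{thm: T1} gives $h_d(M)=1<h_d(\mathbb R^n)$.
\item If $\beta>0$, then $\beta=a^{n-1}$ by \cref{eq. volume-ratio formula}. The value $m=0$ forces $\beta=1$, and Bishop--Gromov rigidity then makes $M$ isometric to $\mathbb R^n$, contradicting non-flatness. Hence $a\in(0,1)$, and Theorem~\ref{thm:injective} yields
\[
h_d(M)\le h_{d-1}(\mathbb R^n)<h_d(\mathbb R^n).
\]
\end{itemize}
In both subcases the inequality is strict, so equality never occurs here.

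\textbf{Case (2): $M$ is conformal to a spherical space form.} If $M$ is compact, only constants are harmonic, so $h_d(M)=1$. Since a spherical space form is compact and we need $M$ complete noncompact, the noncompact sub-case arises only as $\mathbb S^n$ minus a point, i.e.\ conformal to $\mathbb R^n$. That sub-case is already covered by case (1), or by case (4) if the metric is flat.

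\textbf{Case (3): $M$ is locally isometric to $\mathbb R\times\mathbb S^{n-1}$.} The universal cover splits as $\mathbb R\times\mathbb S^{n-1}$, so $M$ is either compact or a quotient with at most two ends. We lift $u\in\mathcal H_d(M)$ to the cover and expand it in spherical harmonics on $\mathbb S^{n-1}$. Each mode satisfies $v''=\lambda_\ell v$, so the modes with $\ell\ge1$ grow exponentially and must vanish. The $\ell=0$ mode is affine in $t$. Hence
\[
h_d(M)\le 2<n+1\le h_d(\mathbb R^n).
\]
To apply the growth bound on the cover we use that the covering map is distance nonincreasing, so polynomial growth is preserved on lifts. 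In the noncompact quotient case the distance on $M$ is comparable to $|t|$.

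\textbf{Case (4): $M$ is flat.} This is the main step. By Bieberbach's theorem, $M=\mathbb R^n/\Gamma$, and $M$ is isometric to a flat vector bundle over a compact flat manifold: $M\cong (\mathbb R^k\times T)/\Gamma'$, with $T$ a compact flat $(n-k)$-manifold (soul). We lift $u\in\mathcal H_d(M)$ to $\mathbb R^n$. Distance on $M$ is at most Euclidean distance up to a constant, so the lift is a harmonic polynomial of degree $\le d$ that is $\Gamma$-invariant. This gives an injection
\[
\mathcal H_d(M)\hookrightarrow \mathcal H_d(\mathbb R^n)^{\Gamma},
\]
so $h_d(M)\le h_d(\mathbb R^n)$.

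For rigidity, suppose equality holds with $d\ge1$. Then every harmonic polynomial of degree $\le d$, in particular every linear function, is $\Gamma$-invariant. Invariance of all coordinates $x_i$ under an isometry $x\mapsto Ax+b$ forces $A=I$ and $b=0$. So $\Gamma$ is trivial and $M=\mathbb R^n$.

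We expect the main obstacle to be making the lifting argument rigorous. The lift must genuinely have polynomial growth in the Euclidean distance, which follows from $d_M(\pi x,\pi 0)\le|x|$. We must also check that the invariance of linear functions forces $\Gamma$ to be trivial, which uses the linear part of each element of $\Gamma$. Both are elementary.
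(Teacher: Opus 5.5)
Your proof is correct. Its skeleton matches the paper's: the same four-way split along the Zhu/Carron--Herzlich classification. Case (1) is handled exactly as in the paper. When $\beta=0$ you use Theorem~\ref{thm: T1}. When $\beta>0$, Bishop--Gromov rigidity excludes $m=0$, so $a\in(0,1)$, and Theorem~\ref{thm:injective} gives $h_d(M)\le h_{d-1}(\mathbb R^n)<h_d(\mathbb R^n)$.

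Where you genuinely diverge is in the cylindrical and flat cases. The paper analyses the deck group. For the cylinder it shows that $p(\Gamma)\subset\operatorname{Isom}(\mathbb R)$ is trivial or $\mathbb Z_2$ and computes $h_d(M)$ exactly. In the flat case it uses the soul theorem and Bieberbach's theorem to obtain $\mathcal H_d(M)\cong\bigoplus_{\ell\le D}\mathcal Y_\ell(\mathbb R^{n-k})^K$, and reads off rigidity from $k=0$, $K=\{e\}$.

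You use only one fact: since $\pi$ is $1$-Lipschitz, $u\mapsto u\circ\pi$ injects $\mathcal H_d(M)$ into the $\Gamma$-invariant part of $\mathcal H_d$ of the universal cover. This immediately gives $h_d(M)\le 2<n+1$ for cylinder quotients. For flat $M$ it gives $h_d(M)\le\dim\mathcal H_d(\mathbb R^n)^\Gamma\le h_d(\mathbb R^n)$. In the equality case every coordinate function $x_i$ is $\Gamma$-invariant, which forces each $x\mapsto Ax+b$ in $\Gamma$ to be the identity.

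Your route is shorter and fully sufficient for both the inequality and the rigidity. The paper's route costs more, but it yields the exact value of $h_d(M)$ in every class (e.g.\ $h_d=1$ for cylinder quotients containing a reflection), which is what the abstract advertises.

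Two slips should be cleaned up, though neither affects the argument.

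In case (2) there is no noncompact sub-case to discuss. Class (2) manifolds are compact, and the theorem does not assume noncompactness.

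The structural claim $M\cong(\mathbb R^k\times T)/\Gamma'$ in case (4) is not accurate in general, because the normal holonomy can be infinite. An example is $\mathbb R^3/\langle (x,y)\mapsto(x+1,R_\theta y)\rangle$ with $\theta/\pi\notin\mathbb Q$. Your argument never uses this claim, so it should simply be deleted.
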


\begin{remark} \label{rem:noninteger}
By \cref{thm:injective}, it is easy to see that  \cref{eq:main-lcf-comparison} also holds for non-integer $d$.

However, one cannot expect the rigidity. In fact, by \cref{thm:surjective}, 
 for every $d\in(1,2)$, there exists $a\in(0,1)$ such that $\bar{d}=2$, so that the corresponding conformal metric $g$ on $\mathbb{R}^n$ is non-flat and satisfies 
\[
\operatorname{Ric}_g\geq 0,
\qquad
\beta_g>0,
\]
and
\[
\dim \mathcal H_d(\mathbb R^n,g)
=\dim \mathcal{H}_{d}(\mathcal{C}_a)=
\dim \mathcal H_{\bar{d}-1}(\mathbb R^n,g_0)
=
n+1.
\]
\end{remark}

\subsection{The globally conformal non-flat case}
\ \\
Suppose first that $M$ belongs to class~(1). Thus
\[
    (M^n,g)\cong (\mathbb R^n,e^{2w}g_0)
\]
with $g$ non-flat.  Let $\beta$ denote its asymptotic volume ratio.

If $\beta=0$, Theorem \ref{thm: T1} gives
\[
    \mathcal H_d(M)=\mathbb R
\]
for every finite $d$. Hence
\[    h_d(M)=1\leq h_d(\mathbb R^n),\quad 
    \forall d\geq 0,
\]
and the inequality is strict whenever $d\geq1$.

If $\beta\in(0,1)$, Theorem \ref{thm:T2} gives, for every
integer $d\geq1$,
\[
    h_d(M)
    \leq h_{d-1}(\mathbb R^n)
    < h_d(\mathbb R^n).
\]
For $d=0$, both spaces consist of constants. Thus the theorem holds in
this class, and equality for a positive degree is impossible.

\subsection{The spherical case}
\ \\
Suppose that $M$ belongs to class~(2). A spherical space form is compact,
and global conformal equivalence preserves compactness.  Hence $M$ is
compact.  Every harmonic function on a connected compact manifold is
constant, so
\[
    \mathcal H_d(M)=\mathbb R
    \qquad\text{for every }d\geq0.
\]
Consequently,
\[
    h_d(M)=1\leq h_d(\mathbb R^n),
\]
with strict inequality for every $d\geq1$.

For the remaining two cases, the universal cover of $M$ is either isometric to the standard cylinder $C=\mathbb{R}\times \mathbb{S}^{n-1}$ or the standard $\mathbb{R}^n$, where one has clear understanding of $\mathcal{H}_d(\widetilde{M})$. The lift of a harmonic function on $M$ to $\widetilde{M}$ preserves the polynomial growth rate and is deck transformation group invariant. Using these facts, one also has  an explicit description for $\mathcal{H}_{d}(M)$.

\subsection{The cylindrical case}
\ \\
We first classify polynomial-growth harmonic functions on the standard cylinder $\mathbb{R}\times \mathbb{S}^{n-1}$.
\begin{lemma}
\label{lem:cylinder-harmonic-functions}
Let $C$ be the standard cylinder with the product metric $dt^2+g_{\mathbb S^{n-1}}$,
then every harmonic function on $C$ having polynomial growth is of the form
\begin{equation}\label{eq:cylinder-affine-form}
    u(t,\theta)=A+Bt.
\end{equation}
In particular,
\[
    \dim\mathcal H_d(C)
    =
    \begin{cases}
        1,&0\leq d<1,\\
        2,&d\geq1.
    \end{cases}
\]
\end{lemma}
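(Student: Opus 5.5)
The plan is to separate variables in spherical harmonics, in the same way as in the proof of \cref{thm:radial}. On $C$ the intrinsic distance from a base point $(0,\theta_0)$ satisfies $d_C((t,\theta),(0,\theta_0))\le |t|+\pi$. Hence a function $u$ of polynomial growth of degree $d$ satisfies $|u(t,\theta)|\le C(1+|t|)^d$ uniformly in $\theta$. Expand
\[
    u(t,\theta)=\sum_{\ell\ge0}\sum_{q=1}^{d_\ell} v_{\ell,q}(t)Y_{\ell,q}(\theta),
    \qquad
    v_{\ell,q}(t)=\int_{\mathbb S^{n-1}}u(t,\theta)Y_{\ell,q}(\theta)\,d\omega.
\]
Each coefficient $v_{\ell,q}$ is smooth in $t$. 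Since $\Delta_C=\partial_t^2+\Delta_{\mathbb S^{n-1}}$, harmonicity of $u$ gives the constant-coefficient ODE
\[
    v_{\ell,q}''=\lambda_\ell v_{\ell,q},\qquad \lambda_\ell=\ell(\ell+n-2).
\]
This follows by differentiating under the integral and integrating by parts on the sphere. The Cauchy--Schwarz inequality on $\mathbb S^{n-1}$ transfers the growth bound to the coefficients: $|v_{\ell,q}(t)|\le C(1+|t|)^d$.

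For $\ell\ge1$ we have $\lambda_\ell>0$, so $v_{\ell,q}=\alpha e^{\sqrt{\lambda_\ell}t}+\gamma e^{-\sqrt{\lambda_\ell}t}$. Polynomial growth as $t\to+\infty$ forces $\alpha=0$, and as $t\to-\infty$ forces $\gamma=0$. So every mode with $\ell\ge1$ vanishes identically. For $\ell=0$ the ODE is $v''=0$, so $v_{0,1}=A+Bt$. Because $u(t,\cdot)\in L^2(\mathbb S^{n-1})$ is determined by its coefficients for each fixed $t$, we get $u(t,\theta)=A+Bt$, which is \cref{eq:cylinder-affine-form}.

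The dimension count then follows at once. Constants always lie in $\mathcal H_d(C)$. The function $t$ grows like the distance, since $|t|\le d_C$ and $d_C\le |t|+\pi$. Therefore $t\in\mathcal H_d(C)$ if and only if $d\ge1$. This gives dimension $1$ for $0\le d<1$ and $2$ for $d\ge1$.

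There is no real obstacle here. The only points that need care are justifying the termwise ODE for the coefficients, which holds because $u$ is smooth and the coefficients are defined by integration against the smooth $Y_{\ell,q}$, and the uniform-in-$\theta$ growth bound, which the distance estimate above provides.
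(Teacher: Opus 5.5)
Your proposal is correct and follows essentially the same argument as the paper: you expand in spherical harmonics, derive the ODE $v''=\lambda_\ell v$ for each mode, use polynomial growth at $t\to\pm\infty$ to kill the exponential modes with $\ell\ge1$, and obtain $A+Bt$ from the $\ell=0$ mode. You also make explicit the distance comparison $|t|\le d_C\le |t|+\pi$, which the paper leaves implicit in the uniform-in-$\theta$ bound and in the dimension count.
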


\begin{proof}
Let $\{Y_{\ell,\alpha}\}$ be an orthonormal basis of spherical harmonics satisfying
\[
    -\Delta_{\mathbb S^{n-1}}Y_{\ell,\alpha}
    =\lambda_\ell Y_{\ell,\alpha},
    \qquad
    \lambda_\ell=\ell(\ell+n-2).
\]
For a harmonic function $u$ on $C$, set
\[
    a_{\ell,\alpha}(t)
    :=\int_{\mathbb S^{n-1}}
      u(t,\theta)Y_{\ell,\alpha}(\theta)\,d\theta.
\]
Since
\[
    \Delta_C=\partial_t^2+\Delta_{\mathbb S^{n-1}},
\]
each coefficient satisfies
\begin{equation}\label{eq:cylinder-mode-ode}
    a_{\ell,\alpha}''-\lambda_\ell a_{\ell,\alpha}=0.
\end{equation}
The polynomial-growth assumption on $u$ implies the same polynomial bound
for every coefficient $a_{\ell,\alpha}$ as $t\to\pm\infty$.

For $\ell=0$, equation \cref{eq:cylinder-mode-ode} gives
\[
    a_{0}''=0,
\]
so $a_0(t)=A+Bt$.  If $\ell\geq1$, then
\[
    a_{\ell,\alpha}(t)
    =c_{\ell,\alpha}^{+}e^{\sqrt{\lambda_\ell}t}
     +c_{\ell,\alpha}^{-}e^{-\sqrt{\lambda_\ell}t}.
\]
Polynomial growth as $t\to+\infty$ forces
$c_{\ell,\alpha}^{+}=0$, while polynomial growth as $t\to-\infty$ forces
$c_{\ell,\alpha}^{-}=0$.  Thus every positive spherical mode vanishes, and
\cref{eq:cylinder-affine-form} follows.  The final dimension assertion is
immediate.
\end{proof}

We then exploit the structure of the deck transformation group to determine
$\mathcal H_d(M)$ precisely. If $M$ is compact, the computation is trivial, so we mainly focus in the case that $M$ is complete noncompact. 

Let $M$ be a complete noncompact
manifold locally isometric to $C$. Since $n\geq3$, $C$ is simply connected and is
therefore the Riemannian universal cover of $M$.  Thus
\[
M=C/\Gamma,
\]
where $\Gamma\subset \operatorname{Isom}(C)$ is a discrete group acting freely and
properly discontinuously on~$C$.

Note the product decomposition of $ C$ is preserved by every
isometry.  Hence
\[
\operatorname{Isom}( C)
=
\operatorname{Isom}(\mathbb R)\times O(n),
\]
and every $\gamma\in\Gamma$ has the form
\[
\gamma(t,\theta)
=
\bigl(\varepsilon_\gamma t+a_\gamma,
A_\gamma\theta\bigr),
\]
where $\varepsilon_\gamma\in\{\pm1\},
a_\gamma\in\mathbb R,\text{ and }
A_\gamma\in O(n)$.

Consider the natural projection
\[
p:\Gamma\longrightarrow\operatorname{Isom}(\mathbb R),
\qquad
p(\gamma)(t)=\varepsilon_\gamma t+a_\gamma.
\]

\begin{proposition}\label{prop:cylindrical-case}
Let $M^n$, $n\geq3$, be a complete noncompact manifold locally
isometric to the round cylinder
$\mathbb R\times\mathbb S^{n-1}$.  Then the image
$p(\Gamma)$ is either trivial or a group of order two generated by a
reflection of $\mathbb R$.  Moreover,
\[
h_d(M)
=
\begin{cases}
1, & 0\leq d<1,\\[1mm]
2, & d\geq1 \ \text{and } p(\Gamma)=\{e\},\\[1mm]
1, & d\geq1 \ \text{and } p(\Gamma)\neq\{e\}.
\end{cases}
\]
\end{proposition}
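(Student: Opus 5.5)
The plan has two parts: first determine the possible images $p(\Gamma)$, then identify $\mathcal H_d(M)$ with the $\Gamma$-invariant elements of $\mathcal H_d(C)$, which \cref{lem:cylinder-harmonic-functions} reduces to affine functions $A+Bt$.

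For the first part, note that $p(\Gamma)$ is a subgroup of $\operatorname{Isom}(\mathbb R)$. Its kernel consists of elements $(t,\theta)\mapsto(t,A_\gamma\theta)$. Each such element fixes every slice $\{t\}\times\mathbb S^{n-1}$ and acts there by an orthogonal map. Since $\Gamma$ is discrete, the kernel is finite.

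We use noncompactness of $M$ as follows. Suppose $p(\Gamma)$ contains a nontrivial translation $t\mapsto t+a$. Then $p(\Gamma)$ contains a translation subgroup, and we claim it must be discrete. Indeed, $\Gamma$ acts properly discontinuously and the $O(n)$-factor is compact, so for any bounded interval only finitely many $\gamma$ move $[0,1]\times\mathbb S^{n-1}$ to an intersecting set. Hence $p(\Gamma)$ is discrete with translation part $\cong\mathbb Z$. A fundamental domain is then contained in a compact set $[0,a]\times\mathbb S^{n-1}$, which forces $M$ to be compact, a contradiction.

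So $p(\Gamma)$ is a discrete subgroup of $\operatorname{Isom}(\mathbb R)$ with no translations. Such a group contains at most one reflection: two distinct reflections compose to a nontrivial translation. Therefore $p(\Gamma)$ is either trivial or of order two, generated by a reflection $t\mapsto -t+a_0$. This is the step I expect to need the most care, namely the passage from properness of the $\Gamma$-action to discreteness of $p(\Gamma)$. I would argue it via compactness of $O(n)$: a sequence of elements with $p(\gamma_k)\to\mathrm{id}$ would have $A_{\gamma_k}$ subconverging, and this contradicts proper discontinuity unless the sequence is eventually constant.

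For the dimension count, harmonic functions on $M$ correspond bijectively to $\Gamma$-invariant harmonic functions on $C$. Since $\Gamma$ acts by isometries, distances from a basepoint on $M$ and on $C$ agree up to a bounded additive constant on lifts; more precisely, $d_M(\pi x,\pi o)=\min_\gamma d_C(x,\gamma o)$. We need the polynomial growth condition to transfer in both directions:
\begin{itemize}
\item Upward: if the lifted function $\tilde u$ has polynomial growth on $C$, then $u$ has polynomial growth on $M$, with the same degree, by choosing the lift $x$ realizing the minimum.
\item Downward: if $u$ has polynomial growth on $M$, then $\tilde u$ has polynomial growth on $C$, since $d_M(\pi x,\pi o)\le d_C(x,o)$.
\end{itemize}
Thus $\mathcal H_d(M)$ is the subspace of $\Gamma$-invariant elements of $\mathcal H_d(C)$.

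By \cref{lem:cylinder-harmonic-functions}, these are the functions $A+Bt$, with $B=0$ forced when $d<1$. Invariance under $\gamma$ requires $A+B(\varepsilon_\gamma t+a_\gamma)=A+Bt$. If $p(\Gamma)$ is trivial, every $\gamma$ has $\varepsilon_\gamma=1$ and $a_\gamma=0$, so this holds automatically and $h_d(M)=2$ for $d\ge1$. If $p(\Gamma)$ contains the reflection $t\mapsto -t+a_0$, invariance forces $-Bt+Ba_0=Bt$ for all $t$, hence $B=0$ and $h_d(M)=1$. For $0\le d<1$ only the constants survive in either case, giving $h_d(M)=1$.
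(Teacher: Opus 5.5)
Your proposal is correct and follows essentially the same route as the paper: exclude translations from $p(\Gamma)$ using noncompactness of $M$, then identify $\mathcal H_d(M)$ with the $\Gamma$-invariant functions $A+Bt$ from Lemma~\ref{lem:cylinder-harmonic-functions} and test invariance against a reflection. Your minor variations are sound: you use the fact that two distinct reflections compose to a nontrivial translation instead of the classification of discrete subgroups of $\operatorname{Isom}(\mathbb R)$, and you check growth transfer in both directions so that the descended $t$ really lies in $\mathcal H_1(M)$. In fact the translation case needs no discreteness at all, since the translates of $[0,|a|]\times\mathbb S^{n-1}$ under the lifts of $t\mapsto t+ka$ already cover $C$.
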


\begin{proof}
We first describe the possible image of the deck transformation group
on the $\mathbb R$-factor.  The subgroup $p(\Gamma)$ is discrete in
$\operatorname{Isom}(\mathbb R)$.  Indeed, if there were a sequence of
distinct elements $\gamma_j\in\Gamma$ such that $p(\gamma_j)$ converged
in $\operatorname{Isom}(\mathbb R)$, then, since $O(n)$ is compact, a
subsequence of $A_{\gamma_j}$ would also converge.  It would follow
that $\gamma_j\gamma_k^{-1}$ can be made arbitrarily close to the
identity for large $j,k$, contradicting the discreteness of $\Gamma$.

Similarly, $F:=\ker p$
is finite, since it is a discrete subgroup of the compact group $O(n)$.
Note that the discrete subgroups of $\operatorname{Isom}(\mathbb R)$ are, up to
conjugation,
\[
\{e\},\qquad
\mathbb Z_2,\qquad
\mathbb Z,\qquad
D_\infty,
\]
where $\mathbb Z_2$ is generated by a reflection and $D_\infty$ is the
infinite dihedral group. Since  the last two groups act cocompactly on
$\mathbb R$, and $\mathbb S^{n-1}$ is compact, if $p(\Gamma)$
contains a nontrivial translation, then
$ C/\Gamma$ would be compact.  Then the noncompactness of $M$ implies
\[
p(\Gamma)=\{e\}
\qquad\text{or}\qquad
p(\Gamma)\cong\mathbb Z_2.
\]

If $p(\Gamma)=\{e\}$, then every deck transformation acts only on the
sphere:
\[
\gamma(t,\theta)=(t,A_\gamma\theta).
\]
Thus
\[
M\cong
\mathbb R\times\bigl(\mathbb S^{n-1}/F\bigr),
\]
where $F\subset O(n)$ is a finite group acting freely on
$\mathbb S^{n-1}$.

If $p(\Gamma)\cong\mathbb Z_2$, then $\Gamma$ contains an element
$\gamma_0$ whose action on the $\mathbb R$-factor is a reflection, 
  which has the form
\[
\gamma_0(t,\theta)=(-t+a,A\theta)
\]
for some $A\in O(n)$.

We now turn to polynomial growth harmonic functions.  Let
$u\in\mathcal H_d(M)$ and let
\[
\widetilde u:=u\circ\pi
\]
be its lift to $C$.  For fixed base points
$\widetilde p\in C$ and $p=\pi(\widetilde p)$, notice
\[
d_M\bigl(p,\pi(\widetilde x)\bigr)
\leq
d_{ C}(\widetilde p,\widetilde x),
\]
so $\widetilde u$ has polynomial growth of degree at most $d$ on
$ C$.  By \cref{lem:cylinder-harmonic-functions}, we have
\begin{equation}\label{eq. harmonic on universal cover}
    \widetilde u(t,\theta)=A+Bt.
\end{equation}
Since $\widetilde u$ is the lift of $u$ via the universal covering, it must satisfy
\[
\widetilde u\circ\gamma=\widetilde u
\qquad\text{for every }\gamma\in\Gamma.
\]

If $p(\Gamma)=\{e\}$, the function $t$ itself is $\Gamma$-invariant
and hence descends to $M$.  Consequently,
\[
\mathcal H_d(M)
=
\begin{cases}
\mathbb R, & 0\leq d<1,\\[1mm]
\operatorname{span}\{1,t\}, & d\geq1,
\end{cases}
\]
where, by abuse of notation, $t$ denotes the descended function on
$M$.

If $p(\Gamma)\cong\mathbb Z_2$, consider a reflection element of $\Gamma$
\[
\gamma_0(t,\theta)=(-t+a,A\theta).
\]
The invariance of \cref{eq. harmonic on universal cover} under $\gamma_0$ gives
 $B=0.$
Hence every polynomial-growth harmonic function on $M$ is constant. Then the formula for $h_d(M)$ is immediate. 
\end{proof}

\subsection{The flat case}
\ \\
Finally we consider the class~(4).  Let $M^n$ be a connected complete flat manifold. then its universal cover is $\mathbb{R}^n$. By the Cheeger-Gromoll soul theorem, $M$ admits a compact
totally geodesic flat soul $\Sigma^k$.  Since the inclusion
$\Sigma \hookrightarrow M$ is a homotopy equivalence, it induces an isomorphism on fundamental groups. Then the inverse image of $\Sigma$ in the universal cover $\mathbb R^n$ is connected and is
therefore an affine subspace $V\cong\mathbb R^k$.  The group of deck transformation 
$\Gamma=\pi_1(M)$ preserves $V$ and acts cocompactly on it. By  virtue of the Bieberbach theorem,
after identifying
\[
\mathbb R^n=V\oplus V^\perp,
\]
each $\gamma\in\Gamma$ takes the form
\[
\gamma(x,y)
=
\bigl(A_\gamma x+a_\gamma,B_\gamma y\bigr),
\qquad
A_\gamma\in O(k),\quad a_{\gamma} \in \mathbb{R}^k, \quad 
B_\gamma\in O(n-k).
\]
Consequently, $M$ is the total space of a flat Riemannian vector
bundle over the compact flat manifold $\Sigma^k=V/\Gamma$.

Let
\[
K:=\overline{\{B_\gamma:\gamma\in\Gamma\}}
\subset O(n-k)
\]
be the closure of the normal holonomy group.  For $\ell\geq0$, denote
by $\mathcal Y_\ell(\mathbb R^{n-k})$ the space of homogeneous harmonic
polynomials of degree $\ell$ on $\mathbb R^{n-k}$.

\begin{proposition}
Let $M^n$ be a connected complete flat manifold and let the notation be
as above.  For every real number $d\geq0$, putting $D=\lfloor d\rfloor$, there is a
natural isomorphism
\[
\mathcal H_d(M)
\cong
\bigoplus_{\ell=0}^{D}
\mathcal Y_\ell(\mathbb R^{n-k})^K,
\]
where the superscript $K$ denotes the $K$-invariant subspace.
Consequently,
\[
h_d(M)
=
\sum_{\ell=0}^{D}
\dim\mathcal Y_\ell(\mathbb R^{n-k})^K
\leq
h_d(\mathbb R^{n-k})
\leq
h_d(\mathbb R^n).
\]
In particular, if $h_d(M)=h_d(\mathbb R^n)$
for some real number $d\geq1$, then $M^n$ is isometric to $\mathbb R^n$.
\end{proposition}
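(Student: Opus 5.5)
The plan is to lift everything to the universal cover $\mathbb R^n=V\oplus V^\perp$ and then read off the $\Gamma$-invariant polynomial-growth harmonic functions.

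\textbf{Step 1: lift and expand.} Let $u\in\mathcal H_d(M)$ and let $\widetilde u=u\circ\pi$. Since $d_M(p,\pi(\widetilde x))\le |\widetilde x-\widetilde p|$, the lift $\widetilde u$ is a harmonic function on $\mathbb R^n$ of growth at most $d$. Hence $\widetilde u$ is a harmonic polynomial of degree at most $D=\lfloor d\rfloor$, and it is $\Gamma$-invariant.

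\textbf{Step 2: no dependence on $V$.} Write $\widetilde u(x,y)=\sum_\alpha x^\alpha P_\alpha(y)$. Fix $y$. The function $x\mapsto\widetilde u(x,y)$ is invariant under the Bieberbach action $x\mapsto A_\gamma x+a_\gamma$ on $V$, but invariance mixes it with $B_\gamma y$. To handle this, pass to the finite-index translation subgroup $\Gamma_0=\{\gamma: A_\gamma=I\}$ (Bieberbach). Then, after a further finite-index step using that $\{B_\gamma\}$ restricted to $\Gamma_0$ need not be finite, we can show $x\mapsto \widetilde u(x,y)$ is bounded. The way I would do this is to restrict to $|y|\le 1$: since $O(n-k)$ is compact, $\widetilde u(x,y)$ for $x$ in a fundamental domain of the cocompact action on $V$ and $y$ in the unit ball takes all its values on $V\times\{|y|\le1\}$, and the latter set is $\Gamma$-invariant. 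So $\widetilde u$ is bounded on $V\times\{|y|\le1\}$. A polynomial bounded on $V\times\{|y|\le 1\}$ has no $x$-dependence (restrict to lines in $x$ for each fixed $y$). Thus $\widetilde u=P(y)$ is a harmonic polynomial on $\mathbb R^{n-k}$ of degree $\le D$.

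\textbf{Step 3: invariance and decomposition.} $P$ is invariant under each $B_\gamma$, hence under the closure $K$ by continuity. Decompose $P=\sum_{\ell\le D}P_\ell$ into homogeneous harmonic parts. Since $O(n-k)$ preserves degrees, each $P_\ell\in\mathcal Y_\ell(\mathbb R^{n-k})^K$. Conversely, any such $\sum P_\ell$ is $\Gamma$-invariant, has growth $\le d$, and descends to $M$. This gives the isomorphism.

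\textbf{Step 4: the inequalities.} The inequalities follow from $\dim\mathcal Y_\ell^K\le\dim\mathcal Y_\ell$, together with the monotonicity $h_d(\mathbb R^{n-k})\le h_d(\mathbb R^n)$, which holds because harmonic polynomials in fewer variables embed.

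\textbf{Step 5: rigidity.} For rigidity, take $d\ge1$, so $D\ge1$. Equality forces $h_1$-level equality already at $\ell=1$. Since $h_d(\mathbb R^{n-k})<h_d(\mathbb R^n)$ when $k\ge1$ (compare the $\ell=1$ terms, $n-k<n$), equality forces $k=0$. Then $\mathcal Y_1^K=\mathcal Y_1$ means $K$ fixes all linear functions, so $K=\{I\}$. Then $\Gamma$ acts trivially, $\Gamma=\{e\}$, and $M=\mathbb R^n$.

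\textbf{Main obstacle.} The main obstacle is Step 2, the argument that $V$-directions contribute nothing. The compactness/boundedness trick is the first thing I would try, carefully checking that $V\times\{|y|\le1\}$ is $\Gamma$-invariant, which holds since each $B_\gamma$ is orthogonal.
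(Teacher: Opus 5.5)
Your proof is correct. It differs from the paper's mainly in Step 2, the removal of the $V$-dependence.

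\textbf{The paper's route.} It passes to the Bieberbach lattice subgroup $\Lambda\subset\Gamma$ and writes $\widetilde u(x+ja_\lambda,B_\lambda^j y)=\widetilde u(x,y)$. It then chooses $j_i\to\infty$ with $B_\lambda^{j_i}\to I$, using recurrence in the compact group $O(n-k)$. Dividing by the highest power of $j_i$ kills the top coefficient of $\widetilde u$ in each lattice direction $a_\lambda$. Since the $a_\lambda$ span $V$, this gives $\partial_x\widetilde u\equiv 0$.

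\textbf{Your route.} You observe that the slab $V\times\{|y|\le 1\}$ is $\Gamma$-invariant, because each $B_\gamma$ is orthogonal, and that $\Gamma$ acts cocompactly on it. Hence the $\Gamma$-invariant continuous function $\widetilde u$ is bounded there. For each fixed $y$, the polynomial $x\mapsto\widetilde u(x,y)$ is then bounded on $V$, hence constant. So $\nabla_x\widetilde u$ is a polynomial vanishing on an open set, and it vanishes identically.

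\textbf{Comparison.} Your argument is more elementary. It needs only cocompactness of the action on $V$ and the product form $\gamma(x,y)=(A_\gamma x+a_\gamma,B_\gamma y)$. It avoids both the finite-index translation subgroup and the leading-coefficient bookkeeping, which takes some care in the paper because the coefficients depend on $B_\lambda^j y$. In particular, your aside about passing to $\Gamma_0=\{A_\gamma=I\}$ and a ``further finite-index step'' is superfluous. The boundedness argument never uses it, so you can delete it.

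\textbf{Two small points to fill in.}
\begin{itemize}
\item In the converse direction of Step 3, the growth bound for $(x,y)\mapsto P(y)$ on $M$ should be justified by $|y|\le d_M(\pi(x,y),p)$ for $p\in\Sigma$. This holds because $\pi^{-1}(p)\subset V$, and it is the inequality the paper records.
\item In Step 5, your comparison $\dim\mathcal Y_1(\mathbb R^{n-k})=n-k<n$, together with termwise monotonicity in the other degrees, is exactly what the paper's ``the above inequalities imply $n-k=n$, $K=\{e\}$'' leaves implicit. Your version is the more explicit one.
\end{itemize}
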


\begin{proof}
Let
$
\pi:\mathbb R^n\longrightarrow M
$
be the universal covering map and let $u\in\mathcal H_d(M)$. Then its lift
$\widetilde u:=u\circ\pi$
is harmonic on $\mathbb R^n$. 
As discussed in Proposition ~\ref{prop:cylindrical-case} above, the lift $\widetilde u$ has polynomial growth of degree at most $d$, and
\[
\widetilde u\circ\gamma=\widetilde u
\qquad\text{for every }\gamma\in\Gamma.
\]

We claim that $\widetilde u$ is independent of the $V$-variable.
Indeed, since $\Sigma=V/\Gamma$ is a compact flat manifold, Bieberbach's
theorem provides a finite-index subgroup $\Lambda\subset\Gamma$ whose
induced action on $V$ consists of translations by a full lattice in
$V$.  Thus, for $\lambda\in\Lambda$,
\[
\lambda(x,y)
=
(x+a_\lambda,B_\lambda y),
\]
where $B_\lambda\in O(n-k)$ and the vectors $a_\lambda$ span $V$.

Fix such a $\lambda\in \Lambda$.  The $\Gamma$-invariance of $\widetilde u$ gives
\[
\widetilde u(x+ja_\lambda,B_\lambda^j y)
=
\widetilde u(x,y),
\qquad \forall j\in\mathbb Z.
\]
Since $B_\lambda\in O(n-k)$, there exists a sequence
$j_i\to\infty$ such that $B_\lambda^{j_i}\longrightarrow I.$
Suppose for contradiction that $\widetilde u$ had positive polynomial degree in the
$a_\lambda$-direction, then, after dividing the preceding identity by
the highest corresponding power of $j_i$ and letting
$i\to\infty$, its leading coefficient in that direction would have
to vanish.  This implies 
\[
\partial_{a_\lambda}\widetilde u=0,\quad  \forall a_{\lambda}\in V,
\]
from which we conclude that
\[
\widetilde u(x,y)=P(y)
\]
for some harmonic polynomial $P$ on $V^\perp\cong \mathbb R^{n-k}$.
Furthermore, the $\Gamma$-invariance of $\tilde{u}$ becomes
\[
P(B_\gamma y)=P(y),
\qquad \gamma\in\Gamma.
\]
By continuity this is equivalent to
\[
P(By)=P(y),
\qquad \forall B\in K.
\]
Thus $P$ is a $K$-invariant harmonic polynomial on $\mathbb R^{n-k}$.

Conversely, every $K$-invariant harmonic polynomial $P$ on
$\mathbb R^{n-k}$, regarded as a function on
$V\oplus V^\perp$ by
\[
(x,y)\longmapsto P(y),
\]
is $\Gamma$-invariant and hence descends to a harmonic function on
$M$.  Its polynomial degree agrees with its growth degree on $M$.
Indeed, if $\Sigma\subset M$ denotes the soul, then
\[
d_M(\pi(x,y),\Sigma)=|y|,
\]
while, for a fixed $p\in \Sigma$,
\[
|y|
\leq
d_M(\pi(x,y),p)
\leq
|y|+\operatorname{diam}(\Sigma).
\]
It follows that
\[
\mathcal H_d(M)
\cong
\bigoplus_{\ell=0}^{D}
\mathcal Y_\ell(\mathbb R^{n-k})^K.
\]

Taking dimensions gives
\[
h_d(M)
=
\sum_{\ell=0}^{D}
\dim\mathcal Y_\ell(\mathbb R^{n-k})^K
\leq
\sum_{\ell=0}^{D}
\dim\mathcal Y_\ell(\mathbb R^{n-k})
=
h_d(\mathbb R^{n-k})
\leq h_d(\mathbb{R}^n).
\]

Finally, suppose that the equality $h_d(M)=h_d(\mathbb R^n)$
holds for some real number $d\geq1$. Then  the above inequalities imply $n-k=n$, $K=\{e\}$ and
$M \cong \mathbb{R}^n$
isometrically.
\end{proof}


\end{document}